\let\oldsection\section
\renewcommand\section{\setcounter{equation}{0}\oldsection}
\newtheorem{theorem}{Theorem}[section]
\newtheorem{lemma}{Lemma}[section]
\newtheorem{proposition}{Proposition}[section]
\newtheorem{definition}{Definition}[section]
\date{July 17, 2016}
\begin{document}

\title[Primitive equations with horizontal dissipation]{Strong solutions to the 3D primitive equations with only horizontal dissipation: \\ near $H^1$ initial data}

\author{Chongsheng~Cao}
\address[Chongsheng~Cao]{Department of Mathematics, Florida International University, University Park, Miami, FL 33199, USA}
\email{caoc@fiu.edu}

\author{Jinkai~Li}
\address[Jinkai~Li]{Department of Computer Science and Applied Mathematics, Weizmann Institute of Science, Rehovot 76100, Israel}
\email{jklimath@gmail.com}

\author{Edriss~S.~Titi}
\address[Edriss~S.~Titi]{
Department of Mathematics, Texas A\&M University, 3368 TAMU, College Station,
TX 77843-3368, USA. ALSO, Department of Computer Science and Applied Mathematics, Weizmann Institute of Science, Rehovot 76100, Israel.}
\email{titi@math.tamu.edu and edriss.titi@weizmann.ac.il}

\keywords{primitive equations; anisotropic dissipation; planetary oceanic and atmospheric model.}
%\subjclass[2010]{Primary: Secondary: }
\subjclass[2010]{35Q35, 76D03, 86A10.}
%%% ----------------------------------------------------------------------

\begin{abstract}
In this paper, we consider the initial-boundary value problem of
the three-dimensional primitive equations for oceanic and atmospheric
dynamics with only horizontal viscosity and horizontal diffusivity.
We establish the local, in time, well-posedness of strong solutions,
for any initial data $(v_0, T_0)\in H^1$, by using the local, in
space, type energy estimate. We also establish the global
well-posedness of strong solutions for this system, with any
initial data $(v_0, T_0)\in H^1\cap L^\infty$, such that
$\partial_zv_0\in L^m$, for some $m\in(2,\infty)$, by using the
logarithmic type anisotropic Sobolev inequality and a logarithmic
type Gronwall inequality. This paper
improves the previous results obtained
in [Cao,~C.; Li,~J.; Titi,~E.~S.: \emph{Global well-posedness of the 3D primitive equations with only horizontal viscosity and diffusivity}, Comm.~Pure Appl.~Math., \bf69~\rm(2016), 1492--1531.], where the initial data $(v_0, T_0)$ was assumed
to have $H^2$ regularity.
\end{abstract}

%%% ----------------------------------------------------------------------
\maketitle
%%% ----------------------------------------------------------------------
%%%%%%%%%%%%%%%%%%%%%%%%%%%%%%%%%%%%%%%%%%%%%%%%%%%%%%%%%%%%%%%%%%%%%%%%%%
%INTRODUCTION%%%%%%%%%%%%%%%%%%%%%%%%%%%%%%%%%%%%%%%%%%%%%%%%%%%%%%%%%%%%%
%%%%%%%%%%%%%%%%%%%%%%%%%%%%%%%%%%%%%%%%%%%%%%%%%%%%%%%%%%%%%%%%%%%%%%%%%%

%\tableofcontents
\allowdisplaybreaks

\section{Introduction}\label{sec1}
In the context of the large-scale oceanic and atmospheric dynamics,
an important feature is that the vertical scale ($10$\,-\,$20$ kilometers) is much smaller than the horizontal scales (several thousands of kilometers), and therefore, the aspect ratio, i.e.\,the ratio of the depth (or hight) to the horizontal width, is very small.
Due to this fact, by the scale analysis (see, e.g.,
Pedlosky \cite{PED}), or taking the small aspect ratio limit to the Navier-Stokes equations (see Az\'erad--Guill\'en \cite{AZGU}
and Li--Titi \cite{LITITINSPE,LITITISURVEY} for the mathematical justification of this limit), one obtains the primitive
equations. The primitive equations form a fundamental block in models
for planetary oceanic and atmospheric dynamics, and are wildly used in
the models of the weather prediction, see, e.g., the books by
Haltiner--Williams \cite{HAWI}, Lewandowski \cite{LEWAN}, Majda \cite{MAJDA}, Pedlosky \cite{PED},
Vallis \cite{VALLIS}, Washington--Parkinson \cite{WP} and Zeng \cite{ZENG}. Moreover, in the oceanic and atmospheric dynamics, due to the strong horizontal turbulent mixing,
the horizontal viscosity and diffusivity are much stronger than the vertical viscosity and diffusivity, respectively.

In this paper, we consider the following version of the primitive equations for oceanic and atmospheric dynamics, which have
only horizontal
dissipation, i.e.\,with only horizontal viscosity and horizontal
diffusivity
\begin{eqnarray}
&\partial_tv+(v\cdot\nabla_H)v+w\partial_zv+\nabla_Hp-\Delta_H v+f_0k\times
v=0,\label{1.1}\\
&\partial_zp+T=0,\label{1.2}\\
&\nabla_H\cdot v+\partial_zw=0,\label{1.3}\\
&\partial_tT+(v\cdot\nabla_H)T+w\partial_zT-\Delta_HT=0,\label{1.4}
\end{eqnarray}
where the horizontal velocity $v=(v^1,v^2)$, the vertical velocity
$w$, the temperature
$T$ and the pressure $p$ are the unknowns, and $f_0$ is the
Coriolis parameter. The notations $\nabla_H=(\partial_x,\partial_y)$
and $\Delta_H=\partial_x^2+\partial_y^2$ are the horizontal gradient
and the horizontal Laplacian, respectively. Notably, the above system
has been first studied by the authors in 
\cite{CAOLITITI3}, where
the global existence of strong solutions were established, for arbitrary
initial data with $H^2$ regularity; the aim of the present paper is to
relax the conditions on the initial data, without losing the global
well-posedness of strong solutions.

The mathematical studies of the primitive equations were started
by Lions--Temam--Wang \cite{LTW92A,LTW92B,LTW95}
in the 1990s, where among other issues, global existence of
weak solutions was established; however, the uniqueness of
weak solutions is still an open question, even
for the two-dimensional case. Note that this is different from the
incompressible Navier-Stokes equations, as it is well-known that
the weak solutions to the two-dimensional incompressible
Navier-Stokes equations are unique, see, e.g., Constantin--Foias \cite{CONFOINSBOOK}, Ladyzhenskaya \cite{LADYZHENSKAYA}, Temam \cite{TEMNSBOOK} and more recently Bardos et al.\, \cite{BLNNT} for the uniqueness of weak solutions,  within the class of three-dimensional Leray-Hopf weak solutions, with initial data that are functions of only two  spatial variables.
However, we would like to point out that, though the general uniqueness of weak solutions to
the primitive equations is still unknown, some particular cases
have been solved, see \cite{BGMR03,KPRZ,PTZ09,TACHIM,LITITIUNIQ},
and in particular, it is proved in \cite{LITITIUNIQ} that
weak solutions, with bounded initial data, to the primitive equations are unique, as long as the discontinuity of the initial data is
sufficiently small. Remarkably, different from the three-dimensional
Navier-Stokes equations, global existence and uniqueness of strong
solutions to the three-dimensional primitive equations has already
been known since the breakthrough work by Cao--Titi \cite{CAOTITI2}.
This global existence of strong solutions to the primitive equations
were also proved later by Kobelkov \cite{KOB06} and
Kukavica--Ziane\cite{KZ07A,KZ07B}, by using some different approaches,
see also Hieber--Kashiwabara \cite{HIEKAS} and
Hieber--Hussien--Kashiwabara \cite{HIEHUSKAS} for some generalizations
in the $L^p$ settings, and Coti Zelati et al.\, \cite{COTIZELATI} and Guo--Huang \cite{GH1,GH2} for the
primitive equations coupled with the moisture equations.

Recall that in the oceanic and atmospheric dynamics, due to the
strong horizontal turbulent mixing,
the horizontal viscosity and diffusivity are much stronger than
the vertical viscosity and diffusivity, respectively, and the
vertical viscosity and diffusivity are very weak. While in
all the papers mentioned above, the systems considered are
assumed to have both full viscosity and full diffusivity.
These lead to the studies of the primitive equations with
partial viscosity or partial diffusivity, which have been
carried out by Cao--Titi in \cite{CAOTITI3},
and by Cao--Li--Titi in \cite{CAOLITITI1,CAOLITITI2,CAOLITITI3,CAOLITITI4}, and see
also the survey paper by Li--Titi \cite{LITITISURVEY}.
In particular, the results in \cite{CAOLITITI3,CAOLITITI4} show
that the vertical viscosity is not necessary for
the global existence of strong solutions to the primitive equations,
which is consistent with the physical case (the vertical viscosity
for the large scale atmosphere is weak).
However, on the other hand,
the inviscid primitive equation, with or without
coupling to the heat equation has been shown by Cao et al.\,
\cite{CINT} to blow up in finite time (see also Wong \cite{TKW}).
Combining the global existence results in \cite{CAOLITITI3,CAOLITITI4}
and the finite-time blow up results in \cite{CINT,TKW}, one can conclude
that the horizontal viscosity plays an essential role in stabilizing
the flow in the large-scale atmosphere and ocean. This provides
the mathematical evidences that, in the study of the large scale
atmospheric and oceanic dynamics, one can not ignore the eddy viscosity
in the horizontal direction, created by the strong horizontal turbulent
mixing.

We also note that, for the primitive equations with full viscosity
and full diffusivity, the global existence of strong solutions
are established for any initial data in $H^1$ (see \cite{CAOTITI2,KZ07A,KZ07B,KOB06}), while for the primitive
equations with partial viscosity or partial diffusivity,
caused by the loss partial viscosities or partial diffusivity,
the global strong solutions are established for initial data in
$H^2$ (see \cite{CAOLITITI1,CAOLITITI2,CAOLITITI3}) or some space weaker
than $H^2$ but stronger than $H^1$ (see \cite{CAOLITITI4}).
Compared the results for the primitive equations with partial viscosity
or partial diffusivity \cite{CAOLITITI1,CAOLITITI2,CAOLITITI3,CAOLITITI4} and those with
both full viscosity and full diffusivity \cite{CAOTITI2,KZ07A,KZ07B,KOB06},
one may expect to also establish the $H^1$ theory for the primitive
equations with partial viscosity or partial diffusivity, and this
paper is devoted to some studies in this direction.

In this paper, we continue the study of the primitive equations with
both horizontal viscosity and horizontal diffusivity, which has been
studied in \cite{CAOLITITI3}. The aim of this paper is to improve the
results in \cite{CAOLITITI3}, and in
particular, we want to find the initial data
space as weak as possible to guarantee the global existence of strong
solutions to system (\ref{1.1})--(\ref{1.4}). Recalling that it is the
space $H^1$ that the initial data is taken from to establish the global
existence of strong solutions to the primitive equations with full
dissipation, a natural candidate of the initial data spaces is $H^1$
for the global existence of strong solutions to system
(\ref{1.1})--(\ref{1.4}). As it will be shown in this paper, it is the
case for the local well-posedness, in other words, for any initial data
in $H^1$, there is a unique local strong solution to system
(\ref{1.1})--(\ref{1.4}), subject to some appropriate boundary and
initial conditions; however, due to the lack of the vertical viscosity
and the strongly nonlinear term $w\partial_zv$ (which is eventually
quadratic in
$\nabla v$), the merely $H^1$ regularity is not sufficient for us to
obtain the global strong solutions. Nevertheless, we can prove in this
paper that a slightly better space than $H^1$ is sufficient to
guarantee the global existence of strong solutions. More precisely, we
prove that for any initial data $(v_0, T_0)\in H^1\cap L^\infty$, with
$\partial_z v_0\in L^m$, for some $m\in(2,\infty)$, there is a global
strong solution to system (\ref{1.1})--(\ref{1.4}), subject to some
appropriate boundary conditions.

We consider the problem in the domain $\Omega_0=M\times(-h,0)$, with $M=(0,1)\times(0,1)$, and supplement system (\ref{1.1})--(\ref{1.4}) with the following boundary and initial conditions:
\begin{eqnarray}
& v, w\mbox{ and } T \mbox{ are }\mbox{periodic in }x \mbox{ and }y, \mbox{ and of periods }1,\label{BC}\\
&(\partial_zv,w)|_{z=-h,0}=(0,0),\quad T|_{z=-h}=1,\quad T|_{z=0}=0,\label{BC.1}\\
&(v,T)|_{t=0}=(v_0, T_0). \label{IC}
\end{eqnarray}

System
(\ref{1.1})--(\ref{1.4}) defined on $\Omega_0=M\times(-h,0)$, subject
to the boundary and initial conditions (\ref{BC})--(\ref{IC}), is
equivalent to the following system defined on the extended domain $\Omega:=M\times(-h, h)$
(see, e.g., \cite{CAOLITITI1,CAOLITITI2} for the details)
\begin{eqnarray}
&\partial_tv+(v\cdot\nabla_H)v-\left(\int_{-h}^z\nabla_H\cdot
v(x,y,\xi,t)d\xi\right)\partial_zv-\Delta_Hv\nonumber\\
&~~~\qquad+f_0k\times v+\nabla_H\left(p_s(x,y,t)-\int_{-h}^zT(x,y,\xi,t)d\xi\right)=0,\label{1.5}\\
&\int_{-h}^h\nabla_H\cdot v(x,y,\xi,t)d\xi=0,\label{1.6}\\
&\partial_tT+v\cdot\nabla_HT-\left(\int_{-h}^z\nabla_H\cdot
v(x,y,\xi,t)d\xi\right)\left(\partial_zT+\frac{1}{h}\right)-\Delta_HT=0,\label{1.7}
\end{eqnarray}
subject to the following boundary and initial conditions
\begin{eqnarray}
&v\mbox{ and } T \mbox{ are periodic in }x, y, z,\label{1.8}\\
&v\mbox{ and }T\mbox{ are even and odd in }z,\mbox{ respectively}, \label{1.8-1}\\
&(v,T)|_{t=0}=(v_0, T_0). \label{1.9}
\end{eqnarray}
It should be noticed that in (\ref{1.8}), as well as in all the cases of 
periodic boundary conditions below, the periods in $x,y$ are $1$, while
that in $z$ is $2h$.

Throughout this paper, we use $L^q(\Omega), L^q(M)$ and
$W^{m,q}(\Omega), W^{m,q}(M)$ to denote the standard Lebesgue and
Sobolev spaces, respectively. For $q=2$, we use $H^m$ instead of
$W^{m,2}$. For simplicity, we still use the notations $L^p$ and $H^m$
to denote the $N$ product spaces $(L^p)^N$ and $(H^m)^N$, respectively.
We always use $\|u\|_p$ to denote the $L^p$ norm of $u$. We denote by
$\textbf{x}^\text{H}$ a point in $\mathbb R^2$. For $0<r<\infty$, we
use $D_r(\textbf{x}^\text{H})$ to denote an open disk in $\mathbb R^2$ of radius $r$ centered
at $\textbf{x}^\text{H}$. We always use $D_r$ to stand for the disk
centered at the origin, except when otherwise explicity mentioned.

\begin{definition}\label{def1.1}
Given a positive number $\mathcal T$. Let $v_0, T_0\in H^1(\Omega)$ be
two spatially
periodic functions, such that they are even and odd in $z$,
respectively. A pair $(v,T)$ is called a strong solution to system
(\ref{1.5})--(\ref{1.9}) on $\Omega\times(0,\mathcal T)$ if

(i) $v$ and $T$ are spatially periodic, and they are even and odd in $z$, respectively;

(ii) $v$ and $T$ have the following regularity properties
\begin{eqnarray*}
&&v,T\in L^\infty(0,\mathcal T; H^1(\Omega))\cap C([0,\mathcal T];L^2(\Omega)),\\
&&\nabla_Hv,\nabla_HT\in L^2(0,\mathcal T; H^1(\Omega)),\quad\partial_tv,\partial_tT\in L^2(\Omega\times(0,\mathcal T));
\end{eqnarray*}

(iii) $v$ and $T$ satisfy equations (\ref{1.5})--(\ref{1.7}) a.e.\,in $\Omega\times(0,\mathcal T)$ and the initial condition (\ref{1.9}).
\end{definition}

\begin{definition}
The pair $(v,T)$ is called a global strong solution to system (\ref{1.5})--(\ref{1.9}) if it is a strong solution on $\Omega\times(0,\mathcal T)$, for any $\mathcal T\in(0,\infty)$.
\end{definition}

Our main result is concerning the local and global well-posedness of strong solutions to system (\ref{1.5})--(\ref{1.7}), subject to (\ref{1.8})--(\ref{1.9}), as stated in the following:

\begin{theorem}\label{thm1}
Suppose that the periodic functions $v_0,T_0\in H^1(\Omega)$ are
even and odd in $z$, respectively, with $\int_{-h}^h\nabla_H\cdot
v_0(x,y,z)dz=0$, for any $(x,y)\in M$.  Then, there is a unique local,
in time, strong solution $(v,T)$ to system
(\ref{1.5})--(\ref{1.7}), subject to
the boundary and initial conditions (\ref{1.8})--(\ref{1.9}).

Moreover, if we assume in addition that
$$
\partial_zv_0\in L^m(\Omega),\quad (v_0, T_0)\in L^\infty(\Omega),
$$
for some $m\in(2,\infty)$, then the corresponding local strong solution $(v,T)$ can be extended uniquely to be a global one.
\end{theorem}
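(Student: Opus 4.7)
The proof splits naturally into local existence/uniqueness at merely $H^1$ regularity, followed by a chain of a priori estimates that extends the local solution to a global one under the additional hypotheses. Throughout, a Galerkin-type approximation provides sufficiently regular solutions on which all the estimates below are performed rigorously; the final passage to the limit is routine.

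For the local statement, the principal obstruction is that $w\partial_z v$, with $w=-\int_{-h}^z\nabla_H\cdot v\,d\xi$, is essentially quadratic in $\nabla v$ and cannot be absorbed by only horizontal viscosity in a standard global-in-space $H^1$ estimate. I would therefore use a local-in-space energy estimate: fix a smooth horizontal cutoff $\phi=\phi(x,y)$ supported in a disk $D_r(\textbf{x}^\text{H})\subset M$, test (\ref{1.5}) against $-\phi^2\Delta v$ and (\ref{1.7}) against $-\phi^2\Delta T$, and derive a differential inequality for
\[
E_r(t):=\sup_{\textbf{x}^\text{H}\in M}\int_{D_r(\textbf{x}^\text{H})\times(-h,h)}\phi^2\bigl(|\nabla v|^2+|\nabla T|^2\bigr)\,d\textbf{x}.
\]
The most dangerous nonlinear terms each carry a factor such as $\|\phi v\|_{L^\infty_zL^2_{xy}}$ or $\|\phi\nabla v_0\|_{L^2}$, which can be made arbitrarily small by shrinking $r$, thanks to the absolute continuity of the $H^1$-norm on small horizontal sets. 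This smallness closes the estimate on a short time interval whose length depends only on a uniform modulus of continuity of $(v_0,T_0)$ in $H^1$. Uniqueness then follows from a standard $L^2$ estimate on the difference of two solutions.

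For the global extension I would establish, in order: (i) an $L^\infty$ bound on $T$ by the maximum principle applied to (\ref{1.7}); (ii) an $L^\infty$ bound on $v$ by testing (\ref{1.5}) against $|v|^{2q-2}v$, using (i) to handle the hydrostatic pressure term $\nabla_H(p_s-\int T\,d\xi)$, and letting $q\to\infty$; (iii) an $L^m$ bound on $\partial_z v$ obtained by differentiating (\ref{1.5}) in $z$, testing against $|\partial_z v|^{m-2}\partial_z v$, and exploiting the cancellation
\[
\int\partial_z\bigl(w\partial_z v\bigr)\cdot|\partial_z v|^{m-2}\partial_z v\,d\textbf{x}=-\tfrac{m-1}{m}\int(\nabla_H\cdot v)\,|\partial_z v|^m\,d\textbf{x},
\]
which reduces the damaging vertical transport term to one controlled by $\|\nabla_H v\|_\infty\|\partial_z v\|_m^m$; and (iv) an $H^1$ bound on $(v,T)$ obtained from the standard energy estimate, combined with a logarithmic anisotropic Sobolev inequality of the form
\[
\|f\|_\infty\le C\bigl(1+\|\nabla_H f\|_{H^1}\bigr)\bigl[\log(e+\|\partial_z f\|_m)\bigr]^{1/2}
\]
and a logarithmic-type Gronwall lemma.

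The main obstacle will be closing (iii) and (iv) simultaneously: the $L^m$-estimate for $\partial_z v$ contains a term like $\|\nabla_H v\|_\infty\|\partial_z v\|_m^m$, while the $H^1$-estimate itself forces one to estimate $\|\nabla_H v\|_\infty$ and $\|\nabla_H T\|_\infty$ through the logarithmic anisotropic embedding, which in turn requires $\|\partial_z v\|_m$. One must therefore arrange a coupled, double-logarithmic Gronwall inequality in the quantity $\log(e+\|(v,T)\|_{H^1}^2)+\log(e+\|\partial_z v\|_m^m)$ in order to rule out finite-time blow-up. It is precisely the strict inequality $m>2$ (rather than $m=2$) that renders the logarithmic anisotropic Sobolev inequality above available with an absorbable power of $\|\partial_z f\|_m$, so that the whole argument closes on every finite time interval.
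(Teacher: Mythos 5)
Your local-existence strategy is in the same spirit as the paper's: exploit the absolute continuity of the integral to make a local-in-space quantity small initially and keep it small for a short time. Two remarks, neither fatal: the paper localizes only the $L^2$ estimate of $u=\partial_zv$ (testing the equation for $\partial_z v$ against $u\varphi^2$, Proposition \ref{lem3.2}) and then closes the estimates for $\nabla_Hv$, $\nabla T$ and $\partial_t(v,T)$ globally in space, which is simpler than localizing the full $H^1$ energy as you propose; and the approximating solutions are produced by adding a vanishing vertical viscosity $\varepsilon\partial_z^2$ (Proposition \ref{glo}), not by a Galerkin scheme, so that the uniform estimates can be performed on genuinely strong solutions.

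The global part, however, has genuine gaps. First, your step (ii) claims an $L^\infty$ bound on $v$ by Moser iteration; this is not available for this system, because of the nonlocal hydrostatic pressure $\nabla_Hp_s$: what one can prove is only the growth bound $\sup_{q\geq 2}\|v\|_q/\sqrt q\leq C$ (Proposition \ref{prop5.1}), and it is precisely the failure of a genuine $L^\infty$ bound that forces the logarithmic Sobolev inequality and logarithmic Gronwall lemma (if you really had $\|v\|_\infty$ bounded, the rest would close by plain Gronwall and your step (iv) would be superfluous). Second, your treatment of the vertical transport term in the $L^m$ estimate for $\partial_zv$ leaves a factor $\|\nabla_Hv\|_\infty\|\partial_zv\|_m^m$, and in (iv) you also need $\|\nabla_HT\|_\infty$; these sup-norms of first derivatives are not controllable at this regularity level: any anisotropic embedding for $\nabla_Hv$ would require some $L^p$ control of $\partial_z\nabla_Hv$, which is exactly what is missing without vertical viscosity. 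The paper avoids this by integrating by parts once more so that only $\|v\|_\infty$ enters, yielding $\frac{d}{dt}\|u\|_m^m+\int|u|^{m-2}|\nabla_Hu|^2\leq C(1+\|v\|_\infty^2)(1+\|u\|_m^m)$ and an analogous $\nabla_Hv$ inequality (Proposition \ref{prop5.2}), and then bounds $\|v\|_\infty^2\leq C\log(e+B(t))$ via Lemma \ref{logsob}, whose prefactor is the slowly growing $\sup_{r\geq2}\|v\|_r/r^{\lambda}$ while \emph{all} derivative norms sit inside the logarithm. Your stated inequality $\|f\|_\infty\leq C(1+\|\nabla_Hf\|_{H^1})\log^{1/2}(e+\|\partial_zf\|_m)$ has the opposite structure: the dissipative quantity $\|\nabla_Hv\|_{H^1}$ appears linearly outside the log, so inserting it into the energy inequality produces a term comparable to $B(t)$ times a large factor, and no Gronwall argument (logarithmic or otherwise) closes. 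Finally, the restriction $m>2$ is used in the paper through the H\"older/Ladyzhenskaya step in the $\nabla_Hv$ estimate (the exponent $\frac{2r}{r-2}$ and the resulting term $\|u\|_r^{4r/(r-2)}$), not to make the logarithmic embedding available as you suggest.
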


Since we consider the system with only
horizontal dissipation and the initial data is taken to belong only to $H^1$,
the arguments used in \cite{GMR01} (with $H^1$ initial data but for
full
dissipation case) and \cite{CAOLITITI1,CAOLITITI2,CAOLITITI3,CAOLITITI4}
(for partial dissipation case but with $H^2$ initial data) do not apply
here in order to show the short time existence.
Actually, as it will be explained below, applying the standard energy
approach to system (\ref{1.5})--(\ref{1.7}) does not yield the required $H^1$
estimates, even locally in time.
The crucial step to prove the local existence of strong solutions is to
obtain a local in time estimate for
$\partial_zv$. One may try to use the standard energy approach to get
such an estimate; however, by doing that, one will encounter a term
on the right-hand side of the energy inequality which can not be
controlled by the quantities on the left-hand side, unless we have some
additional smallness conditions.
To overcome this difficulty, we employ a local in space type energy
inequality instead of the global in space type.
The key idea of the local in space
type energy inequality is that initially the local in space integral norms can be as small as desired, which is guaranteed
by the absolute continuity of integrals, and we can expect that they
will remain small for a short time. As we will see in Proposition
\ref{lem3.2}, we can successfully achieve the expected estimate for
$\partial_zv$ by using the local in space type energy inequality. Moreover, based
on this estimate, we can obtain other relevant
estimates which are sufficient to
prove the local, in time, existence of strong solutions.

To prove the global existence of strong solutions, we adopt the ideas
employed in \cite{CAOLITITI3,CAOLITITI4}. The key issue is to derive
an $L^\infty(0,\mathcal T; L^2(\Omega))$ estimate for $\partial_zv$ for
any positive finite time $\mathcal T$. To this end, due to the
absence of the vertical
viscosity in the momentum equations, we have to derive some control on
$\|v\|_\infty^2$, which appears as a factor in the
energy inequalities of the form (see Proposition \ref{prop5.2})
$$
\frac{d}{dt}A(t)+B(t)\leq C\|v\|_\infty^2A(t)+\mbox{other terms},
$$
where $A$ involves $\|\partial_zv\|_2$.
The treatment on $\|v\|_\infty^2$ is similar to that in
\cite{CAOLITITI3,CAOLITITI4}. More precisely, thanks to the estimates
on the growth of the $L^q$ norms of $v$ (Proposition \ref{prop5.1})
and the logarithmic type Sobolev embedding inequality
(Lemma \ref{logsob}), such $\|v\|_\infty^2$ can be controlled
by $\log(A(t)+B(t))$, and as a result, by the logarithmic type
Gronwall inequality (Lemma \ref{loggron}), we can obtain the
desired estimate.

The rest of this paper is arranged as follows: in the next section,
section \ref{sec2}, we collect some preliminary results which will be
used in the subsequent sections; in section \ref{sec3}, we prove the
local well-posedness part of Theorem \ref{thm1}, where the local in
space energy inequality will be employed; in section \ref{sec4}, we
prove the global existence part of Theorem \ref{thm1}, where the
logarithmic type anisotropic Sobolev inequality and the logarithmic type
Gronwall inequality will be employed; in the last Appendix
section, we give the
proof of a logarithmic type anisotropic Sobolev inequality,
a generalization of the Brezis-Gallouet-Wainger
inequality \cite{Brezis_Gallouet_1980,Brezis_Wainger_1980}.

\section{Preliminaries}\label{sec2}

In this section, we collect some preliminary results which will be used in the rest of this paper.

\begin{lemma}\label{lad}
Let $S$ be a bounded domain in $\mathbb R^2$, and $\phi,\varphi,\psi$
are functions defined on $S\times(-h,h)$. Denote by $L$ the diameter
of $S$. Then, it holds that
\begin{align}
  &\int_{S}\left(\int_{-h}^h|\phi(x,y,z)|dz\right)\left(\int_{-h}^h|\varphi(x,y,z)\psi
  (x,y,z)|dz\right)dxdy\nonumber\\
  \leq&Ch^{\frac12}\min\Big\{\|\phi\|_2  \|\varphi\|_2^{1/2}\left(\frac{\|\varphi\|_2}{L} +\|\nabla_H\varphi\|_2\right)^{\frac12}
  \|\psi\|_2^{\frac12}\left(\frac{\|\psi\|_2}{L} +\|\nabla_H\psi\|_2\right)^{\frac12},\nonumber\\ &\|\phi\|_2^{1/2}\left(\frac{\|\phi\|_2}{L} +\|\nabla_H\phi\|_2\right)^{\frac12}
  \|\varphi\|_2^{1/2}\left(\frac{\|\varphi\|_2}{L} +\|\nabla_H\varphi\|_2\right)^{\frac12}\label{n2.1}
  \|\psi\|_2 \Big\},
  \end{align}
  \begin{align}
  &\int_{S}\left(\int_{-h}^h|\phi |dz\right)\left(\int_{-h}^h|\varphi \psi
  |dz\right)dxdy\nonumber\\
  \leq&Ch^{\frac56} \|\phi\|_6  \|\varphi\|_2^{\frac23}\left(\frac{\|\varphi\|_2}{L} +\|\nabla_H\varphi\|_2\right)^{\frac13}
  \|\psi\|_2 ,\label{n2.2}
  \end{align}
  and
  \begin{align}
  &\int_{S}\left(\int_{-h}^h|\phi |^2dz\right)\left(\int_{-h}^h|\varphi |^2dz
  \right)dxdy\nonumber\\
  \leq&C\|\phi\|_2\left(\frac{\|\phi\|_2}{L}+\|\nabla_H\phi\|_2\right) \|\varphi\|_2\left(\frac{\|\varphi\|_2}{L}+\|\nabla_H\varphi\|_2\right),
  \label{n2.3}
  \end{align}
where $C$ is a constant depending only on the shape of $S$, but not on
its size.
\end{lemma}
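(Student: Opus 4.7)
The plan is to reduce all three estimates to the two--dimensional Ladyzhenskaya and Gagliardo--Nirenberg inequalities on the horizontal slice $S$. The unifying device is to introduce, for any function $u$ on $S\times(-h,h)$, the auxiliary two--dimensional functions
$$
U(x,y):=\Bigl(\int_{-h}^h|u|^2\,dz\Bigr)^{1/2},\qquad \tilde U(x,y):=\int_{-h}^h|u|\,dz.
$$
Three elementary facts drive the whole proof. First, $\|U\|_{L^2(S)}=\|u\|_{L^2(\Omega)}$, and differentiating under the integral followed by Cauchy--Schwarz in $z$ yields the pointwise bound $|\nabla_H U(x,y)|\le(\int_{-h}^h|\nabla_H u|^2\,dz)^{1/2}$, hence $\|\nabla_H U\|_{L^2(S)}\le\|\nabla_H u\|_{L^2(\Omega)}$. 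Second, Minkowski's inequality in integral form combined with H\"older in $z$ gives $\|\tilde U\|_{L^p(S)}\le(2h)^{1-1/p}\|u\|_{L^p(\Omega)}$ for every $p\ge1$. Third, on the bounded planar set $S$ of diameter $L$, the scale--invariant Ladyzhenskaya inequality
$$
\|f\|_{L^4(S)}\le C\|f\|_{L^2(S)}^{1/2}\Bigl(\tfrac{\|f\|_{L^2(S)}}{L}+\|\nabla f\|_{L^2(S)}\Bigr)^{1/2}
$$
and its $L^3$ Gagliardo--Nirenberg analogue hold with $C$ depending only on the shape of $S$.

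To prove \eqref{n2.1}, I would start from $\int_{-h}^h|\varphi\psi|\,dz\le U_\varphi U_\psi$, which is Cauchy--Schwarz in $z$. For the first bound inside the minimum, Cauchy--Schwarz in $(x,y)$ followed by H\"older yields
$$
\int_S\tilde\Phi\,U_\varphi U_\psi\,dxdy\le\|\tilde\Phi\|_{L^2(S)}\,\|U_\varphi\|_{L^4(S)}\,\|U_\psi\|_{L^4(S)},
$$
and then the three facts above produce the $h^{1/2}\|\phi\|_2$ factor together with the two Ladyzhenskaya--type factors. For the second bound in the minimum, I would instead keep $U_\psi$ in $L^2$ and redistribute via a triple H\"older: $\|\tilde\Phi\|_{L^4(S)}\|U_\varphi\|_{L^4(S)}\|\psi\|_2$; applying Ladyzhenskaya to $\tilde\Phi$, using $\|\tilde\Phi\|_{L^2(S)}\le(2h)^{1/2}\|\phi\|_2$ and $\|\nabla_H\tilde\Phi\|_{L^2(S)}\le(2h)^{1/2}\|\nabla_H\phi\|_2$, yields the required $h^{1/2}\|\phi\|_2^{1/2}(\|\phi\|_2/L+\|\nabla_H\phi\|_2)^{1/2}$ prefactor.

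For \eqref{n2.2}, the same factorization combined with a three--exponent H\"older inequality with exponents $(6,3,2)$ gives
$$
\int_S\tilde\Phi\,U_\varphi U_\psi\,dxdy\le\|\tilde\Phi\|_{L^6(S)}\,\|U_\varphi\|_{L^3(S)}\,\|U_\psi\|_{L^2(S)};
$$
the Minkowski bound supplies $\|\tilde\Phi\|_{L^6(S)}\le(2h)^{5/6}\|\phi\|_6$ (this is where the $h^{5/6}$ comes from), the $L^3$ Gagliardo--Nirenberg inequality handles $U_\varphi$, and $\|U_\psi\|_{L^2(S)}=\|\psi\|_2$. For \eqref{n2.3}, the integrand is $U_\phi^2\,U_\varphi^2$; Cauchy--Schwarz in $(x,y)$ bounds it by $\|U_\phi\|_{L^4(S)}^2\|U_\varphi\|_{L^4(S)}^2$, and Ladyzhenskaya together with the first fact finishes the estimate.

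The only delicate point, which I would flag and track throughout, is the scale--invariance of the 2D constant: because $S$ is an arbitrary bounded planar domain and no mean--zero hypothesis is imposed, one cannot use a pure Poincar\'e--Sobolev inequality, and must instead use the form of Ladyzhenskaya/Gagliardo--Nirenberg with the lower--order term $\|f\|_{L^2(S)}/L$ present. This is obtained by rescaling $S$ to a reference domain of unit diameter, so that $C$ depends only on the shape of $S$; keeping this rescaling explicit is what pins down the precise $L$--dependence on the right--hand sides of \eqref{n2.1}--\eqref{n2.3}.
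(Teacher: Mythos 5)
Your argument is correct and yields exactly the stated powers of $h$ and $L$, but it is organized differently from the paper's proof. The paper does not prove \eqref{n2.1} at all (it simply cites Lemma 2.1 of the earlier work \cite{CAOLITITI3}), and for \eqref{n2.2}--\eqref{n2.3} it keeps the $z$-variable alive: after H\"older in $z$ and in $(x,y)$ it uses Minkowski's inequality to pass the outer $L^{3/2}$ (resp. $L^2$) norm inside the $z$-integral, applies the dimensionless 2D Gagliardo--Nirenberg/Ladyzhenskaya inequality slice by slice, and finishes with H\"older in $z$. You instead apply the 2D inequalities directly to the vertically reduced functions $U_u=(\int_{-h}^h|u|^2dz)^{1/2}$ and $\tilde U_u=\int_{-h}^h|u|dz$, which requires the gradient commutation bounds $|\nabla_H U_u|\le(\int_{-h}^h|\nabla_H u|^2dz)^{1/2}$ and $|\nabla_H\tilde U_u|\le\int_{-h}^h|\nabla_H u|dz$; the first needs a word of justification where $U_u$ vanishes (regularize with $U_{u,\epsilon}=(\epsilon+\int|u|^2dz)^{1/2}$ and pass to the limit), but this is routine for Sobolev functions. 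What your route buys is a unified three-line scheme (triple H\"older on $S$ with exponents $(2,4,4)$, $(4,4,2)$, $(6,3,2)$, $(2,2)$ respectively) that also gives a self-contained proof of both terms in the minimum of \eqref{n2.1}; what the paper's route buys is that it never differentiates a composite function, only using Minkowski and slicewise 2D estimates. Your closing remark about needing the scale-invariant form of the 2D inequalities, with the lower-order term $\|f\|_{2,S}/L$ and a constant depending only on the shape of $S$ via rescaling to unit diameter, is exactly the point the paper encodes in the word ``dimensionless,'' so no gap there either.
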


\begin{proof}
Note that (\ref{n2.1}) has been included in Lemma 2.1 in
\cite{CAOLITITI3}. We now consider the proof of (\ref{n2.2}).
By the H\"older, (dimensionless) Gagliardo-Nirenberg and Minkowski inequalities, we deduce
\begin{align*}
  &\int_{S}\left(\int_{-h}^h|\phi(x,y,z)|dz\right)\left(\int_{-h}^h|\varphi(x,y,z)\psi
  (x,y,z)|dz\right)dxdy\\
  \leq&\int_S\left(\int_{-h}^h|\phi|dz\right)\left(\int_{-h}^h|\varphi|^2 dz\right)^{1/2}
  \left(\int_{-h}^h|\psi|^2dz\right)^{1/2}dxdy\\
  \leq&\left[\int_S\left(\int_{-h}^h|\phi|dz\right)^6dxdy\right]^{1/6}
  \left[\int_S\left(\int_{-h}^h|\varphi|^2dz\right)^{3/2}dxdy\right]^{1/3}\|\psi\|_2\\
  \leq&Ch^{\frac56}\|\phi\|_6\left[\int_{-h}^h\left(\int_S|\varphi|^3dxdy\right)^{2/3}dz \right]^{1/2}\|\psi\|_2\\
  \leq&Ch^{\frac56}\|\phi\|_6\left(\int_{-h}^h\|\varphi\|_{2,S}^{4/3} \left(\frac{\|\varphi\|_{2,S}}{L}+\|\nabla_H\varphi\|_{2,S}\right) ^{2/3}dz\right)^{1/2}
  \|\psi\|_2\\
  \leq&Ch^{\frac56}\|\phi\|_6  \|\varphi\|_2^{2/3}\left(\frac{\|\varphi\|_2}{L} +\|\nabla_H\varphi\|_2\right)^{1/3}
  \|\psi\|_2,
\end{align*}
proving (\ref{n2.2}). Similarly
\begin{align*}
  &\int_{S}\left(\int_{-h}^h|\phi(x,y,z)|^2dz\right)\left(\int_{-h}^h|\varphi(x,y,z)|^2dz
  \right)dxdy\nonumber\\
  \leq&C\left[\int_S\left(\int_{-h}^h|\phi|^2dz\right)^2dxdy\right]^{1/2}
  \left[\int_S\left(\int_{-h}^h|\varphi|^2dz\right)^2dxdy\right]^{1/2}\\
  \leq&C\left[\int_{-h}^h\left(\int_S|\phi|^4dxdy\right)^{1/2}dz\right]
  \left[\int_{-h}^h\left(\int_S|\varphi|^4dxdy\right)^{1/2}dz\right]\\
  \leq&C \int_{-h}^h\|\phi\|_{2,S}\left(\frac{\|\phi\|_{2,S}}{L}+ \|\nabla_H\phi\|_{2,S}\right)dz \\
  &\times \int_{-h}^h\|\varphi\|_{2,S}\left(\frac{\|\varphi\|_{2,S}}{L} +\|\nabla_H\varphi\|_{2,S}\right)dz \\
  \leq&C\|\phi\|_2\left(\frac{\|\phi\|_2}{L}+\|\nabla_H\phi\|_2\right) \|\varphi\|_2\left(\frac{\|\varphi\|_2}{L}+\|\nabla_H\varphi\|_2\right),
\end{align*}
proving (\ref{n2.3}).
\end{proof}

As a directly corollary of Lemma \ref{lad}, noticing that all discs have the same shape,
we have the following lemma.

\begin{lemma}
  \label{lads}
  Let $D_r$ be an arbitrary disk of radius $r$ in $\mathbb R^2$, then we have
\begin{align*}
  &\int_{D_r}\left(\int_{-h}^h|\phi |dz\right)\left(\int_{-h}^h|\varphi \psi
  |dz\right)dxdy\\
  \leq&Ch^\frac12\min\Bigg\{\|\phi\|_2  \|\varphi\|_2^{\frac12} \left(\frac{\|\varphi\|_2}{r}+\|\nabla_H\varphi\|_2\right)^{\frac12}
  \|\psi\|_2^{\frac12}\left(\frac{\|\psi\|_2}{r} +\|\nabla_H\psi\|_2\right)^{\frac12},\\
  &\|\phi\|_2^{\frac12}\left(\frac{\|\phi\|_2}{r} +\|\nabla_H\phi\|_2\right)^{\frac12}
  \|\varphi\|_2^{\frac12}\left(\frac{\|\varphi\|_2}{r} +\|\nabla_H\varphi\|_2\right)^{\frac12}
  \|\psi\|_2 \Bigg\},
  \end{align*}
  and
  \begin{align*}
  &\int_{D_r}\left(\int_{-h}^h|\phi |dz\right)\left(\int_{-h}^h|\varphi \psi
   |dz\right)dxdy\\
  \leq& C h^{\frac56}\|\phi\|_6  \|\varphi\|_2^{\frac23}\left(\frac{\|\varphi\|_2}{r} +\|\nabla_H\varphi\|_2\right)^{\frac13}
  \|\psi\|_2,
\end{align*}
for an absolute constant $C$.
\end{lemma}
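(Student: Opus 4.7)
The plan is to deduce Lemma \ref{lads} as an immediate specialization of Lemma \ref{lad} to the case $S = D_r$, exploiting two simple observations about discs.

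First I would invoke Lemma \ref{lad} with $S$ taken to be the arbitrary disc $D_r \subset \mathbb{R}^2$. The hypothesis of Lemma \ref{lad} requires only that $S$ be a bounded planar domain, which is satisfied. The stated conclusion of Lemma \ref{lad} gives exactly the desired triple-integral estimates, except with the diameter $L$ of $S$ in place of the radius $r$, and with a constant $C$ that depends on the shape of $S$.

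The two observations are then: (i) any two discs in $\mathbb{R}^2$ are similar to one another (they differ only by translation and scaling), so they all share the same shape; consequently the constant $C$ in Lemma \ref{lad}, which by hypothesis depends only on the shape of $S$, becomes a single absolute constant when restricted to the class of discs. And (ii) for $S = D_r$ the diameter is $L = 2r$, so $\|\cdot\|_2/L = \tfrac{1}{2}\|\cdot\|_2/r$; the harmless factor of $1/2$ is absorbed into the constant $C$, turning each occurrence of $\|\cdot\|_2/L$ in Lemma \ref{lad} into $\|\cdot\|_2/r$.

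There is no real obstacle here — the lemma is stated as a direct corollary, and the proof is essentially bookkeeping: record the value of the diameter of $D_r$ and remark that scale-invariance of the disc class promotes the shape-dependent constant to an absolute one. No new estimate needs to be proved; the two displayed inequalities of Lemma \ref{lads} are line-for-line the inequalities \eqref{n2.1} and \eqref{n2.2} from Lemma \ref{lad}, restated for $S = D_r$.
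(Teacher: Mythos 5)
Your proposal is correct and matches the paper's own treatment: the paper states Lemma \ref{lads} as a direct corollary of Lemma \ref{lad}, precisely because all discs have the same shape (so the shape-dependent constant becomes absolute) and the diameter $L=2r$, so each term $\|\cdot\|_2/L$ is bounded by $\|\cdot\|_2/r$ up to a harmless constant. Nothing further is needed.
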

The following lemma is a Sobolev embedding inequality in terms of
mixed norm $L^p$ spaces, see Li--Xin \cite{LIXIN} for a similar result.

\begin{lemma}\label{sob}Let $S$ be a bounded subset of $\mathbb R^2$.
Denote by $L$ the diameter of $S$. Then, for any function $f$ defined
on $S\times(-h,h)$, it holds that
\begin{eqnarray*}
&&\sup_{-h\leq z\leq h}\|f(\cdot,z)\|_{2,S}\leq \|f\|_2^{1/2}\left(\frac{\|f\|_2}{2h}+2\|\partial_zf\|_2\right)^{1/2},\\
&&\sup_{-h\leq z\leq h}\|f(\cdot,z)\|_{4,S}\leq C\left(\frac{\|f\|_2}{h}+\|\partial_zf\|_2\right)^{1/2} \left(\frac{\|f\|_2}{L}+\|\nabla_Hf\|_2\right)^{1/2},
\end{eqnarray*}
where $C$ is a positive constant depending only on the shape of $S$.
In particular, if $S=D_r$, an arbitrary disk of radius $r$ in $\mathbb R^2$, then one has
$$
\sup_{-h\leq z\leq h}\|f(\cdot,z)\|_{L^4(D_r)}\leq C\left(\frac{\|f\|_2}{r}+\|\nabla_Hf\|_2\right)^{1/2} \left(\frac{\|f\|_2}{h}+\|\partial_zf\|_2\right)^{1/2},
$$
for an absolute positive constant $C$.
\end{lemma}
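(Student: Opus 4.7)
The plan is to prove the three stated estimates in order: the first is a one-dimensional Agmon-type bound in $z$, the second combines it with a slice-wise two-dimensional Ladyzhenskaya/Gagliardo--Nirenberg inequality, and the third is an immediate specialization of the second to disks.

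For the first estimate, I would start from the identity $f^2(x,y,z)=f^2(x,y,z_0)+2\int_{z_0}^{z}f\,\partial_\zeta f\,d\zeta$, valid for any $z,z_0\in[-h,h]$, and average over $z_0\in[-h,h]$ to obtain the pointwise bound $|f(x,y,z)|^2\leq\frac{1}{2h}\int_{-h}^h f^2\,d\zeta+2\int_{-h}^h |f\partial_\zeta f|\,d\zeta$. Integrating in $(x,y)\in S$ and applying Cauchy--Schwarz to the mixed integral $\int_\Omega|f||\partial_z f|\leq\|f\|_2\|\partial_z f\|_2$ produces the right-hand side $\|f\|_2(\|f\|_2/(2h)+2\|\partial_z f\|_2)$; the claim follows after taking a square root, uniformly in $z$.

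For the second estimate, I would apply the dimensional two-dimensional Ladyzhenskaya inequality $\|u\|_{L^4(S)}^2\leq C\|u\|_{L^2(S)}(\|u\|_{L^2(S)}/L+\|\nabla u\|_{L^2(S)})$ to the slice $u=f(\cdot,z)$ for each fixed $z$, and then take the supremum in $z$. Since $\sup_z(ab)\leq(\sup_z a)(\sup_z b)$ for nonnegative $a,b$, the task reduces to estimating the two suprema $\sup_z\|f(\cdot,z)\|_{L^2(S)}$ and $\sup_z\|\nabla_H f(\cdot,z)\|_{L^2(S)}$; the first is controlled by the estimate just proven applied to $f$, and the second by the same estimate applied componentwise to $\nabla_H f$. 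Multiplying the resulting bounds and taking a square root yields the stated inequality. The third estimate then follows as a direct corollary of the second by specializing $S=D_r$ (so that $L=2r$) and noting that the shape-dependent constant becomes absolute because all disks are geometrically similar.

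The main technical obstacle is the sup bound on $\sup_z\|\nabla_H f(\cdot,z)\|_{L^2(S)}$: a strict application of the first estimate to $\nabla_H f$ formally brings in $\|\partial_z\nabla_H f\|_2$, which is not present on the stated right-hand side. One handles this by first establishing the inequality for smooth $f$, where all mixed second derivatives are finite and combine cleanly, and then extending to general $f$ satisfying only $\|f\|_2,\|\partial_z f\|_2,\|\nabla_H f\|_2<\infty$ by a standard density/approximation argument.
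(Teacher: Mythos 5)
Your proof of the first estimate is exactly the paper's argument (the averaged fundamental-theorem-of-calculus bound in $z$ followed by Cauchy--Schwarz), and the reduction of the disk case to the general case is also the same. The problem is the second estimate, and the gap you flag is not a technicality that a density argument can repair: it is fatal to the chosen decomposition. If you bound $\sup_z\|f(\cdot,z)\|_{4,S}^2$ by $\bigl(\sup_z\|f(\cdot,z)\|_{2,S}\bigr)\bigl(\sup_z\|f(\cdot,z)\|_{2,S}/L+\sup_z\|\nabla_Hf(\cdot,z)\|_{2,S}\bigr)$ and then control $\sup_z\|\nabla_Hf(\cdot,z)\|_{2,S}$ by the first estimate applied to $\nabla_Hf$, then even for $f\in C^\infty$ the inequality you actually prove has $\|\partial_z\nabla_Hf\|_2$ on the right-hand side. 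Passing to general $f$ by mollification cannot delete that term afterwards: density arguments transfer a \emph{fixed} inequality from smooth functions to a completion, they do not improve its form, and there is no bound of $\|\partial_z\nabla_Hf\|_2$ by $\|f\|_2$, $\|\partial_zf\|_2$, $\|\nabla_Hf\|_2$ (consider $f(x,y,z)=\sin(nx)\sin(nz)$ scaled by $1/n$, for which the three first-order quantities stay bounded while the mixed derivative blows up). So your route proves a strictly weaker statement than the lemma, which is precisely calibrated so that only first derivatives appear.

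The paper avoids the mixed derivative by applying the one-dimensional interpolation in $z$ not to $\nabla_Hf$ but to the scalar function $z\mapsto\|f(\cdot,z)\|_{4,S}^4$: since $\frac{d}{dz}\|f(\cdot,z)\|_{4,S}^4=4\int_S|f|^2f\,\partial_zf\,dxdy$, only $\partial_zf$ enters, giving
$$
\sup_{-h\le z\le h}\|f(\cdot,z)\|_{4,S}^4\le\frac{\|f\|_4^4}{2h}+4\int_{-h}^h\|f\|_{6,S}^3\|\partial_zf\|_{2,S}\,dz .
$$
One then uses the two-dimensional Gagliardo--Nirenberg inequality in the form $\|f\|_{6,S}^3\le C\|f\|_{4,S}^2\bigl(\|f\|_{2,S}/L+\|\nabla_Hf\|_{2,S}\bigr)$ inside the $z$-integral, pulls out $\sup_z\|f\|_{4,S}^2$, and absorbs it by Young's inequality; the leftover $\|f\|_4^4$ is estimated by the slicewise Ladyzhenskaya inequality together with your (correct) first estimate for $\sup_z\|f\|_{2,S}^2$. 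This is the step your proposal is missing: the vertical and horizontal derivatives must be coupled through the $L^4$-in-slices quantity itself rather than through separate sup-in-$z$ bounds on $f$ and on $\nabla_Hf$. I recommend reworking the second estimate along these lines; the first and third parts of your write-up can stand as they are.
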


\begin{proof}
Using the fact that $|k(z)|\leq\frac{1}{2h}\int_{-h}^h|k(\xi)|d\xi+\int_{-h}^h|k'(\xi)|d\xi$, for any $z\in(-h,h)$, one has
\begin{eqnarray}
\sup_{-h\leq z\leq h}\|f(\cdot,z)\|_{2,S}^2\leq\frac{1}{2h}\int_{h}^h\|f\|_{2,S}^2dz+\int_{-h}^h\left|\frac{d}{dz}\|f\|_{2,S}^2\right|dz\nonumber\\
\leq\frac{\|f\|_2^2}{2h}+2\int_\Omega|f||\partial_zf|dxdydz\leq \|f\|_2\left(\frac{\|f\|_2}{2h}+2\|\partial_zf\|_2\right),\label{2.2}
\end{eqnarray}
proving the first conclusion.

Using again the fact that $|k(z)|\leq\frac{1}{2h}\int_{-h}^h|k(\xi)|d\xi+\int_{-h}^h|k'(\xi)|d\xi$, for all $z\in(-h,h)$, it follows from the (dimensionless) Gagliardo-Nirenberg and Cauchy-Schwarz inequalities that
\begin{align*}
  &\sup_{-h\leq z\leq h}\|f(\cdot,z)\|_{4,S}^4\leq\frac{1}{2h}\int_{-h}^h\|f\|_{4,S}^4dz+\int_{-h}^h\left|\frac{d}{dz}\|f\|_{4,S}^4\right|dz\\
  \leq&\frac{\|f\|_4^4}{2h}+4\int_\Omega|f|^3|\partial_zf|dxdydz
  \leq \frac{\|f\|_4^4}{2h} +4\int_{-h}^h\|f\|_{6,S}^3\|\partial_zf\|_{2,S}dz\\
  \leq&\frac{\|f\|_4^4}{2h} +C\int_{-h}^h\|f\|_{4,S}^2\left(\frac{\|f\|_{2,S}}{L}+\|\nabla_H f\|_{2,S}\right)\|\partial_zf\|_{2,S}dz\\
  \leq&\frac{\|f\|_4^4}{2h}+C\left(\sup_{-h\leq z\leq h}\|f\|_{4,S}^2\right)\int_{-h}^h\left(\frac{\|f\|_{2,S}}{L} +\|\nabla_H f\|_{2,S}\right)\|\partial_zf\|_{2,S}dz\\
  \leq&\frac{1}{2}\sup_{-h\leq z\leq h}\|f\|_{4,S}^4+C\left[\frac{\|f\|_4^4}{h} +\left(\frac{\|f\|_2^2}{L^2}+\|\nabla_Hf\|_2^2\right) \|\partial_zf\|_2^2\right],
\end{align*}
and thus
\begin{equation}
  \sup_{-h\leq z\leq h}\|f(\cdot,z)\|_{4,S}^4\leq C\left[\frac{\|f\|_4^4}{h} +\left(\frac{\|f\|_2^2}{L^2}+\|\nabla_Hf\|_2^2\right) \|\partial_zf\|_2^2\right]. \label{2.3}
\end{equation}

We estimate $\|f\|_4^4$, on the right-hand side of the above inequality, as follows. By the (dimensionless) two-dimensional Ladyzhenskaya inequality, it follows from (\ref{2.2}) that
\begin{align*}
  \|f\|_4^4=&\int_{-h}^h\|f\|_{4,S}^4dz\leq C\int_{-h}^h\|f\|_{2,S}^2\left(\frac{\|f\|_{2,S}}{L}+ \|\nabla_H f\|_{2,S}\right)^2 dz\\
  \leq&C\left(\sup_{-h\leq z\leq h}\|f\|_{2,S}^2\right) \left(\frac{\|f\|_2^2}{L^2}+\|\nabla_Hf\|_2^2\right)\\
  \leq&C\|f\|_2\left(\frac{\|f\|_2}{h}+\|\partial_zf\|_2\right) \left(\frac{\|f\|_2^2}{L^2}+\|\nabla_Hf\|_2^2\right).
\end{align*}
Substituting the above inequality into (\ref{2.3}) and using the Cauchy-Schwarz inequality yield
\begin{align*}
\sup_{-h\leq z\leq h}\|f(\cdot,z)\|_{4,S}^4\leq & C\left[\|\partial_zf\|_2^2+ \frac{\|f\|_2}{h}\left(\frac{\|f\|_2}{h}+\|\partial_zf\|_2\right)\right] \left(\frac{\|f\|_2^2}{L^2}+\|\nabla_Hf\|_2^2\right)\\
\leq&C\left(\frac{\|f\|_2^2}{h^2}+\|\partial_zf\|_2^2\right) \left(\frac{\|f\|_2^2}{L^2}+\|\nabla_Hf\|_2^2\right).
\end{align*}
which implies
$$
\sup_{-h\leq z\leq h}\|f(\cdot,z)\|_{4,S}\leq C\left(\frac{\|f\|_2}{h}+\|\partial_zf\|_2\right)^{1/2} \left(\frac{\|f\|_2}{L}+\|\nabla_Hf\|_2\right)^{1/2},
$$
proving the second conclusion. The third conclusion is a straightforward corollary of the second one, as all the discs have the same shape. This completes the proof.
\end{proof}

We also need the following logarithmic type anisotropic Sobolev
embedding inequality, which generalizes that in \cite{CAOLITITI3}
to the anisotropic case, see Cao--Farhat--Titi \cite{CAOFARHATTITI}, Cao--Wu \cite{CAOWU} and Danchin--Paicu \cite{DANCHINPAICU} for some
related inequalities in the isotropic setting in the 2D case.
Some similar anisotropic inequality was used by Li--Titi \cite{LITITIBE} in the study of the
Boussinesq equations to relax the assumptions on the initial data.

\begin{lemma}
  \label{logsob}
Let $\textbf{p}=(p_1, p_2, p_3)$, with $p_i\in(1,\infty)$, and
$\frac{1}{p_1}+\frac{1}{p_2}+\frac{1}{p_3}<1.$
Then, for any periodic function $F$ on $\Omega$, we have
\begin{equation*}
  \|F\|_\infty\leq C_{ {\textbf p},\lambda,\Omega}\max\left\{1,\sup_{r\geq2}\frac{\|F\|_r}{r^\lambda}\right\}
  \log^\lambda
  \left(\sum_{i=1}^3(\|F\|_{p_i}+\|\partial_iF\|_{p_i})+e\right),
\end{equation*}
for any $\lambda>0$.
\end{lemma}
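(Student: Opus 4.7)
The plan is to adapt the classical Brezis--Gallouet--Wainger argument to the anisotropic setting, exploiting the strictly positive deficit $\delta := 1 - \sum_{i=1}^{3} 1/p_i > 0$ to produce geometric decay on the high-frequency tail. Set $A := \max\{1, \sup_{r \geq 2} \|F\|_r / r^{\lambda}\}$ and $M := \sum_i (\|F\|_{p_i} + \|\partial_i F\|_{p_i})$. Introduce a periodic Littlewood--Paley decomposition $F = \sum_{j \geq -1} \Delta_j F$ with low-pass cutoff $S_N F := \sum_{j \leq N} \Delta_j F$, and split
\[
\|F\|_\infty \leq \|S_N F\|_\infty + \sum_{j > N} \|\Delta_j F\|_\infty,
\]
where $N \in \mathbb{N}$ is chosen after the two pieces are bounded.

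For the low-frequency piece, $S_N F$ is a trigonometric polynomial of degree $\lesssim 2^N$, so the (isotropic) Nikolskii inequality on $\mathbb{T}^3$ yields $\|S_N F\|_\infty \leq C\, 2^{3N/r} \|F\|_r$ for any $r \geq 2$. Taking $r = \max(2, cN)$ for $c$ large enough makes $2^{3N/r}$ bounded, and the hypothesis $\|F\|_r \leq A r^{\lambda}$ then delivers $\|S_N F\|_\infty \leq C A N^{\lambda}$ (for $N$ small this is absorbed into $C A$). For the high-frequency piece, I would apply the anisotropic Nikolskii inequality direction by direction on each block $\Delta_j F$ (which has Fourier support in $\{|k| \sim 2^j\}$ and in particular degree $\lesssim 2^j$ in each coordinate). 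After relabeling so that $p_1 \leq p_2 \leq p_3$, iterating the one-dimensional Bernstein/Nikolskii estimate and then exchanging the resulting mixed norm via Minkowski's integral inequality and H\"older's inequality on the bounded domain yields
\[
\|\Delta_j F\|_\infty \leq C\, 2^{j \sum_i 1/p_i} \sum_{i=1}^{3} \|\Delta_j F\|_{p_i}.
\]
Combining this with the standard frequency-localization bound $\|\Delta_j F\|_{p_i} \lesssim 2^{-j} \|\partial_i F\|_{p_i}$ and summing the resulting geometric series yields $\sum_{j > N} \|\Delta_j F\|_\infty \leq C\, 2^{-N\delta} M$.

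Balancing the two contributions gives $\|F\|_\infty \leq C A N^{\lambda} + C\, 2^{-N\delta} M$. Choosing $N \sim \delta^{-1} \log(M + e)$ makes the second term $O(1)$ while bounding the first by $C A \log^{\lambda}(M + e)$; since $A \geq 1$ and $\log^{\lambda}(M + e) \geq 1$, the additive constant is absorbed and the claim follows with the constant depending on $\mathbf{p}$ (through $\delta$), on $\lambda$, and on $\Omega$ (through the mixed-norm interchange constants).

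The main technical obstacle is the rigorous proof of the anisotropic Nikolskii inequality $\|\Delta_j F\|_\infty \lesssim \prod_i 2^{j/p_i} \|\Delta_j F\|_{L^{\mathbf{p}}_{\mathrm{mix}}}$ on the torus: iterating the one-dimensional version requires that the function whose $L^\infty$ one takes at each stage still admits a trigonometric-polynomial structure of comparable degree, which is typically recovered by raising $|\Delta_j F|$ to an integer power or by representing the block as convolution with a de la Vall\'ee Poussin-type kernel, and which is precisely what forces the ordering $p_1 \leq p_2 \leq p_3$ so that the successive Nikolskii exponents $p_{i+1}/p_i$ remain $\geq 1$. The ancillary boundedness of the Littlewood--Paley projectors on each $L^{p_i}$ is standard and contributes only absolute constants.
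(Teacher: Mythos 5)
Your overall architecture (low/high frequency splitting, Nikolskii on the low-pass piece with $r\sim N$, geometric decay with rate $\delta=1-\sum_i 1/p_i$ on the tail, then $N\sim\delta^{-1}\log(M+e)$) is the standard Brezis--Gallouet--Wainger scheme, and the low-frequency half and the final balancing are fine. The genuine gap is in the high-frequency estimate: the ``standard frequency-localization bound'' $\|\Delta_j F\|_{p_i}\lesssim 2^{-j}\|\partial_i F\|_{p_i}$, with a \emph{single} directional derivative measured in its own $L^{p_i}$, is false for isotropic Littlewood--Paley blocks. The annulus $\{|k|\sim 2^j\}$ contains frequencies with $k_i=0$; concretely, for $F(x,y,z)=\sin(2\pi n y)$ with $n\sim 2^j$ one has $\Delta_jF=F$, so $\|\Delta_jF\|_{p_1}\sim 1$ while $\partial_xF\equiv 0$, and the claimed bound fails completely. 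The correct reverse Bernstein inequality requires the full gradient in a single Lebesgue exponent, $\|\Delta_jF\|_p\lesssim 2^{-j}\sum_i\|\partial_iF\|_p$, which is incompatible with the anisotropic hypothesis: on the bounded periodic box you can only lower exponents by H\"older, so $\partial_iF\in L^{p_i}$ gives no control of $\|\partial_iF\|_{p}$ for $p>p_i$. The natural repair --- a tensorized (directional) Littlewood--Paley decomposition, so that the block indexed by $(j_1,j_2,j_3)$ gains $2^{-j_{i^*}}$ from the one-dimensional reverse Bernstein in the direction $i^*$ with $j_{i^*}=\max_l j_l$ --- runs into exactly the difficulty you defer to ``Minkowski plus H\"older'': after the mixed-norm Young/Nikolskii step you are left with a mixed norm of $\partial_{i^*}F$ carrying exponent $p_{i^*}$ in direction $x_{i^*}$ but exponents $p_l$ in the other directions, and converting this to the pure norm $\|\partial_{i^*}F\|_{p_{i^*}}$ by H\"older on a bounded domain goes the wrong way unless $p_{i^*}=\max_l p_l$, which has nothing to do with $j_{i^*}=\max_l j_l$. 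So as written the tail estimate $\sum_{j>N}\|\Delta_jF\|_\infty\leq C2^{-N\delta}M$ is not established, and fixing it is not a routine matter of bookkeeping.

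For comparison, the paper avoids Fourier analysis entirely: it extends $F$ periodically, multiplies by a cutoff, and proves the $\mathbb{R}^3$ inequality (Lemma \ref{lemapp}) by representing $|f|^q$ at the maximum point through the Newtonian potential, performing the anisotropic change of variables $y_i=|x_i|^{\alpha_i-1}x_i$ with $\alpha_i=1/p_i$, applying H\"older with the conjugate triple $(\mu_i,\kappa_i,p_i)$ --- the kernel is integrable precisely because $\sum_i\alpha_i<1$ --- and then optimizing over $q\geq 3$, which is where the hypothesis $\sup_{r\geq2}\|F\|_r/r^\lambda<\infty$ produces the factor $\log^\lambda$. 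That route pairs each $\partial_iF$ with its own exponent $p_i$ from the start and never needs a mixed-to-pure norm conversion, which is exactly the point where your sketch breaks down.
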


\begin{proof}
Extending $F$ periodically to the whole space. Take a function $\phi\in C_0^\infty(\mathbb R^3)$, such that $\phi\equiv1$ on $\Omega$, and $0\leq\phi\leq1$ on $\mathbb R^3$. Set $f=F\phi$.
Noticing that
$$
\|F\|_\infty\leq\|f\|_{L^\infty(\mathbb R^3)}, \quad\|f\|_{L^r(\mathbb R^3)}\leq C\|F\|_r,\quad\|\partial_if\|_{L^{r}(\mathbb R^3)}\leq C(\|F\|_r+\|\partial_iF\|_r),
$$
it follows from Lemma \ref{lemapp} (see the Appendix) that
\begin{align*}
  &\|F\|_\infty\leq\|f\|_{L^\infty(\mathbb R^3)}\\
  \leq&C_{\textbf{p},\lambda}\max\left\{1,\sup_{r\geq2} \frac{\|f\|_{L^r(\mathbb R^3)}}{r^\lambda}\right\}
   \log^\lambda
  \left(\sum_{i=1}^3(\|f\|_{L^{p_i}(\mathbb R^3)}+\|\partial_if\|_{L^{p_i}(\mathbb R^3)})+e\right)\\
  \leq&C_{\textbf{p},\lambda,\Omega}\max\left\{1,\sup_{r\geq2} \frac{\|F\|_r}{r^\lambda}\right\}
   \log^\lambda
  \left(\sum_{i=1}^3(\|F\|_{{p_i}}+\|\partial_iF\|_{{p_i}})+e\right),
\end{align*}
proving the conclusion.
\end{proof}

We will use the following logarithmic type Gronwall inequality, see
Li--Titi \cite{LITITIBE,LITITITCM,LITITITCMMOISTURE} for some similar type
Gronwall inequalities with logarithmic terms.

\begin{lemma}
  \label{loggron}
Given a positive time $\mathcal T\in(0,\infty)$. Let $A(t), B(t)$ and $f(t)$ be nonnegative integrable functions on $[0,\mathcal T)$, with $A$ being absolutely continuous on $[0,\mathcal T)$, such that the following holds
$$
A'(t)+B(t)\leq KA(t)\log B(t)+f(t), \quad t\in(0,\mathcal T),
$$
where $K\geq1$ is a constant.
Then, the following estimate holds
$$
A(t)+\int_0^tB(s)ds\leq e^{Q(t)}(1+2Q(t)),
$$
for any $t\in[0,\mathcal T)$, where
$$
Q(t)=e^{Kt}
\left(\log(A(0)+1)+(2K^2+1)t+\int_0^tf(s)ds\right).
$$
\end{lemma}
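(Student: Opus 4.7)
The plan is to reduce the hypothesis to a linear-in-$\Phi$ differential inequality for $\Phi:=\log(A+1)$, so that after a single application of the integrating factor $e^{-Kt}$ both the bound $A(t)\leq e^{Q(t)}$ and the bound on $\int_0^t B$ fall out together.

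First, I would absorb half of $B$ on the left by a Young-type inequality of the form $\alpha\log\beta\leq \beta/2+\alpha\log(2\alpha)-\alpha$ (for $\alpha,\beta>0$, obtained by maximising the left-hand side in $\beta$). Because $\log B$ can be negative, one first replaces it by $\log(B+1)\geq 0$ and $A$ by $E:=A+1\geq 1$; the bound $KA\log B\leq KE\log(B+1)$ holds trivially in both cases $B\geq 1$ and $B<1$. Applying the Young inequality with $\alpha=KE$, $\beta=B+1$ and using $\log(2KE)\leq 2K+\log E$ (valid since $K\geq 1$) turns the assumption into
\[
E'+\tfrac{B}{2}\leq 2K^{2}E+KE\log E+\tfrac12+f.
\]
Dividing by $E\geq 1$ and setting $\Phi=\log E\geq 0$ yields the linear inequality
\[
\Phi'+\frac{B}{2E}\leq K\Phi+\bigl(2K^{2}+\tfrac12\bigr)+f.
\]

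Multiplying by the integrating factor $e^{-Ks}$ and integrating on $[0,t]$ gives
\[
\Phi(t)+\int_0^t e^{K(t-s)}\frac{B(s)}{2E(s)}\,ds\leq e^{Kt}\Bigl(\Phi(0)+(2K^{2}+\tfrac12)t+\int_0^t f\,ds\Bigr)\leq Q(t).
\]
The first summand is exactly $\log(A(t)+1)\leq Q(t)$, i.e.\ $A(t)\leq e^{Q(t)}$. From the second summand, since $e^{K(t-s)}\geq 1$, one obtains $\int_0^t B/(2E)\,ds\leq Q(t)$; using $E(s)\leq e^{\Phi(s)}\leq e^{Q(t)}$ (monotonicity of $Q$) then yields $\int_0^t B\,ds\leq 2Q(t)e^{Q(t)}$. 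Adding the two bounds produces $A(t)+\int_0^t B\,ds\leq e^{Q(t)}(1+2Q(t))$, as desired.

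The main obstacle is the first step: a naive Gronwall on $A$ fails because $\log B$ is uncontrolled in sign and can in particular be large and positive. The crucial trick is to insert a $B/2$ on the right of the Young inequality so that it is absorbed into $B$ on the left; that the residual term then reads $\alpha\log(2\alpha)$ with $\alpha=KE$ is precisely what allows the linearisation in $\Phi=\log E$, and working with $E=A+1$ rather than $A$ keeps the reduction well-defined at $A=0$.
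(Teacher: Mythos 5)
Your proposal is correct and follows essentially the same route as the paper: pass to $\log(A+1)$, absorb $\tfrac12 B$ into the left-hand side at the cost of a term linear in $\log(A+1)$ plus a constant, apply Gronwall with the factor $e^{Kt}$, and then recover $\int_0^t B\,ds$ by reinserting $E(s)\leq e^{Q(t)}$ via the monotonicity of $Q$. The only difference is cosmetic: you absorb $B$ through the Young-type bound $\alpha\log\beta\leq\beta/2+\alpha\log(2\alpha)-\alpha$ applied to $\beta=B+1$, whereas the paper splits $\log B_1=\log(B_1/A_1)+\log A_1$ with $B_1=B+A+1$ and uses $\log z\leq 2\sqrt z$ together with Cauchy--Schwarz, yielding the same structure and (up to the harmless $\tfrac12$ versus $1$) the same constants.
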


\begin{proof}
Setting
$$
A_1(t)=A(t)+1,\quad B_1(t)=B(t)+A(t)+1,
$$
then, by assumption, we have
$$
A_1'(t)+B_1(t)\leq KA_1(t)\log B_1(t)+A_1(t)+f(t).
$$
Dividing both sides of the above inequality by $A_1(t)$, noticing that $A_1(t)\geq1$, one has
\begin{align*}
A_1'(t)+\frac{B_1(t)}{A_1(t)}\leq& K\log B_1(t)+f(t)+1\\
=&K\log\frac{B_1(t)}{A_1(t)}+K\log A_1(T)+f(t)+1.
\end{align*}
One can easily check that $\log z\leq 2\sqrt z$, for any $z\geq1$, and thus, noticing that $\frac{B_1(t)}{A_1(t)}\geq1$, we have $\log\frac{B_1(t)}{A_1(t)}\leq 2\sqrt{\frac{B_1(t)}{A_1(t)}}$. Thanks to this estimate, it follows from the above inequality and the Cauchy-Schwarz inequality that
\begin{align*}
A_1'(t)+\frac{B_1(t)}{A_1(t)}\leq&2K\sqrt{\frac{B_1(t)}{A_1(t)}}+K\log A_1(T)+f(t)+1\\
\leq&\frac{B_1(t)}{2A_1(t)}+K\log A_1(T)+f(t)+2K^2+1,
\end{align*}
that is
$$
A_1'(t)+\frac{B_1(t)}{2A_1(t)}\leq K\log A_1(t)+f(t)+2K^2+1.
$$
Applying the Gronwall inequality to the above inequality yields
\begin{align*}
\log  A_1(t)+\int_0^t\frac{B_1(s)}{2A_1(s)}ds\leq & e^{Kt}
\left(\log A_1(0)+(2K^2+1)t+\int_0^tf(s)ds\right)\\
=& e^{Kt}
\left(\log(A(0)+1)+(2K^2+1)t+\int_0^tf(s)ds\right)=:Q(t).
\end{align*}
It is obvious that $Q(t)$ is an increasing function of $t$. Therefore, we deduce
\begin{align*}
A_1(t)+\int_0^tB_1(s)ds=&e^{\log A_1(t)}+\int_0^t\frac{B_1(s)}{2A_1(s)} 2e^{\log A_1(s)}ds\\
  \leq&e^{Q(t)}+2\int_0^t\frac{B_1(s)}{2A_1(s)}e^{Q(s)}ds\leq e^{Q(t)}(1+2Q(t)),
\end{align*}
which, recalling the definitions of $A_1$ and $B_1$, implies the conclusion.
\end{proof}

The next lemma is a version of the Aubin-Lions lemma.

\begin{lemma}\label{AL}
[See Simon \cite{Simon} Corollary 4] Let $\mathcal T\in(0,\infty)$. Assume that $X, B$ and $Y$ are three Banach spaces, with
$X\hookrightarrow\hookrightarrow B\hookrightarrow Y.$ Then it holds that

(i) If $\mathscr F$ is a bounded subset of $L^p(0, \mathcal T; X)$, where $1\leq p<\infty$, and $\frac{\partial \mathscr F}{\partial t}=\left\{\frac{\partial f}{\partial t}|f\in \mathscr F\right\}$
is bounded in $L^1(0,\mathcal  T; Y)$, then $\mathscr F$ is relatively compact in $L^p(0,\mathcal  T; B)$;

(ii) If $\mathscr F$ is bounded in $L^\infty(0,\mathcal  T; X)$ and $\frac{\partial \mathscr F}{\partial t}$ is bounded in $L^r(0,\mathcal  T; Y)$, where $r>1$, then $\mathscr F$ is relatively compact in $C([0, \mathcal T]; B)$.
\end{lemma}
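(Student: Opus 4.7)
The plan is to follow the classical Simon--Aubin--Lions scheme, built around two pillars: Ehrling's interpolation inequality between the three spaces, and the Kolmogorov--M.~Riesz--Fr\'echet characterization of precompactness in $L^p(0,\mathcal T;B)$ for part (i), together with the vector-valued Arzel\`a--Ascoli theorem in $C([0,\mathcal T];B)$ for part (ii). Ehrling's lemma, proved by contradiction using the compactness $X\hookrightarrow\hookrightarrow B$, yields, for every $\varepsilon>0$, a constant $C_\varepsilon$ such that
$$
\|u\|_B\leq\varepsilon\|u\|_X+C_\varepsilon\|u\|_Y,\qquad u\in X.
$$
This is the only place where compactness of $X\hookrightarrow B$ enters, and it allows us to convert cheap $Y$-estimates into useful $B$-estimates at the cost of a small, but uniformly controlled, $X$-term.

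For part (i), I would verify the two Kolmogorov--Riesz conditions for precompactness in $L^p(0,\mathcal T;B)$. First, for each fixed $0<a<b<\mathcal T$, the set of averages $\{\tfrac{1}{b-a}\int_a^b f(s)\,ds:f\in\mathscr F\}$ is bounded in $X$ by H\"older's inequality and the hypothesis, and hence precompact in $B$ because $X\hookrightarrow\hookrightarrow B$. Second, for the uniform $L^p$-translation estimate, I write
$$
f(t+h)-f(t)=\int_t^{t+h}\partial_sf(s)\,ds,
$$
so the $L^1(0,\mathcal T;Y)$ bound on $\partial_t\mathscr F$ and the uniform absolute continuity of the Lebesgue integral give $\sup_{0\le t\le \mathcal T-h}\|f(t+h)-f(t)\|_Y\le\omega(h)$ with $\omega(h)\to0$ uniformly in $f\in\mathscr F$. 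Applying Ehrling pointwise and integrating,
$$
\|\tau_h f-f\|_{L^p(0,\mathcal T-h;B)}\le 2\varepsilon\,\|f\|_{L^p(0,\mathcal T;X)}+C_\varepsilon\,\mathcal T^{1/p}\,\omega(h);
$$
sending $h\to0$ and then $\varepsilon\to0$ gives uniform $L^p$-equicontinuity, and Kolmogorov--Riesz closes the argument.

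For part (ii), the bound $\partial_t\mathscr F\subset L^r(0,\mathcal T;Y)$ with $r>1$ upgrades the translation estimate by H\"older's inequality to
$$
\|f(t+h)-f(t)\|_Y\le h^{1-1/r}\,\|\partial_tf\|_{L^r(0,\mathcal T;Y)},
$$
so each $f$ admits a $(1-\tfrac{1}{r})$-H\"older continuous representative in $Y$, uniformly in $\mathscr F$. Combined with the $L^\infty(0,\mathcal T;X)$ bound, a standard weak-$\ast$/density argument shows that $f(t)$ exists and is bounded in $X$ for every $t$, hence precompact in $B$. Ehrling again converts the uniform $Y$-H\"older estimate into uniform $B$-equicontinuity (the unbounded-looking $X$-term on the right is absorbed by the tunable $\varepsilon$), and the vector-valued Arzel\`a--Ascoli theorem gives relative compactness in $C([0,\mathcal T];B)$.

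The main obstacle is really the juggling in part (i): one must produce a uniform $B$-translation estimate without any direct control of $\partial_t f$ in $B$ itself, and the inequality $\|f(t+h)-f(t)\|_B\le\varepsilon\|f(t+h)-f(t)\|_X+C_\varepsilon\|f(t+h)-f(t)\|_Y$ is precisely the mechanism that makes this work: the first factor is only bounded (not small), but is multiplied by the tunable parameter $\varepsilon$, while the second factor is genuinely small thanks to the absolute-continuity estimate extracted from the $\partial_t\mathscr F$ hypothesis. Once this balance is established, both conclusions follow from the standard vector-valued compactness machinery with only book-keeping.
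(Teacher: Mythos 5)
The paper offers no proof of this lemma --- it is quoted directly from Simon \cite{Simon}, Corollary 4 --- so the comparison is with the standard Ehrling/Kolmogorov--Riesz/Arzel\`a--Ascoli argument, which is indeed the route you outline. Your part (ii) and the overall architecture are sound, but part (i) contains a genuine error: from boundedness of $\partial_t\mathscr F$ in $L^1(0,\mathcal T;Y)$ you claim a uniform modulus $\sup_{0\le t\le\mathcal T-h}\|f(t+h)-f(t)\|_Y\le\omega(h)$ with $\omega(h)\to0$ uniformly in $f\in\mathscr F$, invoking ``uniform absolute continuity of the Lebesgue integral''. Absolute continuity of the integral is a property of a single $L^1$ function; uniformity over the family is equi-integrability, which a merely bounded subset of $L^1$ need not possess. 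Concretely, take $f_n(t)=\min\{n(t-a)_+,1\}\,y_0$ for a unit vector $y_0$ and some $a\in(0,\mathcal T)$: then $\|\partial_t f_n\|_{L^1(0,\mathcal T;Y)}=1$ for all $n$, and the family satisfies all hypotheses of (i), yet $\|f_n(a+h)-f_n(a)\|_Y=1$ whenever $n\ge 1/h$, so no such $\omega$ exists. Consequently your displayed bound $\|\tau_hf-f\|_{L^p(0,\mathcal T-h;B)}\le 2\varepsilon\|f\|_{L^p(0,\mathcal T;X)}+C_\varepsilon\mathcal T^{1/p}\omega(h)$ is not justified as written.

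The gap is repairable, and the repair is where the actual proof differs from your sketch: the translation estimate must be taken in integrated, not sup, norm. Fubini gives $\int_0^{\mathcal T-h}\|f(t+h)-f(t)\|_Y\,dt\le h\,\|\partial_tf\|_{L^1(0,\mathcal T;Y)}$, while trivially $\|f(t+h)-f(t)\|_Y\le\|\partial_tf\|_{L^1(0,\mathcal T;Y)}$ for every $t$; interpolating these two bounds yields $\|\tau_hf-f\|_{L^p(0,\mathcal T-h;Y)}\le h^{1/p}\,\|\partial_tf\|_{L^1(0,\mathcal T;Y)}$, which tends to $0$ uniformly on $\mathscr F$ and is exactly what the Ehrling step needs: the $B$-translations are then at most $2\varepsilon\sup_{\mathscr F}\|f\|_{L^p(0,\mathcal T;X)}+C_\varepsilon h^{1/p}\sup_{\mathscr F}\|\partial_tf\|_{L^1(0,\mathcal T;Y)}$, and Simon's characterization of compact sets in $L^p(0,\mathcal T;B)$ closes part (i) as you intend. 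In part (ii) only a cosmetic remark: you do not need $f(t)$ to belong to $X$ (which would require some weak or weak-$*$ compactness of balls of $X$); it suffices that each $f(t)$ lies in the $B$-closure of a fixed ball of $X$, which is compact in $B$ because $X\hookrightarrow\hookrightarrow B$, after which Arzel\`a--Ascoli applies as you say.
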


The following proposition, which states the global existence of strong solutions to system with full viscosities, can be proven in the same way as in \cite{CAOLITITI2}.

\begin{proposition}\label{glo}
Let $v_0, T_0\in H^2(\Omega)$ be two periodic functions, such that
they are even and odd in $z$, respectively. Then, for any given
$\varepsilon>0$, there is a unique global strong solution $(v,T)$
to the following system
\begin{eqnarray}
&\partial_tv+(v\cdot\nabla_H)v-\left(\int_{-h}^z\nabla_H\cdot
v(x,y,\xi,t)d\xi\right)\partial_zv-\Delta_Hv-\varepsilon\partial_z^2v+f_0k\times v\nonumber\\
&+\nabla_H\left(p_s(x,y,t)-\int_{-h}^zT(x,y,\xi,t)d\xi\right)=0,\label{3.1}\\
&\int_{-h}^h\nabla_H\cdot v(x,y,\xi,t)d\xi=0,\label{3.2}\\
&\partial_tT+v\cdot\nabla_HT-\left(\int_{-h}^z\nabla_H\cdot
v(x,y,\xi,t)d\xi\right)\left(\partial_zT+\frac{1}{h}\right)-\Delta_HT -\varepsilon\partial_z^2T=0,\label{3.3}
\end{eqnarray}
subject to the boundary and initial conditions (\ref{1.8})--(\ref{1.9}), such that
\begin{eqnarray*}
&v\in L^\infty_{\text{loc}}([0,\infty); H^2(\Omega))\cap C([0,\infty);H^1(\Omega))\cap L^2_{\text{loc}}([0,\infty);H^3(\Omega)),\\
&T\in L^\infty_{\text{loc}}([0,\infty); H^2(\Omega))\cap C([0,\infty);H^1(\Omega)),\quad\nabla_HT\in L^2_{\text{loc}}([0,\infty);H^2(\Omega)),\\
&\partial_tv,\partial_tT\in L^2_{\text{loc}}([0,\infty);H^1(\Omega)).
\end{eqnarray*}
\end{proposition}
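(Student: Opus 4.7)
The plan is to follow the standard strategy for the fully viscous 3D primitive equations, adapted in \cite{CAOLITITI2}. Since the added term $\varepsilon\partial_z^2$ makes the system fully parabolic (in all three spatial directions), the only real issue is to produce a priori bounds that are uniform in time (though they are allowed to depend on $\varepsilon$). Local existence of strong solutions in $H^2$ is obtained in a routine way via a Galerkin approximation: project onto finitely many Fourier modes respecting the evenness/oddness in $z$, solve the resulting ODE system, derive $H^2$-level energy estimates on a short time interval, and pass to the limit using the Aubin--Lions compactness result (Lemma \ref{AL}). Uniqueness at the strong-solution level follows from an $L^2$ energy argument on the difference of two solutions, using the standard treatment of the nonlocal term $w = -\int_{-h}^{z}\nabla_H\cdot v\,d\xi$.

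The heart of the matter is global-in-time a priori bounds. First, I would establish the basic $L^2$ energy identity for $(v,T)$ using the hydrostatic structure and the periodicity/symmetry conditions (\ref{1.8})--(\ref{1.8-1}) to eliminate the pressure and the boundary contributions. Next I would reproduce the key Cao--Titi $L^6$ estimate for $v$: split $v=\bar v+\tilde v$ into its vertical average (barotropic) and fluctuation (baroclinic) parts, observe that $\bar v$ satisfies a 2D Navier--Stokes-type equation driven by averaged nonlinearities and by $T$, and exploit the cancellation in the energy identity for $|\tilde v|^4 \tilde v$; this gives a global bound $v\in L^\infty_t L^6_x$ in terms of the data. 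Armed with this $L^6$ bound, I would then carry out $H^1$ estimates on $v$ and $T$ by testing (\ref{3.1})--(\ref{3.3}) with $-\Delta v$ and $-\Delta T$, controlling the troublesome cubic term $\int (v\cdot\nabla_H v)\cdot\Delta v$ through the $L^6$ bound together with anisotropic Sobolev/Ladyzhenskaya inequalities (available here in the simpler full-dissipation form since $\varepsilon\partial_z^2$ is present). Finally I would upgrade to $H^2$ via energy estimates on $\partial_t v$ and $\partial_t T$, which combined with the elliptic regularity of the horizontal plus vertical Laplacian gives the claimed regularity classes for $v$ and $T$ and for $\partial_t v$, $\partial_t T$.

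The step I expect to be most delicate is the global $H^1$ estimate, because the quadratic nonlinearity $w\partial_z v$ is essentially borderline with respect to the available dissipation; it is precisely the point where one must invoke the $L^6$ bound on $v$ rather than any naive Sobolev embedding. The $H^2$ propagation, by contrast, is more technical but essentially a bootstrap from $H^1$ using differentiation of the equations and the full ellipticity of $-\Delta_H - \varepsilon\partial_z^2$. Uniqueness in this higher-regularity class is immediate from the $L^2$ difference estimate and the $H^1$ control of each solution. Since every ingredient has already been established in \cite{CAOLITITI2} under identical boundary and symmetry conditions, it suffices to note that none of those arguments uses the particular value $\varepsilon=1$, and the proposition follows by transcription of that proof with $\varepsilon$ kept explicit throughout.
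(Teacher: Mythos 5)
Your proposal is correct and is essentially the paper's own approach: the paper proves Proposition \ref{glo} simply by remarking that it "can be proven in the same way as in \cite{CAOLITITI2}", and your outline (Galerkin approximation, $L^2$ and Cao--Titi $L^6$ estimates via the barotropic--baroclinic splitting, $H^1$ then $H^2$ bootstrap using the full parabolicity of $-\Delta_H-\varepsilon\partial_z^2$, and $L^2$-difference uniqueness) is exactly that argument with $\varepsilon$ kept explicit, the constants being allowed to depend on $\varepsilon$ and on the finite time interval.
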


\section{Local well-posedness of strong solutions}\label{sec3}

In this section, we prove the local well-posedness part of
Theorem \ref{thm1}. By Proposition \ref{glo}, for any given
$\varepsilon>0$, system (\ref{3.1})--(\ref{3.3}), subject to the
boundary and initial conditions (\ref{1.8})--(\ref{1.9}), has a unique
global strong solution $(v,T)$. Next, we are going to establish
uniform in $\varepsilon$ estimates of this solution.

\begin{proposition}
  \label{lem3.1}
Let $(v,T)$ be as in Proposition \ref{glo}. Then the following holds
  $$
  \sup_{0\leq s\leq t}(\|v\|_6^2+\|T\|_6^2)(s)+\int_0^t(\|\nabla_Hv\|_2^2 +\varepsilon\|\partial_zv\|_2^2+\|\nabla_H
  T\|_2^2)(s)ds\leq K_1(t),
  $$
  for any $t\in[0,\infty)$, where $K_1$ is a continuous nondecreasing
  function on $[0,\infty)$, which depends on $\|(v_0,T_0)\|_6$ in a
  continuous manner, and is independent of $\varepsilon$.
\end{proposition}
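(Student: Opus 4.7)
My plan is to combine an $L^2$ and an $L^6$ energy estimate, obtained by testing (3.1) and (3.3) respectively with $(v,T)$ and then with $(|v|^4v,|T|^4T)$, and integrating over $\Omega$. Three cancellations of the primitive equations will drive the computation: (a) the transport $(v\cdot\nabla_H)+w\partial_z$, with $w=-\int_{-h}^z\nabla_H\cdot v\,d\xi$, integrates to zero against test functions of the form $|v|^{q-2}v$ (and analogously for $T$), since the 3D field $(v,w)$ is divergence free and $w|_{z=\pm h}=0$ (the latter via $\int_{-h}^h\nabla_H\cdot v\,dz=0$); (b) the Coriolis term $f_0k\times v$ is pointwise perpendicular to $v$ and hence to any $|v|^{q-2}v$; (c) $p_s$ is $z$-independent, so $\int_\Omega\nabla_Hp_s\cdot f$ reduces to a pairing with its vertical mean $\bar f$. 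The horizontal viscosity will yield $\|\nabla_Hv\|_2^2,\|\nabla_HT\|_2^2$ and their $\varepsilon$-weighted vertical counterparts in the $L^2$ test, and weighted variants $\int|v|^4|\nabla_Hv|^2$, $\int|T|^4|\nabla_HT|^2$ in the $L^6$ test.

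In the $L^2$ step, the pressure pairing $2h\int_M\nabla_Hp_s\cdot\bar v\,dxdy$ vanishes because $\nabla_H\cdot\bar v=0$. The buoyancy $\int_\Omega\nabla_H(\int_{-h}^zT\,d\xi)\cdot v$ and the inhomogeneous forcing $\frac{1}{h}\int_\Omega WT$, with $W=\int_{-h}^z\nabla_H\cdot v\,d\xi$, will be controlled by Cauchy--Schwarz and absorbed into the dissipation. Gronwall then gives uniform-in-$\varepsilon$ control of $\|v\|_2,\|T\|_2$ and of $\int_0^t(\|\nabla_Hv\|_2^2+\varepsilon\|\partial_zv\|_2^2+\|\nabla_HT\|_2^2)\,ds$, already supplying the integral part of the stated bound.

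The hard part is the pressure pairing in the $L^6$ step: $\int_\Omega\nabla_Hp_s\cdot|v|^4v=2h\int_M\nabla_Hp_s\cdot\overline{|v|^4v}\,dxdy$ does not vanish, because $\overline{|v|^4v}$ is not 2D divergence free. I would vertically average (3.1), apply $\nabla_H\cdot$, and use $\nabla_H\cdot\bar v=0$ to derive the 2D elliptic identity
\begin{equation*}
-\Delta_Hp_s=\nabla_H\cdot F,\quad F=\overline{(v\cdot\nabla_H)v+w\partial_zv}+f_0\,k\times\bar v-\nabla_H\overline{\int_{-h}^zT\,d\xi}.
\end{equation*}
Calder\'on--Zygmund on the 2D torus then gives $\|\nabla_Hp_s\|_{L^r(M)}\le C_r\|F\|_{L^r(M)}$ for any $r\in(1,\infty)$, and Lemmas~\ref{lad}--\ref{sob} will bound $\|F\|_{L^r(M)}$ for a suitable $r$ in terms of $\|v\|_6,\|T\|_6$ and $\|\nabla_Hv\|_2$. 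Young's inequality then absorbs a small fraction of the $L^6$ dissipation, producing a differential inequality of the form
\begin{equation*}
\frac{d}{dt}(\|v\|_6^6+\|T\|_6^6)+c\int_\Omega(|v|^4|\nabla_Hv|^2+|T|^4|\nabla_HT|^2)\,d\Omega\le\Phi(\|v\|_6,\|T\|_6),
\end{equation*}
with $\Phi$ continuous and $\varepsilon$-independent, which closes by Gronwall to yield the continuous nondecreasing function $K_1(t)$ depending continuously on $\|(v_0,T_0)\|_6$.
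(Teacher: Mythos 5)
The paper itself does not reprove this proposition: it is quoted verbatim as a corollary of Proposition 3.1 in \cite{CAOLITITI3}, so what you have written is in effect a reconstruction of that cited proof, and your outline follows the same energy-method philosophy (testing with $(v,T)$ and $(|v|^4v,|T|^4T)$, cancellation of transport, Coriolis and, at the $L^2$ level, of the pressure, anisotropic Ladyzhenskaya-type bounds as in Lemma \ref{lad}, and control of the surface pressure through the vertically averaged $2$D elliptic equation). Two points in your sketch need repair before it is a proof. First, in the elliptic identity for $p_s$ you cannot leave $F$ containing $\overline{w\,\partial_zv}$: there is no $\varepsilon$-uniform control of $\partial_zv$ at this stage, so you must first use $\partial_zw=-\nabla_H\cdot v$, $w|_{z=\pm h}=0$ to write $\overline{(v\cdot\nabla_H)v+w\partial_zv}=\nabla_H\cdot\overline{v\otimes v}$; then Calder\'on--Zygmund gives $\|p_s\|_{L^r(M)}\leq C_r\big(\|\overline{|v|^2}\|_{L^r(M)}+\|\bar v\|_{L^r(M)}+\|\overline{\textstyle\int_{-h}^z T\,d\xi}\|_{L^r(M)}\big)$, which only involves quantities controlled by $\|v\|_6$, $\|T\|_6$ after $2$D interpolation.

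Second, and more importantly, your closing display cannot have right-hand side $\Phi(\|v\|_6,\|T\|_6)$ with $\Phi$ merely continuous: any honest bookkeeping of the pressure and forcing terms leaves factors of $\|\nabla_Hv\|_2$, $\|\nabla_HT\|_2$ that are not functions of the $L^6$ norms, and if you instead absorb them by Young you get a superlinear $\Phi$, whose ODE comparison yields only a local-in-time bound, not the nondecreasing $K_1(t)$ on all of $[0,\infty)$. The correct closure keeps these factors as Gronwall coefficients: for instance the pressure contribution after integrating by parts and using the above CZ bound is
\begin{align*}
\Big|\int_\Omega p_s\,\nabla_H\cdot(|v|^4v)\,dxdydz\Big|
\leq \delta\int_\Omega|v|^4|\nabla_Hv|^2\,dxdydz
+C\big(\|v\|_6^{6}\|\nabla_Hv\|_2+\big(\|\nabla_Hv\|_2^{2}\|v\|_6^{6}\big)^{7/8}+\dots\big),
\end{align*}
so the differential inequality one actually obtains is
\begin{align*}
\frac{d}{dt}\big(\|v\|_6^6+\|T\|_6^6\big)+c\int_\Omega\big(|v|^4|\nabla_Hv|^2+|T|^4|\nabla_HT|^2\big)\,dxdydz
\leq C\big(1+\|\nabla_Hv\|_2^2+\|\nabla_HT\|_2^2\big)\big(1+\|v\|_6^6+\|T\|_6^6\big),
\end{align*}
and Gronwall closes precisely because your $L^2$ step gives $\int_0^t(\|\nabla_Hv\|_2^2+\|\nabla_HT\|_2^2)\,ds\leq K(t)$ uniformly in $\varepsilon$. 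You clearly have this $L^2$ information in hand, so the gap is fixable along the lines you set up, but as stated the final step does not deliver the global-in-time, $\varepsilon$-independent bound claimed in the proposition.
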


\begin{proof}
This is a direct corollary of Proposition 3.1 in \cite{CAOLITITI3}.
\end{proof}

Suppose $(v,T)$ be as in Proposition \ref{glo}, and set $u=\partial_zv$.  Then $u$ satisfies the equation
\begin{eqnarray}
  &\partial_tu+(v\cdot\nabla_H)u-\left(\int_{-h}^z\nabla_H\cdot v(x,y,\xi,t)d\xi\right)\partial_zu-\Delta_Hu-\varepsilon\partial_z^2u\nonumber\\
  &+f_0k\times u+(u\cdot\nabla_H)v-(\nabla_H\cdot v)u-\nabla_HT=0,\label{3.4}
\end{eqnarray}
in $\Omega\times(0,\infty)$.

The following proposition, which gives the estimates on the
vertical derivative of the velocity, $u$,
plays the key role in proving the
local existence of strong solutions to system (\ref{1.5})--(\ref{1.9}). The basic idea of proving this
proposition is the local, in space, energy inequality.

\begin{proposition}
  \label{lem3.2}
  Let $(v,T)$ be the unique global strong solution to system (\ref{3.1})--(\ref{3.3}), subject to the boundary and initial conditions (\ref{1.8})--(\ref{1.9}).

  There is a suitably small positive constant $\delta_0\leq1$, depending only on $h$, such that if
  $$
  \sup_{\textbf{x}^\text{H}\in M}\int_{-h}^h\int_{D_{2r_0}(\textbf{x}^\text{H})}|u_0(x,y,z)|^2dxdydz\leq\delta_0^2,
  $$
  for some positive number $r_0\leq1$, then the following holds true
  $$
  \sup_{0\le t\leq t_0^*}\|u\|_2^2(t)+\int_0^{t_0^*} (\|\nabla_Hu\|_2^2+\varepsilon\|\partial_zu\|_2^2)(t)dt\leq C,
  $$
  where $C$ is a constant, depending only on $\delta_0$ and $r_0$, and where $t_0^*=\min\left\{1,\frac{6r_0^2\delta_0^2}{C_0}\right\}$, with a positive constant $C_0$ depending only on $h, \delta_0$ and $r_0$.
\end{proposition}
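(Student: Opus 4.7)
The plan is to derive a local-in-space energy inequality for $u$ on an arbitrary horizontal disk of radius $2r_0$, bootstrap the smallness of the initial local $L^2$-mass of $u$ for a short time, and finally use a covering argument to recover the global bound.

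Fix an arbitrary $\mathbf{x}^{\rm H}_0 \in M$ and let $\phi=\phi(x,y)$ be a smooth cutoff supported in $D_{2r_0}(\mathbf{x}^{\rm H}_0)$, with $\phi\equiv 1$ on $D_{r_0}(\mathbf{x}^{\rm H}_0)$ and $|\nabla_H\phi|\le C/r_0$. Testing equation (\ref{3.4}) against $\phi^2 u$ and integrating over $\Omega$, the Coriolis term drops by $k\times u\cdot u=0$, and the $W\partial_z u$ term (with $W=\int_{-h}^z\nabla_H\cdot v\,d\xi$) integrates by parts in $z$ with vanishing boundary contributions thanks to $W|_{z=\pm h}=0$, which is a consequence of the hydrostatic constraint $\int_{-h}^h\nabla_H\cdot v\,d\xi=0$. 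One arrives at
\begin{equation*}
\tfrac{1}{2}\tfrac{d}{dt}\int_\Omega\phi^2|u|^2\,dV + \int_\Omega\phi^2\bigl(|\nabla_H u|^2 + \varepsilon|\partial_z u|^2\bigr)\,dV = \sum_j J_j,
\end{equation*}
where the $J_j$'s comprise the cubic ``stretching'' term $\int \phi^2|\nabla_H v||u|^2$ coming from $(u\cdot\nabla_H)v\cdot u$ together with the surviving piece of the $W$-integration; the cutoff commutators from $v\cdot\nabla_H$ and $\Delta_H$, of schematic form $(1/r_0)\int_{D_{2r_0}}\phi|v||u|^2$ and $(1/r_0)\int_{D_{2r_0}}\phi|u||\nabla_H u|$; and the coupling $\int\phi^2 u\cdot\nabla_H T$.

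The key step is to control the cubic term by the local anisotropic Lemma~\ref{lads} with the three slots filled by $\phi u,\phi u,\nabla_H v$, which yields
\begin{equation*}
\int_\Omega\phi^2|u|^2|\nabla_H v|\,dV \le Ch^{1/2}\|\phi u\|_2\Bigl(\tfrac{\|\phi u\|_2}{r_0}+\|\nabla_H(\phi u)\|_2\Bigr)\|\nabla_H v\|_2.
\end{equation*}
Young's inequality absorbs the $\|\nabla_H(\phi u)\|_2$ factor into $\tfrac{1}{2}\int\phi^2|\nabla_H u|^2$ after separating out the commutator piece $\|u\nabla_H\phi\|_2\le(C/r_0)\|u\|_{L^2(D_{2r_0})}$, leaving residuals of the form $C\|\phi u\|_2^2\|\nabla_H v\|_2^2$ together with lower-order terms involving $1/r_0^2$. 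The cutoff commutators and the $\nabla_H T$-coupling are bounded similarly via Lemmas~\ref{lads}--\ref{sob} and the a priori bounds on $\|v\|_6,\|\nabla_H v\|_2,\|\nabla_H T\|_2$ supplied by Proposition~\ref{lem3.1}, contributing at worst terms linear in $\|\phi u\|_2$ times integrable-in-time factors plus a constant forcing of size $C/r_0^2$. Writing $y(t)=\|\phi u(t)\|_2^2$ and working under the bootstrap assumption $y(t)\le 2\delta_0^2\le 2$ (so that $y^2\le 2\delta_0^2\,y$), the resulting differential inequality reads
\begin{equation*}
y'(t)+\tfrac12\|\phi\nabla_H u\|_2^2+\tfrac{\varepsilon}{2}\|\phi\partial_z u\|_2^2 \le C_1\bigl(1+\|\nabla_H v\|_2^2+\|\nabla_H T\|_2^2\bigr)\,y(t)+\tfrac{C_2}{r_0^2}\bigl(1+\|\nabla_H v\|_2^2\bigr).
\end{equation*}

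Since $y(0)\le\delta_0^2$ by hypothesis and Proposition~\ref{lem3.1} gives $\int_0^t(\|\nabla_H v\|_2^2+\|\nabla_H T\|_2^2)\,ds\le K_1(t)$, Gronwall's inequality gives
\begin{equation*}
y(t) \le \Bigl(\delta_0^2+\tfrac{C_2}{r_0^2}(t+K_1(t))\Bigr)\exp\bigl(C_1(t+K_1(t))\bigr),
\end{equation*}
and choosing $t_0^*=\min\{1,6r_0^2\delta_0^2/C_0\}$ with $C_0$ large enough in terms of $h,r_0,\delta_0,K_1(1)$ forces $y(t)\le 2\delta_0^2$ on $[0,t_0^*]$, closing the bootstrap. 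Since $\mathbf{x}^{\rm H}_0$ was arbitrary, covering $M$ by a finite family of disks of radius $r_0$ (of cardinality depending only on $r_0$) and summing the local estimates on each disk furnishes the stated global bound on $\sup_{t\le t_0^*}\|u\|_2^2+\int_0^{t_0^*}(\|\nabla_H u\|_2^2+\varepsilon\|\partial_z u\|_2^2)\,dt$. The main obstacle is precisely the cubic $\int\phi^2|\nabla_H v||u|^2$: a global-in-space energy estimate would leave a factor of $\|u\|_2^2$ that cannot be tamed without an a priori smallness, but its local-in-space counterpart replaces this with $\|\phi u\|_2^2\le 2\delta_0^2$ under the bootstrap, converting a Riccati-type obstruction into a linear Gronwall inequality at the price of the short time $t_0^*\sim r_0^2\delta_0^2$.
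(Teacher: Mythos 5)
There is a genuine gap at the heart of your argument: the trilinear estimate for the stretching term is not an instance of Lemma \ref{lads}. That lemma bounds the \emph{decoupled} quantity $\int_{D_r}\big(\int_{-h}^h|\phi|\,dz\big)\big(\int_{-h}^h|\varphi\psi|\,dz\big)\,dx\,dy$, whereas your cubic term $\int\phi^2|u|^2|\nabla_Hv|\,dV$ has all three factors evaluated at the \emph{same} $z$, and pointwise in $(x,y)$ one does not have $\int_{-h}^h fg\,dz\le\big(\int_{-h}^h f\,dz\big)\big(\int_{-h}^h g\,dz\big)$. Indeed, taking $u$ and $\nabla_Hv$ concentrated in a $z$-layer of thickness $\eta$ (normalized in $L^2$), the left-hand side of your claimed inequality grows like $\eta^{-1/2}$ while the right-hand side stays bounded, so the inequality is false for general slot functions and is never justified for the actual solution. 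To reach the decoupled structure required by Lemma \ref{lads} you must either integrate by parts in $(u\cdot\nabla_H)v\cdot u$ so that only $v$ (not $\nabla_Hv$) appears and then use $|v(z)|\le\frac{1}{2h}\int_{-h}^h|v|\,d\xi+\int_{-h}^h|u|\,d\xi$ (this is the paper's route), or majorize $|\nabla_Hv(z)|$ by $\frac{1}{2h}\int_{-h}^h|\nabla_Hv|\,d\xi+\int_{-h}^h|\nabla_Hu|\,d\xi$, which brings in $\|\nabla_Hu\|_{2,Q_{2r_0}}$ on the full (uncut) cylinder.

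This exposes the second missing mechanism. After a correct estimate the right-hand side inevitably contains a term of the form $C\delta_0\|\nabla_Hu\|_{2,Q_{2r_0}}^2$, which the local dissipation $\|\varphi\nabla_Hu\|_2^2$ (with $\varphi\equiv1$ only on $D_{r_0}$) cannot absorb; the paper absorbs it only after taking the supremum over all centers $\textbf{x}^\text{H}$, invoking the covering number $N$ of a $2r_0$-disk by $r_0$-disks, and using the smallness of $N\delta_0$. In your proposal the covering argument appears only at the very end, to sum local bounds, so this absorption step is absent. Finally, even granting your differential inequality, the bootstrap does not close as written: your forcing contains $\frac{C}{r_0^2}\|\nabla_Hv\|_2^2$, and the only uniform-in-$\varepsilon$ control available is $\int_0^t\|\nabla_Hv\|_2^2\,ds\le K_1(1)$, which is not small for small $t$; shrinking $t_0^*$ (i.e.\ enlarging $C_0$) does not make this contribution $\lesssim\delta_0^2$. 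The paper's estimates are arranged so that the forcing is a constant $C/r_0^2$, whence the time integral is $Ct/r_0^2$ and smallness does come from $t\le 6\delta_0^2 r_0^2/C_0$ alone.
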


\begin{proof}
Since $u,v$ and $T$ are periodic, they are defined on the whole
space, consequently, equation (\ref{3.4}) is satisfied on the whole space.
For any $\textbf{x}^\text{H}\in M$, set
$Q_r(\textbf{x}^\text{H})=D_r(\textbf{x}^\text{H})\times(-h,h)$. If
$\textbf{x}^\text{H}$ is the original point, we simply use the notation
$Q_r$
instead of $Q_r(0)$. Let $\delta_0\leq1$ be a sufficiently small
positive number, to be
determined later. Let $r_0\leq1$ be a small enough
positive number such that
$$
\sup_{\textbf{x}^\text{H}\in M}\int_{-h}^h\int_{D_{2r_0}(\textbf{x}^\text{H})}|u_0(x,y,z)|^2dxdydz\leq\delta_0^2.
$$
Set
$$
t_0=\sup\left\{t\in(0,1]\left|\sup_{0\leq s\leq t}\sup_{\textbf{x}^\text{H}\in M}\int_{-h}^h\int_{D_{r_0}(\textbf{x}^\text{H})}
|u(x,y,z,s)|^2dxdydz\leq8\delta_0^2\right.\right\}.
$$
Since any disk of radius $2r_0$ can be covered by finite many, say $N$, which is independent of $r_0$, disks of radius $r_0$, one has
\begin{equation}
  \sup_{0\leq t\leq t_0}\sup_{\textbf{x}^\text{H}\in M}\int_{-h}^h\int_{D_{2r_0}(\textbf{x}^\text{H})}
|u(x,y,z)|^2dxdydz\leq 8N\delta_0^2. \label{1}
\end{equation}

Consider a cut-off function $\varphi\in C_0^\infty((D_{2r_0}))$,
such that $0\leq\varphi\leq1,  |\nabla_H\varphi|\leq\frac{C}{r_0}$ on
$D_{2r_0}$, with an absolute constant $C$, and $\varphi\equiv1$ on
$D_{r_0}$. Multiplying equation (\ref{3.4}) by $u\varphi^2$ and
integrating over $Q_{2r_0}$, then it follows from
integrating by parts that
\begin{align}
\frac{1}{2}\frac{d}{dt}\int_{Q_{2r_0}}& |u|^2\varphi^2dxdydz+\int_{Q_{2r_0}}(\nabla_Hu:\nabla_H(u\varphi^2)+
  \varepsilon\partial_zu\partial_z(u\varphi^2))dxdydz\nonumber\\
  =&-\int_{Q_{2r_0}} \left[v\cdot\nabla_H\left(\frac{|u|^2}{2}\right)-\left(\int_{-h}^z
  \nabla_H\cdot
  vd\xi\right)\partial_z\left(\frac{|u|^2}{2}\right)\right]\varphi^2dxdydz
  \nonumber\\
  &+\int_{Q_{2r_0}}[(u\cdot\nabla_H)v
  -(\nabla_H\cdot v)u]\cdot u\varphi^2dxdydz\nonumber\\
  &-\int_{Q_{2r_0}}\nabla_HT\cdot u\varphi^2dxdydz=:I_1+I_2+I_3. \label{3.3-1}
  \end{align}
  Recalling that $\varphi$ is independent of $z$, it follow from the Cauchy-Schwarz inequality that
  \begin{align*}
    J:=&\int_{Q_{2r_0}}(\nabla_Hu:\nabla_H(u\varphi^2)+
  \varepsilon\partial_zu\partial_z(u\varphi^2))dxdydz\\
  =&\int_{Q_{2r_0}}[(|\nabla_Hu|^2+
  \varepsilon|\partial_zu|^2)\varphi^2+\nabla_Hu:u\otimes\nabla_H\varphi^2 ]dxdydz\\
  \geq&\int_{Q_{2r_0}}[(|\nabla_Hu|^2+
  \varepsilon|\partial_zu|^2)\varphi^2-2|\nabla_Hu||u|\varphi|\nabla_H\varphi| ]dxdydz\\
  \geq&\frac34\int_{Q_{2r_0}}(|\nabla_Hu|^2+
  \varepsilon|\partial_zu|^2)\varphi^2 dxdydz-C\int_{Q_{2r_0}}|u|^2|\nabla_H\varphi|^2dxdydz\\
  \geq&\frac34\int_{Q_{2r_0}}(|\nabla_Hu|^2+
  \varepsilon|\partial_zu|^2)\varphi^2 dxdydz-\frac{C}{r_0^2}\|u\|_{2,Q_{2r_0}}^2.
  \end{align*}
  Integration by parts, and recalling that $\varphi$ is independent of $z$, one has
  $$
  I_1=\int_{Q_{2r_0}}\frac{|u|^2}{2}v\cdot\nabla_H\varphi^2dxdydz\leq \int_{Q_{2r_0}}|u|^2|v|\varphi|\nabla_H\varphi|dxdydz,
  $$
  and
  \begin{align*}
    I_2=&-\int_{Q_{2r_0}}[(\nabla_H\cdot u)(v\cdot u)\varphi^2+(u\cdot\nabla_H)(u\varphi^2)\cdot v-v\cdot\nabla_H(|u|^2\varphi^2)]dxdydz\\
    \leq&4\int_{Q_{2r_0}}(|u||v||\nabla_Hu|\varphi^2 +|u|^2|v|\varphi|\nabla_H\varphi|)dxdydz.
  \end{align*}
  For $I_3$, it follows from integration by parts and the Cauchy-Schwarz inequality that
  \begin{align*}
    I_3=&\int_{Q_{2r_0}}T\nabla_H\cdot(u\varphi^2)dxdydz\\
    \leq&\int_{Q_{2r_0}}|T|(|\nabla_Hu|\varphi^2+2|u|\varphi| \nabla_H\varphi|)dxdydz\\
    \leq&\frac14\int_{Q_{2r_0}}|\nabla_Hu|^2\varphi^2dxdydz+C\int_{Q_{2r_0}} (|T|^2\varphi^2+|u|^2|\nabla_H\varphi|^2)dxdydz\\
    \leq&\frac14\int_{Q_{2r_0}}|\nabla_Hu|^2\varphi^2dxdydz+C\left( \|T\|_{2,Q_{2r_0}}^2+\frac{\|u\|_{2,Q_{2r_0}}^2}{r_0^2}\right).
  \end{align*}
  Thanks to the estimates on $J$, $I_1, I_2$ and $I_3$, it follows from (\ref{3.3-1}) that
  \begin{align}
  &\frac{d}{dt}\int_{Q_{2r_0}}|u|^2\varphi^2dxdydz+\int_{Q_{2r_0}}(|\nabla_H u|^2+\varepsilon|\partial_zu|^2)\varphi^2dxdydz\nonumber\\
  \leq&
  C\int_{Q_{2r_0}}(|u|^2|v|\varphi|\nabla_H\varphi|+|u||v||\nabla_Hu|\varphi^2) dxdydz
  +C\left(\|T\|_{2,Q_{2r_0}}^2+\frac{\|u\|_{2,Q_{2r_0}}^2}{r_0^2}\right), \label{2}
  \end{align}
for any $t\in(0,\infty)$.

Using the fact that
$|v(x,y,z,t)|\leq\frac{1}{2h}\int_{-h}^h|v(x,y,\xi,t) |d\xi+\int_{-h}^h|\partial_zv(x,y,\xi,t)|d\xi,$
and recalling (\ref{1}), it follows from Lemma \ref{lads}, Proposition \ref{lem3.1}, and using the Cauchy-Schwarz and Young inequalities that, for any $t\in[0,t_0]$,
\begin{align*}
  \int_{Q_{2r_0}}|u|^2|v|\varphi|&\nabla_H\varphi|dxdydz
  \leq C\int_{Q_{2r_0}}\left(\int_{-h}^h\left(\frac{|v|}{h}+|\partial_zv|\right) d\xi\right) |u|^2\varphi|\nabla_H\varphi|dxdydz\nonumber\\
  =&C\int_{D_{2r_0}}\left(\int_{-h}^h\left(\frac{|v|}{h} +|u|\right) d\xi\right)\left(\int_{-h}^h|u|^2d\xi\right)\varphi
  |\nabla_H\varphi|dxdy\nonumber\\
  \leq&\frac{C}{r_0}\left(\frac{\|v\|_{2,Q_{2r_0}}}{h}+ \|u\|_{2,Q_{2r_0}}\right)\|u\|_{2,Q_{2r_0}}\left(\frac{\|u
  \|_{2,Q_{2r_0}}}
  {r_0}+\|\nabla_Hu\|_{2,Q_{2r_0}}\right)\nonumber\\
  \leq&\frac{C}{r_0}(1+\delta_0)\delta_0\left(\frac{\delta_0}{r_0}+\|\nabla_Hu\|_{2,Q_{2r_0}}
  \right)
  \leq C\delta_0\left(\|\nabla_Hu\|_{2,Q_{2r_0}}^2+ \frac{1}{r_0^2}\right),
\end{align*}
here we have used that $\|v\|_2$ is bounded, and
\begin{align*}
\int_{Q_{2r_0}}|u||v|&|\nabla_Hu|\varphi^2dxdydz
  \leq C\int_{Q_{2r_0}}\left( \int_{-h}^h\left(\frac{|v|}{h}+|\partial_zv|\right)d\xi\right) |u||\nabla_Hu|dxdydz\nonumber\\
  =&C\int_{D_{2r_0}}\left(\int_{-h}^h\left(\frac{|v|}{h} +|u|\right)d\xi\right)\left(\int_{-h}^h|u||\nabla_Hu|d\xi
  \right)
  dxdy\nonumber\\
  \leq&C\frac{\|v\|_{6,Q_{2r_0}}}{h}\|u\|_{2,Q_{2r_0}}^{2/3}\left(\frac{\|u\|_{2,Q_{2r_0}}^{1/3}}{r_0^{1/3}}
  +\|\nabla_Hu\|_{2,Q_{2r_0}}^{1/3}\right)\|\nabla_Hu\|_{2,Q_{2r_0}}\nonumber\\
  &+C\|u\|_{2,Q_{2r_0}}\left(\frac{\|u\|_{2,Q_{2r_0}}}{r_0}+\|\nabla_Hu\|_{2,Q_{2r_0}}\right)
  \|\nabla_Hu\|_{2,Q_{2r_0}}\nonumber\\
  \leq&C\left(\frac{\delta_0}{r_0^{1/3}}
  +\delta_0^{2/3}\|\nabla_Hu\|_{2,Q_{2r_0}}^{1/3}+ \frac{\delta_0^2}{r_0} +\delta_0\|\nabla_Hu\|_{2,Q_{2r_0}}\right)\|\nabla_Hu\|_{2,Q_{2r_0}}
  \nonumber\\
  \leq&C\left(\delta_0\|\nabla_Hu\|_{2,Q_{2r_0}}^2+
  \frac{1}{r_0^2}\right).
\end{align*}

Substituting the above two inequalities into (\ref{2}), and using (\ref{1}), one obtains
\begin{align*}
  &\frac{d}{dt}\|u\varphi\|_{2,Q_{2r_0}}^2+ \|\nabla_Hu\varphi\|_{2,Q_{2r_0}}^2 +\varepsilon\|\partial_zu\varphi\|_{2,Q_{2r_0}}^2
  \leq C\delta_0\|\nabla_Hu\|_{2,Q_{2r_0}}^2+\frac{C}{r_0^2},
\end{align*}
and thus
\begin{align*}
  \sup_{0\leq s\leq t} \|u\varphi\|_{2,Q_{2r_0}}^2&+\int_0^t( \|\nabla_Hu\varphi\|_{2,Q_{2r_0}}^2 +\varepsilon\|\partial_zu\varphi\|_{2,Q_{2r_0}}^2) ds\\
  \leq& C\delta_0\int_0^t\|\nabla_Hu\|_{2,Q_{2r_0}}^2ds
  +\frac{C}{r_0}t,
\end{align*}
for any $t\in[0,t_0]$. Recalling that $\varphi\equiv1$ on $D_{r_0}$, this inequality implies
\begin{align*}
   \sup_{0\leq s\leq t}\|u\|_{2,Q_{r_0}}^2+\int_0^t( \|\nabla_Hu\|_{2,Q_{r_0}}^2+\varepsilon\|\partial_zu\|_{2,Q_{r_0}}^2)ds
  \leq C\delta_0\int_0^t\|\nabla_Hu\|_{2,Q_{2r_0}}^2ds
  +\frac{C}{r_0^2}t,
\end{align*}
for any $t\in[0,t_0]$.

Similarly, the above inequality still holds true with $Q_{r_0}$ and $Q_{2r_0}$ replaced by
$Q_{r_0}(\textbf{x}^\text{H})$ and $Q_{2r_0}(\textbf{x}^\text{H})$, respectively. As a result, using the fact that
$$
\sup_{\textbf{x}^\text{H}\in M}\int_0^t\|\nabla_Hu\|_{2,Q_{2r_0}(\textbf{x}^\text{H})}^2ds\leq N\sup_{\textbf{x}^\text{H}\in M} \int_0^t\|\nabla_Hu\|_{2,Q_{r_0}(\textbf{x}^\text{H})}^2ds,
$$
where $N$ (an absolute constant) is the the least number of the disks of radius $r_0$ that covers a disk of radius $2r_0$, we have
\begin{align*}
\sup_{\textbf{x}^\text{H}\in M}&\left[\sup_{0\leq s\leq t}\|u\|_{2,Q_{r_0}(\textbf{x}^\text{H})}^2 \right. +\left.\int_0^t(\|\nabla_Hu\|_{2,Q_{r_0}(\textbf{x}^\text{H})}^2+\varepsilon\|\partial_z
  u\|_{2,Q_{r_0}(\textbf{x}^\text{H})}^2)ds\right]\\
  \leq&CN\delta_0\sup_{\textbf{x}^\text{H}\in M}\left(\int_0^t\|\nabla_Hu\|_{2,Q_{r_0}(\textbf{x}^\text{H})}^2ds\right)+\frac{C}{r_0^2}t,
\end{align*}
and thus, recalling that $\delta_0$ is sufficiently small, there is a positive constant $C_0$ depending only on $h$, such that
\begin{equation}
  \sup_{\textbf{x}^\text{H}\in M}\left(\sup_{0\leq s\leq t}\|u\|_{2,Q_{r_0}(\textbf{x}^\text{H})}^2+
  \int_0^t(\|\nabla_Hu\|_{2,Q_{r_0}(\textbf{x}^\text{H})}^2+\varepsilon\|\partial_z
  u\|_{2,Q_{r_0}(\textbf{x}^\text{H})}^2)ds\right)\leq \frac{C_0}{r_0^2}t\leq 6\delta_0^2, \label{4}
\end{equation}
for any $t\leq\min\left\{t_0,\frac{6\delta_0^2r_0^2}{C_0}\right\}.$

Recalling the definition of $t_0$, one has $t_0\leq1$. If $t_0=1$, then (\ref{4}) holds true for any $t\leq t_0^*$, with
$$
t_0^*:=\min\left\{1,\frac{6\delta_0^2r_0^2}{C_0}\right\}.
$$
If $t_0<1$, then by the definition of $t_0$ and noticing that $u\in C([0,\infty);L^2(\Omega))$, (\ref{4}) implies that
$$
\min\left\{t_0,\frac{6\delta_0^2r_0^2}{C_2}\right\}<t_0,
$$
and thus $t_0>\frac{6\delta_0^2r_0^2}{C_0}\geq t_0^*$. Therefore,
(\ref{4}) still holds true for any $t\leq t_0^*$. In conclusion, we have
estimate (\ref{4}) for any $t\leq t_0^*$. By virtue of estimate (\ref{4}), and covering the domain $M\times(-h,h)$ by finite many $Q_{r_0}$'s, one obtains the estimate stated in the
proposition, and thus completes the proof.
\end{proof}

Now we give the estimates on the horizontal derivatives of the velocity field.

\begin{proposition}
  \label{lem3.3}
  Let $(v,T)$ be as in Proposition \ref{glo}. Then one has
  \begin{align*}
    &\frac{d}{dt}\|\nabla_Hv\|_2^2+(\|\Delta_Hv\|_2^2+\varepsilon\|\nabla_H\partial_zv\|_2^2)\\
    \leq&C(\|v\|_2^2+\|u\|_2^2+1)^2(\|\nabla_Hv\|_2^2+\|\nabla_Hu\|_2^2+1)\|\nabla_Hv\|_2^2,
  \end{align*}
  for any $t\in(0,\infty)$.
\end{proposition}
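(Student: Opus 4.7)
The plan is to perform an $H^1$ energy estimate for the horizontal gradient of $v$ by taking the $L^2(\Omega)$ inner product of equation (\ref{3.1}) with $-\Delta_H v$ and integrating by parts in the horizontal variables. Using periodicity and the $z$-parity of $v$, the dissipative terms produce $\|\Delta_H v\|_2^2+\varepsilon\|\nabla_H\partial_z v\|_2^2$ on the left-hand side. The Coriolis term $f_0\int k\times v\cdot(-\Delta_H v)$ vanishes after integration by parts thanks to the antisymmetry of $k\times\cdot$. The surface pressure term collapses to $\int_M\nabla_H p_s\cdot\Delta_H\!\int_{-h}^{h}v\,dz\,dxdy$, which vanishes after one more integration by parts because of the barotropic constraint (\ref{3.2}). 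The buoyancy piece $\int_\Omega\nabla_H\!\int_{-h}^{z}T\,d\xi\cdot\Delta_H v$ is handled by Cauchy--Schwarz and Minkowski, producing a term controlled by $\|\nabla_H T\|_2^2+\tfrac{1}{8}\|\Delta_H v\|_2^2$ (absorbed into the dissipation, with the $T$ piece eventually grouped with the parallel estimate for $T$).

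The genuinely nonlinear terms that need care are the horizontal advection $\int(v\cdot\nabla_H)v\cdot\Delta_H v$ and the vertical advection $-\int\bigl(\int_{-h}^{z}\nabla_H\!\cdot\!v\,d\xi\bigr)u\cdot\Delta_H v$ with $u=\partial_z v$. For the horizontal piece I would split by H\"older on each horizontal slice, apply two-dimensional Ladyzhenskaya to $\|\nabla_H v(\cdot,z)\|_{L^4(M)}$, and integrate in $z$, yielding
\begin{equation*}
\int_\Omega|v||\nabla_H v||\Delta_H v|\leq C\bigl(\sup_z\|v(\cdot,z)\|_{L^4(M)}\bigr)\|\nabla_H v\|_2^{1/2}\|\Delta_H v\|_2^{3/2}.
\end{equation*}
Lemma \ref{sob} then bounds $\sup_z\|v(\cdot,z)\|_{L^4(M)}$ by $C(\|v\|_2+\|u\|_2)^{1/2}(\|v\|_2+\|\nabla_H v\|_2)^{1/2}$, and Young's inequality transfers one copy of $\|\Delta_H v\|_2^2$ to the dissipation, leaving a contribution of the required form $C(\|v\|_2^2+\|u\|_2^2+1)^2(\|\nabla_H v\|_2^2+1)\|\nabla_H v\|_2^2$.

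The vertical advection is the main obstacle because $w$ is nonlocal in $z$ and carries a full horizontal derivative of $v$. I would bound it by
\begin{equation*}
\int_M\Bigl(\int_{-h}^{h}|\nabla_H v|\,dz\Bigr)\Bigl(\int_{-h}^{h}|u||\Delta_H v|\,dz\Bigr)dxdy,
\end{equation*}
and apply the second minimand of (\ref{n2.1}) from Lemma \ref{lad} with $\phi=\nabla_H v$, $\varphi=u$, $\psi=\Delta_H v$ and $S=M$. Using periodicity to identify $\|\nabla_H^2 v\|_2$ with $\|\Delta_H v\|_2$, this yields the product
\begin{equation*}
C\|\nabla_H v\|_2^{1/2}(\|\nabla_H v\|_2+\|\Delta_H v\|_2)^{1/2}\|u\|_2^{1/2}(\|u\|_2+\|\nabla_H u\|_2)^{1/2}\|\Delta_H v\|_2,
\end{equation*}
which, after expanding and applying Young's inequality twice to absorb two copies of $\tfrac{1}{8}\|\Delta_H v\|_2^2$ into the left-hand side, contributes precisely $C(\|u\|_2^2+1)^2(\|\nabla_H u\|_2^2+1)\|\nabla_H v\|_2^2$.

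Combining the horizontal and vertical advection contributions with the buoyancy estimate and absorbing all copies of $\tfrac{1}{8}\|\Delta_H v\|_2^2$ into the dissipation yields the stated differential inequality. The hard part is the estimate for the vertical advection: one must choose the correct branch of (\ref{n2.1}) so that the full horizontal second derivative lands on $\Delta_H v$ (not on $u$, where no a priori control of $\|\nabla_H^2 u\|_2$ is available), and then one must redistribute the cubic product $\nabla_H v\cdot u\cdot \Delta_H v$ via Young's inequality so that everything other than $\|\Delta_H v\|_2^2$ fits the precise polynomial structure $(\|v\|_2^2+\|u\|_2^2+1)^2(\|\nabla_H v\|_2^2+\|\nabla_H u\|_2^2+1)\|\nabla_H v\|_2^2$ on the right-hand side.
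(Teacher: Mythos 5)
Your proposal is correct and follows the paper's overall strategy: test (\ref{3.1}) with $-\Delta_Hv$, observe that the Coriolis and surface-pressure terms drop out (the latter via the constraint (\ref{3.2})), control the buoyancy term by $C\|\nabla_HT\|_2\|\Delta_Hv\|_2$, and treat the vertical advection term exactly as the paper does, namely via the second branch of (\ref{n2.1}) with $\phi=\nabla_Hv$, $\varphi=u$, $\psi=\Delta_Hv$, followed by Young's inequality. The only genuine deviation is the horizontal advection term: the paper first bounds $|v|$ pointwise by $\int_{-h}^h\left(\frac{|v|}{h}+|u|\right)d\xi$ and applies Lemma \ref{lad} once more (which is why its middle factor contains $\|\nabla_Hu\|_2^2$), whereas you slice in $z$, apply the two-dimensional Ladyzhenskaya inequality to $\nabla_Hv$ on each slice, and invoke Lemma \ref{sob} for $\sup_z\|v(\cdot,z)\|_{L^4(M)}$; this produces a bound of the same (in fact slightly better, $\|\nabla_Hu\|_2$-free) form for that term, so both routes land inside the stated right-hand side, using $\|\nabla_H^2v\|_2\leq C\|\Delta_Hv\|_2$ for periodic functions in either case. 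One bookkeeping remark: as with the paper's own intermediate inequality (\ref{3.6}), your argument leaves an extra $C\|\nabla_HT\|_2^2$ on the right-hand side that is not displayed in the proposition as stated; this is immaterial for the subsequent Gronwall argument, since $\int_0^t\|\nabla_HT\|_2^2\,ds$ is already controlled by Proposition \ref{lem3.1}, but it cannot literally be dropped, and your honest flagging of it matches what the paper's proof actually yields.
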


\begin{proof}
  Multiplying equation (\ref{3.1}) by $-\Delta_Hv$, and integrating
  over $\Omega$, it follows from integrating by parts and using Cauchy-Schwarz and Young inequalities that
  \begin{align}
    &\frac{1}{2}\frac{d}{dt}\int_\Omega |\nabla_Hv|^2dxdydz+\int_\Omega(|\Delta_Hv|^2+\varepsilon |\nabla_H\partial_zv|^2)dxdydz\nonumber\\
    =&\int_\Omega\left[(v\cdot\nabla_H)v-\left(\int_{-h}^z\nabla_H\cdot vd\xi\right)\partial_zv-\nabla_H\left(\int_{-h}^zTd\xi\right)\right]\cdot\Delta_Hvdxdydz\nonumber\\
    \leq&C\int_\Omega\left[|v||\nabla_Hv||\Delta_Hv|
    +\left(\int_{-h}^h|\nabla_Hvd\xi\right)|u||\Delta_Hv|\right]dxdydz\nonumber\\
    &+\frac{1}{4}\|\Delta_Hv\|_2^2+C\|\nabla_HT\|_2^2.\label{3.6}
  \end{align}

  Using the fact that $|\varphi(z)|\leq\frac{1}{2h} \int_{-h}^h|\varphi(\xi)|d\xi+\int_{-h}^h |\partial_z\varphi(\xi)|d\xi$, for every $z\in(-h,h)$, then by Lemma \ref{lad}, and by using the Young inequality, we have the following estimate
  \begin{align*}
    &C\int_\Omega|v||\nabla_Hv||\Delta_Hv|dxdydz\\
    \leq&C\int_M\left(\int_{-h}^h\left(\frac{|v|}{h}+|u|\right)d\xi\right)\left(\int_{-h}^h|\nabla_Hv||\Delta_Hv|d\xi\right)dxdy\\
    \leq&C\left( \frac{\|v\|_2}{h}+\|u\|_2\right)^{1/2}\left( \frac{\|v\|_2}{h} +\|u\|_2+\|\nabla_Hv\|_2+\|\nabla_H u\|_2\right)^{1/2}\\
    &\times\|\nabla_Hv\|_2^{1/2}(\|\nabla_Hv\|_2+\|\nabla_H^2v\|_2)^{1/2}\|\Delta_Hv\|_2\\
    %=&C\left( \frac{\|v\|_2}{h}+\|u\|_2\right)^{1/2}\left( %\frac{\|v\|_2}{h}+\|u\|_2+\|\nabla_Hv\|_2+\|\nabla_H %u\|_2\right)^{1/2}\\
    %&\times\|\nabla_Hv\|_2^{1/2}(\|\nabla_Hv\|_2+\|\Delta_Hv\|_2)^{1/2}\|\Delta_Hv\|_2\\
    \leq&\frac{1}{8}\|\Delta_Hv\|_2^2+C[\|\nabla_Hv\|_2^2+(\|v\|_2^2+\|u\|_2^2)(\|v\|_2^2+\|u\|_2^2\\
    &+\|\nabla_Hv\|_2^2+\|\nabla_Hu\|_2^2)\|\nabla_Hv\|_2^2]\\
    \leq&\frac{1}{8}\|\Delta_Hv\|_2^2+C(\|v\|_2^2+\|u\|_2^2+1)^2(\|\nabla_Hu\|_2^2+\|\nabla_Hv\|_2^2+1)\|\nabla_Hv\|_2^2.
  \end{align*}
  Applying Lemma \ref{lad} once again, and using the Young inequality, we have
  \begin{align*}
    &C\int_\Omega\left(\int_{-h}^h|\nabla_Hv|d\xi\right)|u||\Delta_Hv|dxdydz\\
    =&C\int_M\left(\int_{-h}^h|\nabla_Hv|d\xi\right)\left(\int_{-h}^h |u||\Delta_Hv|d\xi\right)dxdy\\
    \leq&C\|\nabla_Hv\|_2^{1/2}(\|\nabla_Hv\|_2+\|\nabla_H^2v\|_2)^{1/2}\|u\|_2^{1/2}(\|u\|_2+\|\nabla_Hu\|_2)^{1/2}\|\Delta_Hv\|_2\\
    \leq&\frac{1}{8}\|\Delta_Hv\|_2^2+C[\|\nabla_Hv\|_2^2+\|\nabla_Hv\|_2^2\|u\|_2^2(\|u\|_2^2+\|\nabla_Hu\|_2^2)]\\
    \leq&\frac{1}{8}\|\Delta_Hv\|_2^2+C(\|u\|_2^2+1)^2(\|\nabla_Hu\|_2^2+1)\|\nabla_Hv\|_2^2.
  \end{align*}

  Substituting the above two inequalities into (\ref{3.6}) yields
  \begin{align*}
    &\frac{d}{dt}\|\nabla_Hv\|_2^2+(\|\Delta_Hv\|_2^2+\varepsilon\|\nabla_H\partial_zv\|_2^2)\\
    \leq&C(\|v\|_2^2+\|u\|_2^2+1)^2(\|\nabla_Hv\|_2^2+\|\nabla_Hu\|_2^2+1)\|\nabla_Hv\|_2^2,
  \end{align*}
  proving the conclusion.
\end{proof}

Estimates on the derivatives of the temperature is stated in the following proposition.

\begin{proposition}
  \label{lem3.4}
  Let $(v,T)$ be as in Proposition \ref{glo}. Then it holds that
  \begin{align*}
  &\frac{d}{dt}\|\nabla T\|_2^2+\|\nabla_H\nabla T\|_2^2+\varepsilon\|\partial_z\nabla T\|_2^2\\
  \leq&C(\|v\|_2^2+\|\nabla v\|_2^2+1)^2(\|\nabla_Hv\|_2^2+\|\nabla_H\nabla v\|_2^2+1)(\|\nabla T\|_2^2+1),
  \end{align*}
  for any $t\in(0,\infty)$.
\end{proposition}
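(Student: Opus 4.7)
The plan is to test equation~(\ref{3.3}) against $-\Delta T$ and integrate over $\Omega$, paralleling the strategy of Proposition~\ref{lem3.3}. The time derivative yields $\frac{1}{2}\frac{d}{dt}\|\nabla T\|_2^2$, and two integrations by parts (which are boundary-free thanks to the periodicity in $x,y,z$) convert the dissipative terms into exactly $\|\nabla_H\nabla T\|_2^2+\varepsilon\|\partial_z\nabla T\|_2^2$. Indeed $\int\Delta_H T\,\Delta T=\|\Delta_H T\|_2^2+\|\nabla_H\partial_z T\|_2^2=\|\nabla_H\nabla T\|_2^2$ (the last equality holds by Plancherel on the torus) and $\int\partial_z^2 T\,\Delta T=\|\partial_z\nabla T\|_2^2$. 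Thus the identity to be estimated is
$$\frac{1}{2}\frac{d}{dt}\|\nabla T\|_2^2+\|\nabla_H\nabla T\|_2^2+\varepsilon\|\partial_z\nabla T\|_2^2 = II_1+II_2+II_3,$$
where
$$II_1=\int_\Omega v\cdot\nabla_H T\,\Delta T,\quad II_2=-\int_\Omega\Bigl(\int_{-h}^z\nabla_H\cdot v\,d\xi\Bigr)\partial_z T\,\Delta T,\quad II_3=-\tfrac{1}{h}\int_\Omega\Bigl(\int_{-h}^z\nabla_H\cdot v\,d\xi\Bigr)\Delta T.$$

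For $II_1$, I would split $\Delta T=\Delta_H T+\partial_z^2 T$. The $\Delta_H T$ piece is treated exactly as the term $\int v\cdot\nabla_H v\,\Delta_H v$ was in the proof of Proposition~\ref{lem3.3}: use the pointwise inequality $|v(\cdot,\cdot,z)|\le\frac{1}{2h}\int_{-h}^h|v|\,d\xi+\int_{-h}^h|u|\,d\xi$ and then apply Lemma~\ref{lad}, specifically~(\ref{n2.1}) with the second option of the minimum, with $\phi=|v|/h+|u|$, $\varphi=\nabla_H T$, $\psi=\Delta_H T$. The $\partial_z^2 T$ piece is integrated by parts in $z$; boundary contributions vanish because $T$ is odd and $2h$-periodic in $z$, which forces $T|_{z=\pm h}=0$ and hence $\nabla_H T|_{z=\pm h}=0$. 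After the IBP I obtain $-\int u\cdot\nabla_H T\,\partial_z T$, bounded via Lemma~\ref{lad} after applying the same pointwise trick this time to $|\nabla_H T|$, together with $\frac{1}{2}\int(\nabla_H\cdot v)(\partial_z T)^2$ (produced by a further IBP in $x,y$), which is controlled via Lemma~\ref{sob} combined with the (dimensionless) 2D Ladyzhenskaya inequality.

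For $II_2$, I again split $\Delta T=\Delta_H T+\partial_z^2 T$. The $\Delta_H T$ contribution is handled by~(\ref{n2.1}) with $\phi=|\nabla_H v|$, $\varphi=\partial_z T$, $\psi=\Delta_H T$. The $\partial_z^2 T$ contribution is integrated by parts in $z$; the boundary term at $z=h$ vanishes by the constraint~(\ref{3.2}), $\int_{-h}^h\nabla_H\cdot v\,d\xi=0$, while the volume term becomes $\frac{1}{2}\int(\nabla_H\cdot v)(\partial_z T)^2$, already estimated above. The linear term $II_3$ splits analogously: the $\Delta_H T$ part is sent by IBP in $x,y$ to $\frac{1}{h}\int\bigl(\int_{-h}^z\nabla_H\nabla_H\cdot v\,d\xi\bigr)\cdot\nabla_H T$, bounded via Minkowski by $C\|\nabla_H\nabla v\|_2\|\nabla_H T\|_2$; and the $\partial_z^2 T$ part reduces, by IBP in $z$ (boundary vanishing again by (\ref{3.2})), to $\frac{1}{h}\int(\nabla_H\cdot v)\partial_z T$, bounded by $C\|\nabla_H v\|_2\|\partial_z T\|_2$.

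The main bookkeeping obstacle is to collect the various cubic bounds and, using Young's inequality, absorb small multiples of $\|\nabla_H\nabla T\|_2^2$ (and where needed $\varepsilon\|\partial_z\nabla T\|_2^2$) into the left-hand side so that every remaining term fits the template $(\|v\|_2^2+\|\nabla v\|_2^2+1)^2(\|\nabla_H v\|_2^2+\|\nabla_H\nabla v\|_2^2+1)(\|\nabla T\|_2^2+1)$. This requires only $\|u\|_2\le\|\nabla v\|_2$, $\|\nabla_H u\|_2\le\|\nabla_H\nabla v\|_2$, $\|\partial_z T\|_2^2+\|\nabla_H T\|_2^2\le\|\nabla T\|_2^2$, and the elementary bound $ab\le\tfrac{1}{2}(a^2+b^2)$, applied repeatedly; no new inequalities beyond those stated in Section~\ref{sec2} are needed.
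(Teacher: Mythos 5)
Your proposal is correct and follows essentially the same route as the paper: test (\ref{3.3}) with $-\Delta T$, split $\Delta T=\Delta_HT+\partial_z^2T$, integrate by parts in $z$ (using $T|_{z=\pm h}=0$ and the constraint (\ref{3.2})) so that no uncontrolled $\partial_z^2T$ survives, and close with the anisotropic estimates of Lemma \ref{lad} and Young's inequality. The only cosmetic deviation is that you push the extra horizontal integration by parts to produce $\tfrac12\int(\nabla_H\cdot v)|\partial_zT|^2$ and bound it by slicewise Ladyzhenskaya plus Lemma \ref{sob} applied to $\nabla_Hv$ (not to $\partial_zT$, which would require the unavailable $\partial_z^2T$), whereas the paper keeps this contribution as $2v\cdot\nabla_H\partial_zT\,\partial_zT$ and uses Lemma \ref{lad}; both yield the same admissible bound, and none of your terms actually needs absorption into $\varepsilon\|\partial_z\nabla T\|_2^2$, so the constant stays independent of $\varepsilon$.
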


\begin{proof}
  Multiplying equation (\ref{3.3}) by $-\Delta T$ and integrating over $\Omega$, then it follows from integrating by parts that
  \begin{align}
    &\frac{1}{2}\frac{d}{dt}\int_\Omega|\nabla T|^2dxdydz+\int_\Omega(|\nabla_H\nabla T|^2+\varepsilon|\partial_z\nabla T|^2)dxdydz\nonumber\\
    =&\int_\Omega\left[v\cdot\nabla_HT-\left(\int_{-h}^z\nabla_H\cdot vd\xi\right)\left(\partial_zT+\frac1h\right)\right]\Delta Tdxdydz\nonumber\\
    =&\int_\Omega\left[v\cdot\nabla_HT-\left(\int_{-h}^z\nabla_H\cdot vd\xi\right)\left(\partial_zT+\frac1h\right)\right]\Delta_H Tdxdydz\nonumber\\
    &+\int_\Omega\left[v\cdot\nabla_HT-\left(\int_{-h}^z\nabla_H\cdot vd\xi\right)\left(\partial_zT+\frac1h\right)\right]\partial_z^2 Tdxdydz\nonumber\\
    =&\int_\Omega\left[v\cdot\nabla_HT-\left(\int_{-h}^z\nabla_H\cdot vd\xi\right)\left(\partial_zT+\frac1h\right)\right]\Delta_H Tdxdydz\nonumber\\
    &-\int_\Omega\left[(\partial_zv\cdot\nabla_HT+v\cdot\nabla_H\partial_zT)\partial_zT-\frac{1}{2}
    (\nabla_H\cdot v)|\partial_zT|^2+\frac{\nabla_H\cdot v}{h}\partial_zT\right]dxdydz\nonumber\\
    =&\int_\Omega\left[v\cdot\nabla_HT-\left(\int_{-h}^z\nabla_H\cdot vd\xi\right)\left(\partial_zT+\frac1h\right)\right]\Delta_H Tdxdydz\nonumber\\
    &-\int_\Omega(u\cdot\nabla_HT\partial_zT+2v\cdot\nabla_H\partial_zT \partial_zT+\frac1h\nabla_H\cdot v\partial_zT)dxdydz\nonumber\\
    \leq&\int_\Omega\left[v\cdot\nabla_HT-\left(\int_{-h}^z\nabla_H\cdot vd\xi\right)\left(\partial_zT+\frac1h\right)\right]\Delta_H Tdxdydz\nonumber\\
    &-\int_\Omega(u\cdot\nabla_HT\partial_zT+2v\cdot\nabla_H\partial_zT \partial_zT)dxdydz+C\|\nabla_Hv\|_2\|\partial_zT\|_2.\label{3.7}
  \end{align}
  Using the fact that $|v(z)|\leq\frac{1}{2h}\int_{-h}^h|v(\xi)|d\xi +\int_{-h}^h|\partial_zv(\xi)|d\xi$, for all $z\in(-h,h)$, it follows from Lemma \ref{lad}, and using the Young inequality that
  \begin{align*}
    &\left|\int_\Omega(v\cdot\nabla_HT\Delta_HT-2v\cdot\nabla_H\partial_zT\partial_zT)dxdydz\right|\\
    \leq&C\int_M\left(\int_{-h}^h\left(\frac{|v|}{h}+|u|\right) d\xi\right)\left(\int_{-h}^h|\nabla T||\nabla_H\nabla T|d\xi\right)dxdy\\
    \leq&C\left(\frac{\|v\|_2}{h}+\|u\|_2 \right)^{1/2}\left(\frac{\|v\|_2}{h} +\|u\|_2+\|\nabla_Hv\|_2+\|\nabla_Hu\|_2\right)^{1/2}\\
    &\times\|\nabla T\|_2^{1/2}(\|\nabla T\|_2+\|\nabla_H\nabla T\|_2)^{1/2}\|\nabla_H\nabla T\|_2\\
    \leq&\frac{1}{6}\|\nabla_H\nabla T\|_2^2+C[\|\nabla T\|_2^2+(\|v\|_2^2+\|u\|_2^2)(\|v\|_2^2+\|u\|_2^2\\
    &+\|\nabla_Hv\|_2^2+\|\nabla_Hu\|_2^2)\|\nabla T\|_2^2]\\
    \leq&\frac{1}{6}\|\nabla_H\nabla T\|_2^2+C(\|v\|_2^2+\|u\|_2^2+1)^2(\|\nabla_Hv\|_2^2+\|\nabla_Hu\|_2^2+1)\|\nabla T\|_2^2.
  \end{align*}
  Recalling that $T$ is odd and periodic in $z$, one has
  $T(x,y,-h,t)=-T(x,y,h,t)=-T(x,y,-h,t)$, and thus $T|_{z=-h,h}=0$,
  which implies $\nabla_HT|_{z=-h,h}=0$. Thanks to this fact, it
  follows $|\nabla_HT(z)|\leq\int_{-h}^h|\nabla_H\partial_zT|d\xi$, for
  every $z\in(-h,h)$.
  Using this inequality, it follows from Lemma \ref{lad} and the Young
  inequality that
  \begin{align*}
    &\left|\int_\Omega u\cdot\nabla_HT\partial_zTdxdydz\right|\\
    \leq&C\int_M\left(\int_{-h}^h|\nabla_H\partial_zT|d\xi\right)\left(\int_{-h}^h|u||\partial_zT|d\xi\right)dxdy\\
    \leq&C\|\nabla_H\partial_zT\|_2\|u\|_2^{1/2}(\|u\|_2+\|\nabla_H u\|_2)^{1/2}\|\partial_zT\|_2^{1/2}(\|\partial_zT\|_2+\|\nabla_H\partial_zT\|_2)^{1/2}\\
    \leq&\frac{1}{6}\|\nabla_H\partial_zT\|_2^2+C[\|\partial_zT\|_2^2+\|u\|_2^2(\|u\|_2^2+\|\nabla_Hu\|_2^2)\|\partial_zT\|_2^2]\\
    \leq&\frac{1}{6}\|\nabla_H\partial_zT\|_2^2+C(\|u\|_2^2+1)^2(\|\nabla_Hu\|_2^2+1)\|\partial_zT\|_2^2.
  \end{align*}
  Applying Lemma \ref{lad} once again, and using the Young inequality yields
  \begin{align*}
    &\left|\int_\Omega\left(\int_{-h}^z\nabla_H\cdot vd\xi\right)\partial_zT\Delta_HTdxdydz\right|\\
    \leq&C\int_M\left(\int_{-h}^h|\nabla_Hv|d\xi\right)\left(\int_{-h}^h|\partial_zT||\Delta_HT|d\xi\right)dxdy\\
    \leq&C\|\nabla_Hv\|_2^{1/2}(\|\nabla_Hv\|_2+\|\nabla_H^2v\|_2)^{1/2}\|\partial_zT\|_2^{1/2}(\|\partial_zT\|_2
    +\|\nabla_H\partial_zT\|_2)^{1/2}\|\Delta_HT\|_2\\
    \leq&\frac{1}{6}\|\nabla_H\nabla T\|_2^2+C[\|\partial_zT\|_2^2+\|\nabla_Hv\|_2^2(\|\nabla_Hv\|_2^2+\|\Delta_Hv\|_2^2)\|\partial_zT\|_2^2]\\
    \leq&\frac{1}{6}\|\nabla_H\nabla T\|_2^2+C(\|\nabla_Hv\|_2^2+1)^2(\|\Delta_Hv\|_2^2+1)\|\partial_zT\|_2^2.
  \end{align*}
  Substituting these inequalities into (\ref{3.7}) leads to
  \begin{align*}
  &\frac{d}{dt}\|\nabla T\|_2^2+\|\nabla_H\nabla T\|_2^2 +\varepsilon\|\partial_z\nabla T\|_2^2\\
  \leq&C(\|v\|_2^2+\|\nabla v\|_2^2+1)^2(\|\nabla_Hv\|_2^2+\|\nabla_H\nabla v\|_2^2+1)(\|\nabla T\|_2^2+1),
  \end{align*}
  completing the proof.
\end{proof}

\begin{proposition}
  \label{lem3.5}
  Let $(v,T)$ be as in Proposition \ref{glo}.
Then we have
  \begin{align*}
  \|\partial_tv\|_2^2+\|\partial_tT\|_2^2\leq& C[\varepsilon^2(\|\partial_zu\|_2^2+\|\partial_z^2T\|_2^2)+(\|v\|_{H^1}^2+\|T
  \|_{H^1}^2+1)^2\\
  &\times(\|\nabla_Hv\|_{H^1}^2+\|\nabla_HT\|_{H^1}^2+1)],
  \end{align*}
  for any $t\in(0,\infty)$.
\end{proposition}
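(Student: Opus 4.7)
The plan is to solve \eqref{3.1} and \eqref{3.3} explicitly for $\partial_t v$ and $\partial_t T$, take $L^2(\Omega)$ norms of the two resulting identities, and control each term separately. The easy contributions are immediate: the vertical dissipations $\varepsilon\partial_z^2 v = \varepsilon\partial_z u$ and $\varepsilon\partial_z^2 T$ produce exactly the $\varepsilon^2(\|\partial_z u\|_2^2 + \|\partial_z^2 T\|_2^2)$ piece of the stated bound; the Coriolis term, $\Delta_H v$, $\Delta_H T$, and the hydrostatic contribution $\nabla_H\int_{-h}^z T\,d\xi$ (controlled by $\sqrt{2h}\,\|\nabla_H T\|_2$ via Jensen) are all dominated by the $H^1$ norms appearing on the right-hand side of the statement.

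The nonlinear terms $(v\cdot\nabla_H)v$, $\bigl(\int_{-h}^z\nabla_H\cdot v\,d\xi\bigr)\partial_z v$, $v\cdot\nabla_H T$, and $\bigl(\int_{-h}^z\nabla_H\cdot v\,d\xi\bigr)(\partial_z T + 1/h)$ will be handled exactly as in the proofs of Propositions \ref{lem3.3} and \ref{lem3.4}. The plan is to use the pointwise vertical Sobolev inequality $|v(x,y,z)| \leq \frac{1}{2h}\int_{-h}^h|v|\,d\xi + \int_{-h}^h|u|\,d\xi$ on the first (pre-integrated) factor of each product, collapsing the integrand into the form $\int_M\bigl(\int_{-h}^h|\phi|^2d\xi\bigr)\bigl(\int_{-h}^h|\varphi|^2dz\bigr)\,dxdy$, and then apply Lemma \ref{lad} together with Young's inequality. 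Tracking the resulting powers of $\|v\|_2$, $\|u\|_2$, $\|\nabla_H v\|_2$, $\|\nabla_H u\|_2$, $\|\nabla_H^2 v\|_2$ and the analogous $T$ norms, every term will be controlled by $C(\|v\|_{H^1}^2 + \|T\|_{H^1}^2 + 1)^2(\|\nabla_H v\|_{H^1}^2 + \|\nabla_H T\|_{H^1}^2 + 1)$, matching the statement.

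I expect the main obstacle to be the surface pressure gradient $\nabla_H p_s$, for which no pointwise information is directly available. The remedy is standard: averaging \eqref{3.1} over $z\in(-h,h)$, equation \eqref{3.2} shows that $\bar v := \frac{1}{2h}\int_{-h}^h v\,dz$ satisfies $\nabla_H\cdot\bar v = 0$, and the even-in-$z$ periodic parity of $v$ makes $\overline{\varepsilon\partial_z^2 v}$ vanish. Applying the 2D Leray projection $\mathbb{P}$ on $M$ to the averaged equation isolates $\nabla_H p_s$ as $(I - \mathbb{P})$ acting on the vertical averages of the non-pressure terms; combining the $L^2(M)$ boundedness of $\mathbb{P}$ with the identity $\|\nabla_H p_s\|_{L^2(\Omega)}^2 = 2h\|\nabla_H p_s\|_{L^2(M)}^2$ (since $p_s$ is $z$-independent) and the Jensen bound $\|\overline{f}\|_{L^2(M)} \leq (2h)^{-1/2}\|f\|_{L^2(\Omega)}$ yields $\|\nabla_H p_s\|_{L^2(\Omega)} \leq C\|G\|_{L^2(\Omega)}$, where $G$ collects the non-pressure terms already estimated. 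This absorbs the pressure contribution into the same bound and completes the proof.
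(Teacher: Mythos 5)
Your proposal is correct and follows essentially the same route as the paper: isolate the non-pressure forcing terms, estimate them via Lemma \ref{lad} together with Sobolev and Young inequalities (exactly as in Propositions \ref{lem3.3} and \ref{lem3.4}), and recover $\nabla_H p_s$ in $L^2$ from the vertically averaged momentum equation. The only cosmetic difference is that the paper gets the pressure bound by taking the horizontal divergence of the averaged equation, solving $\Delta_H p_s=\frac{1}{2h}\int_{-h}^h\nabla_H\cdot f_1\,d\xi$ with elliptic estimates, which is equivalent to your use of the Leray projector $(I-\mathbb{P})$ on $M$.
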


\begin{proof}
Define functions $f_1$ and $f_2$ as
\begin{align*}
  f_1=&-(v\cdot\nabla_H)v+\left(\int_{-h}^z\nabla_H\cdot v(x,y,\xi,t)d\xi\right)\partial_zv\\
  &-f_0k\times v+\nabla_H\left(\int_{-h}^zT(x,y,\xi,t)d\xi\right)
\end{align*}
and
\begin{equation*}
  f_2=v\cdot\nabla_HT+\left(\int_{-h}^z\nabla_H\cdot v(x,y,\xi,t)d\xi\right)\left(\partial_zT+\frac1h\right).
\end{equation*}

Applying Lemma \ref{lad}, and using the Sobolev and Young inequalities, we have the following estimates
\begin{align*}
  \|f_2\|_2^2\leq & C\int_\Omega\left[|v|^2|\nabla_HT|^2+ \left(\int_{-h}^h|\nabla_Hv|d\xi\right)^2(1+|\partial_zT|^2)\right]
  dxdydz\\
  \leq&C(\|v\|_4^2\|\nabla_HT\|_4^2+\|\nabla_Hv\|_2^2) +C\int_M\left(\int_{-h}^h|\nabla_Hv|^2d\xi\right)\left(\int_{-h}^h
  |\partial_zT|^2d\xi\right)dxdy\\
  \leq&C(\|v\|_{H^1}^2\|\nabla_HT\|_{H^1}^2+\|\nabla_Hv\|_2^2)\\
  &+C\|\nabla_H
  v\|_2(\|\nabla_Hv\|_2+\|\nabla_H^2v\|_2)\|\partial_zT\|_2(\|\partial_zT\|_2+\|\nabla_H
  \partial_zT\|_2)\\
  \leq&C(\|v\|_{H^1}^2\|\nabla_HT\|_{H^1}^2+\|v\|_{H^1}^2) +C[\|v\|_{H^1}^2(\|v\|_{H^1}^2+\|\nabla_Hv
  \|_{H^1}^2)\\
  &+\|T\|_{H^1}^2(\|T\|_{H^1}^2+\|\nabla_HT\|_{H^1}^2)]\\
  \leq&C(\|v\|_{H^1}^2+\|T\|_{H^1}^2+1)^2(\|\nabla_Hv\|_{H^1}^2+\|\nabla_HT\|_{H^1}^2+1),
\end{align*}
and similarly
\begin{align*}
  \|f_1\|_2^2\leq&C(\|v\|_{H^1}^2+1)^2(\|\nabla_Hv\|_{H^1}^2+1)+C(\|v\|_2^2+\|\nabla_HT\|_2^2)\\
  \leq&C(\|v\|_{H^1}^2+\|T\|_{H^1}^2+1)^2(\|\nabla_Hv\|_{H^1}^2+\|\nabla_HT\|_{H^1}^2+1).
\end{align*}

Note that $v$ and $T$ satisfies
\begin{eqnarray}
  &&\partial_tv-\Delta_Hv-\varepsilon\partial_z^2v+\nabla_Hp_s(x,y,t)=f_1,\label{n1}\\
  &&\int_{-h}^h\nabla_H\cdot v(x,y,\xi,t)d\xi=0,\label{n2}\\
  &&\partial_tT-\Delta_HT-\varepsilon\partial_z^2T=f_2.\label{n3}
\end{eqnarray}
By (\ref{n3}), we have
\begin{align*}
  &\|\partial_tT\|_2^2\leq\|\Delta_HT\|_2^2+\varepsilon^2\|\partial_z^2T\|_2^2 +\|f_2\|_2^2\\
  \leq&C(\|v\|_{H^1}^2+\|T\|_{H^1}^2+1)^2(\|\nabla_Hv\|_{H^1}^2 +\|\nabla_HT\|_{H^1}^2+1)+\varepsilon^2\|\partial_z^2T\|_2^2.
\end{align*}
By the aid of (\ref{n1}) and (\ref{n2}), one can easily see that
$$
\Delta_Hp_s(x,y,t)=\frac{1}{2h}\int_{-h}^h\nabla_H\cdot f_1(x,y,\xi,t)d\xi,
$$
and thus, by the elliptic estimates, one obtains
\begin{align*}
  \|\nabla_Hp_s\|_{L^2(M)}^2\leq&C\left\|\int_{-h}^hf_1(x,y,\xi,t)d\xi\right\|_{L^2(M)}^2\leq C\|f_1\|_2^2\\
  \leq&C(\|v\|_{H^1}^2+\|T\|_{H^1}^2+1)^2(\|\nabla_Hv\|_{H^1}^2+\|\nabla_HT\|_{H^1}^2+1).
\end{align*}
On account of this, it follows from (\ref{n1}) that
\begin{align*}
  \|\partial_tv\|_2^2\leq&C(\|\Delta_Hv\|_2^2+\varepsilon^2 \|\partial_z^2v\|_2^2+\|\nabla_Hp_s\|_2^2+\|f_1\|_2^2)\\
  \leq&C[\varepsilon^2\|\partial_zu\|_2^2+(\|v\|_{H^1}^2+\|T\|_{H^1}^2+1)^2(\|\nabla_Hv\|_{H^1}^2+\|\nabla_HT\|_{H^1}^2+1)].
\end{align*}
Therefore, we have
\begin{align*}
  \|\partial_tv\|_2^2+\|\partial_tT\|_2^2\leq& C[\varepsilon^2(\|\partial_zu\|_2^2+\|\partial_z^2T\|_2^2)+(\|v\|_{H^1}^2+\|T
  \|_{H^1}^2+1)^2\\
  &\times(\|\nabla_Hv\|_{H^1}^2+\|\nabla_HT\|_{H^1}^2+1)],
  \end{align*}
completing the proof.
\end{proof}

By the aid of these propositions, we are now ready to give the proof of local well-posedness part of Theorem \ref{thm1}.

\begin{proof}[\textbf{Proof of local well-posedness}] Consider the
periodic functions $v_0^\varepsilon,
T_0^\varepsilon\in H^2(\Omega)$, such that they are even and odd in $z$, respectively, and
$$
(v_0^\varepsilon,T_0^\varepsilon)\rightarrow(v_0,T_0),\quad\mbox{ as }\varepsilon\rightarrow0,\quad\mbox{in }H^1(\Omega).
$$
It is obvious that $u_0^\varepsilon\rightarrow u_0$, as $\varepsilon\rightarrow0$, in $L^2(\Omega)$, where $u_0^\varepsilon=\partial_zv_0^\varepsilon$ and $u_0=\partial_zv_0$.

Let $\delta_0$ be the constant sated in Proposition \ref{lem3.2}. By the absolutely continuity of the integral, there is a positive number $r_0\leq1$, such that
$$
\sup_{\textbf{x}^\text{H}\in M}\int_{-h}^h\int_{D_{2r_0}(\textbf{x}^\text{H})} |u_0 (x,y,z)|^2dxdydz\leq\frac{\delta_0^2}{2}.
$$
Thus, there exits $\varepsilon_0>0$, depending on $\delta_0$, such that for every $\varepsilon\in(0,\varepsilon_0)$, one has
$$
\sup_{\textbf{x}^\text{H}\in M}\int_{-h}^h\int_{D_{2r_0}(\textbf{x}^\text{H})}|u_0^\varepsilon(x,y,z)|^2dxdydz\leq\delta_0^2.
$$

For any $\varepsilon\in(0,\varepsilon_0)$, let
$(v_\varepsilon,T_\varepsilon)$ be the unique strong solution
corresponding to the initial data $(v_0^\varepsilon, T_0^\varepsilon)$
as stated by Proposition \ref{glo}. Set
$u_\varepsilon=\partial_zv_\varepsilon$. By Proposition \ref{lem3.1} and Proposition \ref{lem3.2}, we have the estimate
\begin{align}
\sup_{0\leq t\leq t_0^*}&(\|v_\varepsilon\|_2^2+\|T_\varepsilon\|_2^2+\|u_\varepsilon\|_2^2)+\int_0^{t_0^*}
(\|\nabla_Hv_\varepsilon\|_2^2\nonumber\\
&+\varepsilon|\partial_zv_\varepsilon|^2+\|\nabla_HT_\varepsilon\|_2^2+\|\nabla_Hu_\varepsilon\|_2^2+\varepsilon
\|\partial_zu_\varepsilon\|_2^2)ds\leq C, \label{3.13-1}
\end{align}
where $C$ is a positive constant and $t_0^*$ is the same constant as in
Proposition \ref{lem3.2}, both depend on $\delta_0$, but are independent of $\varepsilon\in(0,\varepsilon_0)$. On account of the above estimate, by using the Gronwall inequality, one can easily obtain from Proposition \ref{lem3.3}, Proposition \ref{lem3.4} and Proposition \ref{lem3.5} that
\begin{align*}
\sup_{0\leq t\leq t_0^*}&(\|\nabla_Hv_\varepsilon\|_2^2+\|\nabla T_\varepsilon\|_2^2)+\int_0^{t_0^*}(\|\Delta_Hv_\varepsilon\|_2^2\\
  &+\varepsilon\|\nabla_H\partial_zv_\varepsilon\|_2^2+\|\nabla_H\nabla T_\varepsilon\|_2^2
  +\|\partial_tv_\varepsilon\|_2^2+\|\partial_tT_\varepsilon\|_2^2)\leq C,
\end{align*}
which, combined with (\ref{3.13-1}), gives
\begin{align}
\sup_{0\leq t\leq t_0^*}&(\|v_\varepsilon\|_{H^1}^2+\| T_\varepsilon\|_{H^1}^2)+\int_0^{t_0^*}(\|\nabla_Hv_\varepsilon\|_{H^1}^2 \nonumber\\
  &+\varepsilon\|\partial_zv_\varepsilon\|_{H^1}^2+\|\nabla_H T_\varepsilon\|_{H^1}^2
  +\|\partial_tv_\varepsilon\|_2^2+\|\partial_tT_\varepsilon\|_2^2)\leq C,
  \label{estuniform}
\end{align}
for a positive constant $C$, which depends on $\delta_0$, but is
independent of $\varepsilon\in(0,\varepsilon_0)$. Thanks to the above
estimate, by the Aubin-Lions lemma, i.e. Lemma \ref{AL}, there is a
subsequence, still denoted by $(v_\varepsilon,T_\varepsilon)$, and
$(v,T)$, such that as $\varepsilon\rightarrow0$ one has
\begin{eqnarray*}
  &(v_\varepsilon, T_\varepsilon)\rightarrow(v,T),\quad\mbox{ in }C([0,t_0^*];L^2(\Omega)),\\
  &\varepsilon\partial_zv_\varepsilon\rightarrow0,\quad\mbox{ in }L^\infty(0,t_0^*;L^2(\Omega)),\\
  &(v_\varepsilon,T_\varepsilon)\overset{*}{\rightharpoonup}(v,T),\quad\mbox{ in }L^\infty(0,t_0^*;H^1(\Omega)),\\
  &(\nabla_Hv_\varepsilon,\nabla_HT_\varepsilon)\rightharpoonup(\nabla_Hv,\nabla_HT),\quad\mbox{ in }L^2(0,t_0^*;H^1(\Omega)),\\
  &(\partial_tv_\varepsilon,\partial_tT_\varepsilon)\rightharpoonup(\partial_tv,\partial_tT),\quad
  \mbox{ in }L^2(0,t_0^*;L^2(\Omega)),
\end{eqnarray*}
where $\rightharpoonup$ and $\overset{*}{\rightharpoonup}$ are the weak convergence and weak-$*$ convergence, respectively. Thanks to these convergences, noticing that
\begin{eqnarray*}
  &\left(\int_{-h}^z\nabla_Hv_\varepsilon d\xi\right)\partial_zv_\varepsilon= \partial_z\left(\left(\int_{-h}^z\nabla_H\cdot v_\varepsilon d\xi\right) v_\varepsilon\right)-(\nabla_H\cdot v_\varepsilon) v_\varepsilon,\\
  &\left(\int_{-h}^z\nabla_Hv_\varepsilon d\xi\right)\partial_zT_\varepsilon= \partial_z\left(\left(\int_{-h}^z\nabla_H\cdot v_\varepsilon d\xi\right) T_\varepsilon\right)-(\nabla_H\cdot v_\varepsilon) T_\varepsilon,
\end{eqnarray*}
one can take the limit $\varepsilon\rightarrow0$, at the level of the
subsequence, to system (\ref{3.1})--(\ref{3.3}) to conclude that $(v,T)$ is a strong solution to system (\ref{1.5})--(\ref{1.9}) on $\Omega\times(0,t_0^*)$.

Recalling the regularity properties of the strong solutions, the uniqueness of strong solutions to system (\ref{1.5})--(\ref{1.9}) is a direct corollary of Proposition 2.4 of \cite{CAOLITITI2}. This completes the proof of the local well-posedness part of Theorem \ref{thm1}.
\end{proof}

\section{Global existence of strong solutions}
\label{sec4}
In this section, we prove that if the initial data $(v_0, T_0)\in H^1$
has the additional regularity that $\partial_z v_0\in L^m(\Omega)$, for some $m\in(2,\infty)$, and $(v_0, T_0)\in L^\infty(\Omega)$, then the local strong solution established in the previous section can be extended to be a global one, in other words, we will prove the global existence part of Theorem \ref{thm1}.

Checking the proof in the previous section, to prove the global
existence of strong solutions to system (\ref{1.5})--(\ref{1.9}), it
suffices to establish estimate (\ref{estuniform}), for any finite
time interval $[0,\mathcal T)$,
for the global strong solution $(v,T)$ to system
(\ref{3.1})--(\ref{3.3}), subject to (\ref{1.8})--(\ref{1.9}).
Moreover, by Propositions \ref{lem3.1}, \ref{lem3.4} and \ref{lem3.5},
to get such an estimate, we only need to prove the
$L^\infty(0,\mathcal T; L^2(\Omega))$ estimate for $\partial_zv$,
for any finite time interval $[0,\mathcal T)$.

The following proposition is a straightforward corollary of Proposition 3.1 in \cite{CAOLITITI3}.

\begin{proposition}
  \label{prop5.1}
Let $(v,T)$ be as in Proposition \ref{glo}.
Then, for any finite time $\mathcal T$, we have the estimate
$$
\sup_{0\leq t\leq\mathcal T}\sup_{2\leq q<\infty}\frac{\|v\|_q}{\sqrt q} \leq C,
$$
where $C$ is a positive constant depending only on $h, \mathcal T$ and  $\|(v_0,T_0)\|_\infty$, but is independent of $\varepsilon$.
\end{proposition}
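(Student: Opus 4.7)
The strategy is to verify that the $L^q$ energy argument already carried out in Proposition 3.1 of \cite{CAOLITITI3} for the system without vertical viscosity extends verbatim to the regularized system (\ref{3.1})--(\ref{3.3}), with constants independent of $\varepsilon$. The crucial observation is that the added vertical viscosity terms $-\varepsilon\partial_z^2v$ and $-\varepsilon\partial_z^2T$ contribute only nonnegative quantities to the $L^q$ energy identities, and therefore only help.

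Concretely, I would test the momentum equation (\ref{3.1}) with $|v|^{q-2}v$ and integrate over $\Omega$. The advective terms $(v\cdot\nabla_H)v$ and $-\bigl(\int_{-h}^z\nabla_H\cdot v\,d\xi\bigr)\partial_zv$ combine into $(v\cdot\nabla_H+w\partial_z)v$, which upon integration by parts vanishes thanks to $\nabla_H\cdot v+\partial_zw=0$, the periodicity in $x,y$, and the vanishing of $w$ at $z=\pm h$ guaranteed by the barotropic constraint (\ref{3.2}); the Coriolis term vanishes pointwise since $(k\times v)\cdot v=0$. The dissipation yields the coercive contribution
$$
(q-1)\int_\Omega|v|^{q-2}\bigl(|\nabla_Hv|^2+\varepsilon|\partial_zv|^2\bigr)\,dxdydz=\frac{4(q-1)}{q^2}\Bigl(\|\nabla_H|v|^{q/2}\|_2^2+\varepsilon\|\partial_z|v|^{q/2}\|_2^2\Bigr),
$$
so that the $\varepsilon$-piece sits on the good side of the inequality. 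For the pressure $\pi=p_s-\int_{-h}^zT\,d\xi$, the baroclinic part is controlled through a maximum principle applied to (\ref{3.3}), which gives $\|T(t)\|_\infty\leq\|T_0\|_\infty$ uniformly in $\varepsilon$ (since $-\varepsilon\partial_z^2T$ is again favorable in the $L^\infty$ test), while the barotropic piece $\nabla_Hp_s$ is treated via the barotropic--baroclinic decomposition $v=\bar v+\tilde v$, with $\bar v=\frac{1}{2h}\int_{-h}^hv\,dz$, so that $\nabla_Hp_s$ is paired with a vertically averaged, essentially two-dimensional, object.

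The most delicate point, and the reason the estimate is $\|v(t)\|_q\lesssim\sqrt q$ rather than a generic polynomial in $q$, is to keep the $q$-dependence of the constants sharp. This requires the use of anisotropic Sobolev embeddings whose constants scale like $q^{1/2}$, together with a Gronwall step designed so that the final bound passes to the supremum in $q\in[2,\infty)$. Precisely this bookkeeping is what is accomplished in Proposition 3.1 of \cite{CAOLITITI3}. The content of the present proposition therefore reduces to verifying that each step of that argument remains valid in the presence of the favorable vertical viscosity, which produces the desired bound with a constant depending only on $h$, $\mathcal T$, and $\|(v_0,T_0)\|_\infty$, and independent of $\varepsilon$.
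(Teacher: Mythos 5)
Your proposal takes essentially the same route as the paper: the paper simply records this proposition as a direct consequence of Proposition 3.1 in \cite{CAOLITITI3}, the point being exactly what you observe, namely that the extra terms $-\varepsilon\partial_z^2v$ and $-\varepsilon\partial_z^2T$ enter the $L^q$ and $L^\infty$ energy identities with a favorable sign, so all constants remain independent of $\varepsilon$. Your sketch of the underlying $L^q$ argument (maximum principle for $T$, vanishing of the advective and Coriolis contributions, and the $\sqrt q$ bookkeeping) matches the cited proof, so no further comment is needed.
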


\begin{proposition}
  \label{prop5.2}
Let $(v,T)$ be as in Proposition \ref{glo}, and set $u:=\partial_zv$.
Then, for any finite time $\mathcal T$, we have the following estimates
$$
  \frac{d}{dt}\|u\|_q^q+\int_\Omega|u|^{q-2}(|\nabla_Hu|^2+\varepsilon |\partial_zu|^2)dxdydz\leq C(1+\|v\|_\infty^2)(1+\|u\|_q^q),
$$
and
\begin{align*}
  &\frac{d}{dt}\|\nabla_Hv\|_2^2+\|\Delta_Hv\|_2^2+\varepsilon\|\nabla_H\partial_z v\|_2^2\\
\leq& C\|v\|_\infty^2\|\nabla_Hv\|_2^2+C(\|u\|_r^{ \frac{4r}{r-2}}+\|\nabla_HT\|_2^2+1),
\end{align*}
for any $q\in[2,\infty), r\in(2,\infty)$ and any $t\in(0,\mathcal T)$, where $C$ is a positive constant depending only on $q, r, h, \mathcal T, \|v_0\|_2$ and $\|T_0\|_\infty$, but is independent of $\varepsilon$.
\end{proposition}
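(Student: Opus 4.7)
My plan is to prove the two inequalities by testing the equation (\ref{3.4}) for $u=\partial_zv$ against $|u|^{q-2}u$ and the momentum equation (\ref{3.1}) against $-\Delta_Hv$, exploiting the fact that $(v\cdot\nabla_H)-\bigl(\int_{-h}^z\nabla_H\cdot v\,d\xi\bigr)\partial_z$ is transport by a three-dimensional divergence-free field. For the first inequality I multiply (\ref{3.4}) by $|u|^{q-2}u$ and integrate over $\Omega$. The transport annihilates after integration by parts, the dissipation terms produce $\int_\Omega|u|^{q-2}(|\nabla_Hu|^2+\varepsilon|\partial_zu|^2)\,dx$ (plus a nonnegative $(q-2)$-correction), and the Coriolis contribution vanishes pointwise since $k\times u\perp u$. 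The stretching terms $(u\cdot\nabla_H)v-(\nabla_H\cdot v)u$ tested against $|u|^{q-2}u$ are, after an integration by parts moving the horizontal derivatives off $v$, bounded by $Cq\|v\|_\infty\int|u|^{q-1}|\nabla_Hu|\,dx$, which by Cauchy--Schwarz and Young gives $\tfrac12\int|u|^{q-2}|\nabla_Hu|^2\,dx+C\|v\|_\infty^2\|u\|_q^q$. The $-\nabla_HT$ term is handled by integrating by parts and using the maximum-principle bound on $\|T\|_\infty$ (depending on $\|T_0\|_\infty$, $h$, $\mathcal T$, $\|v_0\|_2$), contributing $\tfrac14\int|u|^{q-2}|\nabla_Hu|^2\,dx+C(1+\|u\|_q^q)$ after Hölder and Young. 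Collecting these yields the first inequality.

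For the second inequality I test (\ref{3.1}) against $-\Delta_Hv$, which produces the dissipation $\|\Delta_Hv\|_2^2+\varepsilon\|\nabla_H\partial_zv\|_2^2$ on the left. Coriolis vanishes by skew-symmetry. The surface-pressure term reduces, upon integrating in $z$, to $\int_M\nabla_Hp_s\cdot\Delta_H\bar v\,dx_H$ with $\bar v=\int_{-h}^hv\,dz$, and equals $-\int_M p_s\,\Delta_H(\nabla_H\cdot\bar v)\,dx_H=0$ by (\ref{1.6}). The baroclinic contribution yields $\tfrac18\|\Delta_Hv\|_2^2+C\|\nabla_HT\|_2^2$ via Cauchy--Schwarz, and the horizontal-transport term $(v\cdot\nabla_H)v\cdot\Delta_Hv$ is controlled by $\|v\|_\infty\|\nabla_Hv\|_2\|\Delta_Hv\|_2\le\tfrac18\|\Delta_Hv\|_2^2+C\|v\|_\infty^2\|\nabla_Hv\|_2^2$, supplying the $\|v\|_\infty^2\|\nabla_Hv\|_2^2$ on the right.

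The hard part is the vertical-transport integral $I:=\int_\Omega wu\cdot\Delta_Hv\,dx$ with $w=-\int_{-h}^z\nabla_H\cdot v\,d\xi$, since the absence of vertical viscosity forbids any estimate involving $\partial_zu$ and forces us to exploit the $L^r$ hypothesis with $r>2$. I would apply Cauchy--Schwarz to reduce to $\|wu\|_2\|\Delta_Hv\|_2$ and bound $\|wu\|_2^2$ anisotropically: combining $|w(x_H,z)|\le\int_{-h}^h|\nabla_Hv(x_H,\xi)|\,d\xi$ with Hölder in $z$ yields the pointwise-in-$x_H$ estimate $\int_{-h}^h|wu|^2\,dz\le C\bigl(\int_{-h}^h|\nabla_Hv|^2\,dz\bigr)\bigl(\int_{-h}^h|u|^r\,dz\bigr)^{2/r}$, and Hölder in $x_H$ with conjugate exponents $(r/(r-2),r/2)$ reduces the $M$-integral to $\|F\|_{L^{2r/(r-2)}(M)}^2\|u\|_r^2$ where $F(x_H):=(\int_{-h}^h|\nabla_Hv|^2\,dz)^{1/2}$. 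The 2D Gagliardo--Nirenberg inequality applied to $F$, together with $\|\nabla_HF\|_{L^2(M)}\le\|\nabla_H^2v\|_2\le C\|\Delta_Hv\|_2$ (elliptic regularity on the torus), gives $\|wu\|_2^2\le C\|u\|_r^2\|\nabla_Hv\|_2^{2(r-2)/r}(\|\nabla_Hv\|_2+\|\Delta_Hv\|_2)^{4/r}$. Expanding the last factor and applying Young's inequality twice absorbs the $\|\Delta_Hv\|_2$-powers into $\tfrac18\|\Delta_Hv\|_2^2$ and leaves terms of the shape $C\|u\|_r^2\|\nabla_Hv\|_2^2$ and $C\|u\|_r^{2r/(r-2)}\|\nabla_Hv\|_2^2$; the Poincaré-type interpolation $\|\nabla_Hv\|_2^2\le C\|v\|_2\|\Delta_Hv\|_2$, combined with the energy bound $\|v\|_2\le C$ from Proposition~\ref{lem3.1} and one further Young step, converts these into an additional $\epsilon\|\Delta_Hv\|_2^2$ plus $C\|u\|_r^{4r/(r-2)}+C$. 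This vertical-transport bound is the real bottleneck of the argument, and it is precisely the reason why the $L^r$ hypothesis on $u$ with $r>2$ is indispensable for the subsequent global-existence proof.
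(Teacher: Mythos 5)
Your proposal is correct and follows essentially the same strategy as the paper: test the momentum equation with $-\Delta_Hv$, kill the pressure and Coriolis terms, and control the vertical-transport integral by an anisotropic H\"older argument plus the two-dimensional Gagliardo--Nirenberg and Young inequalities together with the uniform bound on $\|v\|_2$ from Proposition \ref{lem3.1}, which produces exactly the $\|u\|_r^{\frac{4r}{r-2}}$ power. The only differences are cosmetic: the paper simply cites Proposition 4.1(i) of \cite{CAOLITITI3} for the first inequality (your sketch is the standard proof of it, with the $\|T\|_\infty$ bound coming from the maximum principle for the untranslated temperature), and it organizes the key term via H\"older--Minkowski with slicewise Gagliardo--Nirenberg interpolating $\|\nabla_Hv\|_{\frac{2r}{r-2},M}$ between $\|v\|_{2,M}$ and $\|\Delta_Hv\|_{2,M}$, whereas you apply Cauchy--Schwarz first and interpolate the $z$-integrated function $F$, compensating with the extra step $\|\nabla_Hv\|_2^2\leq\|v\|_2\|\Delta_Hv\|_2$ -- both routes yield the stated estimate.
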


\begin{proof}
The first conclusion has been proved in Proposition 4.1 (i) of \cite{CAOLITITI3}. We now prove the second one.
Multiplying equation (\ref{3.1}) by $-\Delta_Hv$, and integrating over $\Omega$, then it follows from integrating by parts that
\begin{align*}
  &\frac{1}{2}\frac{d}{dt}\int_\Omega|\nabla_Hv|^2dxdydz+\int_\Omega\Big(
  |\Delta_Hv|^2+\varepsilon|\nabla_H\partial_zv|^2\Big)dxdydz\nonumber\\
  =&\int_\Omega\left[(v\cdot\nabla_H)v-\left(\int_{-h}^z\nabla_H\cdot vd\xi\right)u-\nabla_H\left(\int_{-h}^zTd\xi\right)\right]\cdot\Delta_Hvdxdydz\nonumber\\
  \leq&C(\|v\|_\infty\|\nabla_Hv\|_2+\|\nabla_HT\|_2\|)\Delta_Hv\|_2
  +\int_M\int_{-h}^h|\nabla_Hv|dz\int_{-h}^h|u||\Delta_Hv|dzdxdy.
\end{align*}
Recalling that $\sup_{0\leq t\leq\mathcal T}\|v\|_2^2\leq C$, guaranteed by Proposition \ref{lem3.1}, it follows from the H\"older, Minkowski and Gagliardo-Nirenberg inequalities that
\begin{align*}
  &\int_M\int_{-h}^h|\nabla_Hv|dz\int_{-h}^h|u||\Delta_Hv|dzdxdy\\
  \leq&\int_M\int_{-h}^h|\nabla_Hv|dz\left(\int_{-h}^h|u|^2dz\right)^{\frac{1}{2}} \left(\int_{-h}^h|\Delta_Hv|^2dz\right)^{\frac12}dxdy\\
  \leq&\left[\int_M\left(\int_{-h}^h|\nabla_Hv|dz\right)^{\frac{2r} {r-2}}dxdy\right] ^{\frac{r-2}{2r}}\left[\int_M\left(\int_{-h}^h|u|^2dz\right)^{\frac{r}{2}} dxdy\right]^{\frac{1}{r}}\|\Delta_Hv\|_2\\
  \leq&\int_{-h}^h\|\nabla_Hv\|_{\frac{2r}{r-2}, M}dz\left(\int_{-h}^h\|u\|_{r,M}^2dz\right)^{\frac12}\|\Delta_Hv\|_2\\
  \leq&C\int_{-h}^h\|v\|_{2,M}^{\frac12-\frac1r}(\|v\|_{2,M}+\|\Delta_Hv\|_{2,M}) ^{\frac12+\frac1r} dz\|u\|_r\|\Delta_Hv\|_2\\
  \leq&C(\|v\|_2+\|v\|_2^{\frac12-\frac1r}\|\Delta_Hv\|_2^{\frac12+\frac1r}) \|u\|_r\|\Delta_Hv\|_2\\
  \leq&C(1+\|\Delta_Hv\|_2^{\frac12+\frac1r}) \|u\|_r\|\Delta_Hv\|_2.
\end{align*}
Substitute the above inequality into the previous one and using the Young inequality, we then get
\begin{align*}
  &\frac{1}{2}\frac{d}{dt}\|\nabla_Hv\|_2^2+\|\Delta_Hv\|_2^2 +\varepsilon\|\nabla_H \partial_zv\|_2^2\\
  \leq&C(\|v\|_\infty\|\nabla_Hv\|_2+\|\nabla_HT\|_2\|)\Delta_Hv\|_2 +C(1+\|\Delta_Hv\|_2^{\frac12+\frac1r}) \|u\|_r\|\Delta_Hv\|_2\\
  \leq&\frac 12 \|\Delta_Hv\|_2^2+C(\|v\|_\infty^2\|\nabla_Hv\|_2^2+\|u\|_r^{ \frac{4r}{r-2}}+\|\nabla_HT\|_2^2+1),
\end{align*}
which implies the conclusion.
\end{proof}

Thanks to the above two propositions, we can apply the logarithmic type Sobolev embedding inequality (Lemma \ref{logsob}) and the logarithmic type Gronwall inequality (Lemma \ref{loggron}) to derive the $L^\infty(0,\mathcal T; L^2(\Omega))$ estimate on $\nabla v$.

\begin{proposition}
  \label{prop5.3}
  Let $(v,T)$ be as in Proposition \ref{glo}, and let $m\in(2,\infty)$.
  Then, for any finite time $\mathcal T$, we have
  $$
  \sup_{0\leq t\leq\mathcal T}(\|\nabla v\|_2^2+\|\partial_zv\|_m^m)+\int_0^\mathcal T(\|\nabla_H\nabla v\|_2^2+\varepsilon\|\partial_z\nabla v\|_2^2)dt\leq C,
  $$
  for a positive constant $C$ depending only on $m, h, \mathcal T$ and $\|v_0\|_{H^1}+\|\partial_zv_0\|_m+\|(v_0,T_0)\|_\infty$, but is independent of $\varepsilon$.
\end{proposition}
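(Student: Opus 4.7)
The plan is to derive a single differential inequality for
\[
A(t) := \|\nabla_H v(t)\|_2^2 + \|u(t)\|_2^2 + \|u(t)\|_m^m + 1,
\]
with parabolic dissipation
\[
B(t) := \|\Delta_H v\|_2^2 + \|\nabla_H u\|_2^2 + \int_\Omega|u|^{m-2}|\nabla_H u|^2\,dxdydz + \varepsilon\bigl(\|\nabla_H\partial_z v\|_2^2 + \|\partial_z u\|_2^2 + \int_\Omega|u|^{m-2}|\partial_z u|^2\,dxdydz\bigr),
\]
and then to close it via the logarithmic Gronwall inequality (Lemma~\ref{loggron}). Summing the first estimate of Proposition~\ref{prop5.2} taken with both $q=2$ and $q=m$, together with the second estimate taken with $r=m$, yields
\[
A'(t) + B(t) \le C(1+\|v\|_\infty^2)A(t) + C\|u\|_m^{\frac{4m}{m-2}} + C\|\nabla_H T\|_2^2 + C,
\]
with constants independent of $\varepsilon$. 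The integral $\int_0^{\mathcal T}\|\nabla_H T\|_2^2\,dt$ is controlled by the basic energy estimate underlying Proposition~\ref{prop5.1} and thus serves as an integrable source $f(t)$.

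The next step is to dispose of $\|u\|_m^{\frac{4m}{m-2}}$. In the regime $m\ge 6$ one has $\tfrac{4}{m-2}\le 1$, so that Young's inequality gives $\|u\|_m^{4m/(m-2)} = (\|u\|_m^m)^{4/(m-2)} \le \|u\|_m^m+1 \le A(t)$, which folds into the linear-in-$A$ term. In the super-critical regime $m\in(2,6)$ the exponent exceeds $1$; noting that
\[
\int_\Omega|u|^{m-2}|\nabla_H u|^2\,dxdydz = \tfrac{4}{m^2}\|\nabla_H|u|^{m/2}\|_2^2 \le B(t),
\]
I would apply the anisotropic interpolation of Lemmas~\ref{lad}--\ref{sob} slice-wise to $|u|^{m/2}$ (combining 2D Ladyzhenskaya in the $(x,y)$-slices with the $L^\infty_z L^2_{xy}$ Sobolev trace) to derive an estimate of the form $\|u\|_m^{4m/(m-2)} \le \tfrac12 B(t) + C(A(t)+1)^{\kappa}$ for some $\kappa=\kappa(m)>0$. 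The polynomial prefactor $A^\kappa$ is tolerable since Lemma~\ref{loggron} only requires a linear occurrence of $A$ times a logarithm of $B$.

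For $\|v\|_\infty^2$ I would invoke the logarithmic anisotropic Sobolev inequality of Lemma~\ref{logsob} with $p_1=p_2=4$, $p_3=m$ (the admissibility condition $\tfrac14+\tfrac14+\tfrac1m<1$ is exactly $m>2$) and $\lambda=\tfrac12$. Proposition~\ref{prop5.1} controls the prefactor $\sup_{r\ge 2}\|v\|_r/\sqrt r$, leaving
\[
\|v\|_\infty^2 \le C\log\bigl(\|v\|_4+\|\nabla_H v\|_4+\|v\|_m+\|u\|_m+e\bigr).
\]
The norms $\|v\|_4$ and $\|v\|_m$ are uniformly bounded by Proposition~\ref{prop5.1}, $\|u\|_m\le A(t)^{1/m}$, and $\|\nabla_H v\|_4$ can be bounded by $C(A(t)+B(t))$ using the 2D Ladyzhenskaya inequality on horizontal slices together with the $L^\infty_z L^2_{xy}$ estimate of Lemma~\ref{sob} applied to $\nabla_H v$ (which brings in $\|\partial_z\nabla_H v\|_2=\|\nabla_H u\|_2\le B(t)^{1/2}$). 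Consequently $\|v\|_\infty^2 \le C\log(A(t)+B(t)+e)+C$.

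Plugging this back produces an inequality of the form
\[
A'(t)+\tfrac12 B(t) \le K A(t)\log(B(t)+e)+f(t),
\]
with $K\ge 1$ and $f\in L^1(0,\mathcal T)$, to which Lemma~\ref{loggron} applies directly to give $\sup_{[0,\mathcal T]}A(t)+\int_0^{\mathcal T}B(s)\,ds \le C$, uniformly in $\varepsilon$; this is the asserted bound, since $\|\nabla v\|_2^2+\|u\|_m^m\le A(t)$ and $\|\nabla_H\nabla v\|_2^2+\varepsilon\|\partial_z\nabla v\|_2^2 \le C B(t)$. The principal technical obstacle is the super-critical absorption in the range $m\in(2,6)$: one must exploit the $q=m$ dissipation $\|\nabla_H|u|^{m/2}\|_2^2$ while keeping the polynomial dependence on $A(t)$ precisely under control, in such a way that the logarithmic Gronwall lemma still closes the estimate.
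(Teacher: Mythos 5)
Your setup (summing the two estimates of Proposition \ref{prop5.2} with $q=2,m$ and $r=m$, bounding $\|v\|_\infty^2$ logarithmically via Proposition \ref{prop5.1} and Lemma \ref{logsob}, and closing with Lemma \ref{loggron}) is the same strategy as the paper, and your treatment of $m\ge 6$ and of the log bound on $\|v\|_\infty^2$ (with exponents $(4,4,m)$ instead of the paper's $(6,6,2)$) is fine. The genuine gap is the range $m\in(2,6)$, i.e. the absorption of $\|u\|_m^{\frac{4m}{m-2}}=\bigl(\|u\|_m^m\bigr)^{\lambda}$ with $\lambda=\frac{4}{m-2}>1$. Your proposed interpolation cannot work: this term involves no integrability of $u$ beyond $L^m$, so the dissipation $\|\nabla_H|u|^{m/2}\|_2^2$ buys nothing — by scaling $u\mapsto cu$ the left side grows like $c^{m\lambda}$ while $\tfrac12 B+C(A+1)^\kappa$ with $\kappa\le1$ grows only like $c^{m}$, so any inequality of the form $\|u\|_m^{4m/(m-2)}\le\tfrac12 B+C(A+1)^\kappa$ forces $\kappa\ge\lambda>1$. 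And a superlinear source $C(A+1)^\kappa$, $\kappa>1$, is \emph{not} tolerable for Lemma \ref{loggron}: its hypothesis is exactly $A'+B\le KA\log B+f$ with $f$ integrable and independent of $A$; a term $A^\kappa$ cannot be folded into $KA\log B$ (one would need $A^{\kappa-1}\lesssim\log B$, which is unavailable) nor into $f$, and generically such a Riccati-type term yields only local-in-time control, which defeats the purpose of the proposition.

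The paper closes this step with a different, purely algebraic device that you are missing: it puts the supercritical power into the unknown itself. Setting $A_1=\|u\|_2^2+\|u\|_m^m+e$, $A_2=\|\nabla_Hv\|_2^2+e$, one multiplies the inequality $A_1'+B_1\le C(1+\|v\|_\infty^2)A_1$ by $1+\lambda A_1^{\lambda-1}$ and adds it to the $\nabla_Hv$-inequality; since $\frac{d}{dt}A_1^\lambda=\lambda A_1^{\lambda-1}A_1'$, the term $CA_1^\lambda$ on the right is now \emph{linear} in the new functional $A=A_1+A_1^\lambda+A_2$, and the combined inequality reads $A'+B\le C(1+\|v\|_\infty^2)A+C\|\nabla_HT\|_2^2$, to which the logarithmic estimate on $\|v\|_\infty^2$ and Lemma \ref{loggron} apply for every $m\in(2,\infty)$. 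Incorporating this modification (and dropping the interpolation step) would repair your argument; as written, it only proves the proposition for $m\ge6$.
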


\begin{proof}
  Given $\mathcal T\in(0,\infty)$. Set $u=\partial_zv$, and define
  \begin{eqnarray*}
    &&A_1(t)=\|u(t)\|_2^2+\|u(t)\|_m^m+e,\quad B_1(t)=\|\nabla_Hu(t)\|_2^2+\varepsilon\|\partial_zu\|_2^2+e,\\
    &&A_2(t)=\|\nabla_Hv(t)\|_2^2+e,~\qquad\qquad B_2(t)=\|\Delta_Hv(t)\|_2^2+\varepsilon\|\partial_z\nabla_Hv\|_2^2+e.
  \end{eqnarray*}
  By Proposition \ref{prop5.2}, we have
  \begin{eqnarray*}
    &&\frac{d}{dt}A_1(t)+B_1(t)\leq C(1+\|v(t)\|_\infty^2)A_1(t),\\
    &&\frac{d}{dt}A_2(t)+B_2(t)\leq C\|v(t)\|_\infty^2A_2(t)+CA_1^\lambda(t)+C\|\nabla_HT(t)\|_2^2,
  \end{eqnarray*}
  for any $t\in(0,\mathcal T)$, where $\lambda=\frac{4}{m-2}$, and $C$ is a positive constant depending only on $m, h, \mathcal T, \|v_0\|_2$ and $\|T_0\|_\infty$, but independent of $\varepsilon$.

  Multiplying the first inequality by $1+\lambda A_1^{\lambda-1}(t)$, and summing the resulting inequality with the second one yields
  \begin{align*}
\frac{d}{dt}(A_1(t)&+A_1^\lambda(t)+A_2(t))+(1+\lambda A_1^{\lambda-1}(t))B_1(t)+B_2(t)\\
    \leq&C(1+\|v\|_\infty^2)(A_1(t)+A_1^\lambda(t)+A_2(t))+C\|\nabla_HT(t)\|_2^2,
  \end{align*}
  for any $t\in(0,\mathcal T)$, where $C$ is a positive constant depending only on $m, h, \mathcal T, \|v_0\|_2$ and $\|T_0\|_\infty$, and is independent of $\varepsilon$.

  Summing both sides of the above inequality with $A_1(t)+A_1^\lambda(t)+A_2(t)$, and setting
  \begin{eqnarray*}
    &&A(t)=A_1(t)+A_1^\lambda(t)+A_2(t),\quad B(t)=A_1(t)+B_1(t)+B_2(t),\\
    &&g(t)=1+\|v(t)\|_\infty^2,\qquad f(t)=C\|\nabla_HT(t)\|_2^2,
  \end{eqnarray*}
  we have
  \begin{equation*}
    \frac{d}{dt}A(t)+B(t)\leq Cg(t)A(t)+f(t).
  \end{equation*}
We are going to show that
\begin{equation}\label{N5.3}
g(t)\leq C\log(e+B(t)),
\end{equation}
and thus
\begin{equation*}
    \frac{d}{dt}A(t)+B(t)\leq K A(t)\log(e+B(t))+f(t),
\end{equation*}
for a positive constant $K$ depending only on $h,\mathcal T$ and $\|v_0\|_\infty+\|T_0\|_\infty$.
Noticing that $\|f\|_{L^1((0,\mathcal T))}\leq C$, for a constant $C$ depending only on $h,\mathcal T$ and $\|v_0\|_2+\|T_0\|_2$, the conclusion follows from the logarithmic type Gronwall inequality, i.e. Lemma \ref{loggron}.

We still need to verify (\ref{N5.3}). By Proposition \ref{prop5.1}, Lemma \ref{logsob} and the Sobolev and Poincar\'e inequalities, we have
\begin{align*}
  \|v(t)\|_\infty^2\leq&C\max\left\{1, \sup_{q\geq2}\frac{\|v\|_q^2}{q}\right\} \log(e+\|\nabla_Hv\|_6+\|v\|_6+\|u\|_2+\|v\|_2)\\
  \leq&C\log(e+\|\nabla_Hv\|_{H^1}+\|v\|_{H^1}+\|u\|_2)\\
  \leq&C\log(e+\|\nabla_Hv\|_2+\|\nabla\nabla_Hv\|_2+\|u\|_2)\\
  \leq&C\log(e+\|\Delta_Hv\|_2+\|\nabla_Hu\|_2+\|u\|_2)\leq C\log(e+B(t)),
\end{align*}
verifying (\ref{N5.3}). This completes the proof.
\end{proof}

We are now ready to prove the global existence part of Theorem \ref{thm1}.

\begin{proof}[\textbf{Proof of global existence}] Let $j_\varepsilon$ be the standard modifier, and set $v_0^\varepsilon=v_0*j_\varepsilon$ and $T_0^\varepsilon=T_0*j_\varepsilon$. Then we have
\begin{eqnarray*}
  &(v_0^\varepsilon, T_0^\varepsilon)\rightarrow(v_0, T_0),\mbox{ in }H^1(\Omega),\quad\mbox{and}\quad\partial_zv_0^\varepsilon \rightarrow\partial_zv_0,\mbox{ in }L^m(\Omega),\\
  &\|v_0^\varepsilon\|_{H^1}\leq\|v_0\|_{H^1},\quad \|\partial_zv_0^\varepsilon\|_m\leq\|\partial_zv_0\|_m, \quad\mbox{and}\quad\|v_0^\varepsilon\|_\infty\leq\|v_0\|_\infty,\\
  &\|T_0^\varepsilon\|_{H^1}\leq\|T_0\|_{H^1}, \quad\mbox{and}\quad\|T_0^\varepsilon\|_\infty\leq\|T_0\|_\infty.
\end{eqnarray*}
Let $(v_\varepsilon, T_\varepsilon)$ be the unique global strong solution to system (\ref{3.1})--(\ref{3.3}), subject to (\ref{1.8})--(\ref{1.9}), with initial data $(v_0^\varepsilon, T_0^\varepsilon)$, as stated in Proposition \ref{glo}.

By Proposition \ref{prop5.3}, for any $\mathcal T\in(0,\infty)$, there is a positive constant $C$ depending only on $h, \mathcal T$ and $\|v_0\|_{H^1}+\|\partial_zv_0\|_m+\|v_0\|_\infty+\|T_0\|_\infty$, but independent of $\varepsilon$, such that
$$
\sup_{0\leq t\leq\mathcal T}(\|\nabla_Hv_\varepsilon\|_2+\|\partial_zv_\varepsilon\|_2^2+\|\partial_z v_\varepsilon\|_m^m)+\int_0^\mathcal T(\|\nabla\nabla_Hv_\varepsilon\|_2^2+\varepsilon\|\partial_z\nabla v_\varepsilon\|_2^2)dt\leq C.
$$
Thanks to this estimates, by Propositions \ref{lem3.1}, \ref{lem3.4} and \ref{lem3.5}, it follows from the Gronwall inequality that
\begin{align*}
  &\sup_{0\leq t\leq\mathcal T}(\|v_\varepsilon\|_{H^1}^2+\| T_\varepsilon\|_{H^1}^2)+\int_0^{\mathcal T}(\|\nabla_Hv_\varepsilon\|_{H^1}^2 +\|\nabla_H T_\varepsilon\|_{H^1}^2
  +\|\partial_tv_\varepsilon\|_2^2+\|\partial_tT_\varepsilon\|_2^2)\leq C,
\end{align*}
from which, the same argument as in the proof of local well-posedness at the end of section \ref{sec3} yields the global existence of strong solutions.
This completes the proof.
\end{proof}

\section{Appendix: a logarithmic Sobolev inequality}

In this appendix, we prove a logarithmic Sobolev inequality for anisotropic Sobolev functions, that is the following:

\begin{lemma}
  \label{lemapp}
Let $\textbf{p}=(p_1, p_2,\cdots, p_N)$, with $p_i\in(1,\infty)$, and
$$
\sum_{i=1}^N\frac{1}{p_i}<1.
$$
Then we have
\begin{align*}
  \|F\|_{L^\infty(\mathbb R^N)}\leq& C_{N,\textbf{p},\lambda}\max\left\{1,\sup_{r\geq2} \frac{\|F\|_{L^r(\mathbb R^N)}}{r^\lambda}\right\}\\
  &\times\log^\lambda
  \left(\sum_{i=1}^N(\|F\|_{L^{p_i}(\mathbb R^N)}+\|\partial_iF\|_{L^{p_i}(\mathbb R^N)})+e\right),
\end{align*}
for any $\lambda>0$.
\end{lemma}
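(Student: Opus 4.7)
The plan is to mimic the classical Fourier-analytic Brezis--Gallouet--Wainger argument in the anisotropic setting. By density I may assume $F\in\mathcal S(\mathbb R^N)$. Fix a smooth bump $\chi\in C_c^\infty(\mathbb R)$ with $\chi\equiv 1$ on $[-1,1]$ and $\mathrm{supp}\,\chi\subset[-2,2]$, and for a threshold $R\geq e$ to be chosen later set $\Psi_R(\xi)=\prod_{i=1}^N\chi(\xi_i/R)$. Split $F=F_R+(F-F_R)$ with $F_R=\Psi_R(D)F$. The strategy is to bound the low-frequency piece by $\|F\|_r$ with $r$ tuned logarithmically in $R$, and the high-frequency piece by $W^{1,\mathbf p}$ with a power-law gain in $R$, then optimize.

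For the low-frequency piece, the tensor-product scaling gives $\|\check\Psi_R\|_{L^{r'}(\mathbb R^N)}\leq CR^{N/r}$, so Young's inequality yields
\begin{equation*}
\|F_R\|_\infty\leq CR^{N/r}\|F\|_r\leq CR^{N/r}r^\lambda M,\qquad M:=\max\Bigl\{1,\sup_{r\geq 2}\|F\|_r/r^\lambda\Bigr\},
\end{equation*}
for every $r\geq 2$. Choosing $r=\max\{2,N\log R\}$ balances $R^{N/r}\leq e$, so
\begin{equation*}
\|F_R\|_\infty\leq C_{N,\lambda}M\,(\log R+e)^\lambda.
\end{equation*}

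For the high-frequency piece I would expand, by inclusion-exclusion,
\begin{equation*}
1-\Psi_R(\xi)=\sum_{\emptyset\neq S\subseteq\{1,\ldots,N\}}(-1)^{|S|+1}\prod_{i\in S}\bigl(1-\chi(\xi_i/R)\bigr),
\end{equation*}
producing finitely many pieces $G_S$ with $\widehat{G_S}$ supported in $\bigcap_{i\in S}\{|\xi_i|\geq R\}$. For any $i_0\in S$ one may write $1-\chi(\xi_{i_0}/R)=R^{-1}m_{i_0}(\xi_{i_0})\cdot(i\xi_{i_0})$, where $m_{i_0}(\eta)=R(1-\chi(\eta/R))/(i\eta)$ is a one-dimensional Fourier multiplier uniformly bounded on $L^q(\mathbb R)$ for $q\in(1,\infty)$; together with the uniform $L^q$-boundedness of the remaining multipliers $1-\chi(\xi_i/R)$, this yields
\begin{equation*}
\|G_S\|_q\leq CR^{-1}\|\partial_{i_0}F\|_q,\qquad q\in(1,\infty),\ i_0\in S.
\end{equation*}
Choosing for each coordinate the most favourable $i_0\in S$ and combining with the anisotropic Sobolev embedding $W^{1,\mathbf p}(\mathbb R^N)\hookrightarrow L^\infty(\mathbb R^N)$ (available because $\sum 1/p_i<1$), possibly via iterated one-dimensional Sobolev in the coordinates $j\notin S$, leads to
\begin{equation*}
\|F-F_R\|_\infty\leq CR^{-\epsilon}W,\qquad W:=\sum_{i=1}^N(\|F\|_{p_i}+\|\partial_iF\|_{p_i})+e,
\end{equation*}
for some $\epsilon=\epsilon(N,\mathbf p)>0$ depending on the gap $1-\sum 1/p_i$. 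Setting $R=W^{1/\epsilon}$ equalizes the two pieces and gives
\begin{equation*}
\|F\|_\infty\leq C_{N,\mathbf p,\lambda}M\bigl(\log(W+e)\bigr)^\lambda+C,
\end{equation*}
which, using $M\geq 1$, implies the claim.

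The main obstacle is the high-frequency bound: the multiplier identity produces the full $R^{-1}$ gain in the ``diagonal'' directions $i_0\in S$ (where the available norm is $\|\partial_{i_0}F\|_{p_{i_0}}$), but in the directions $j\notin S$ no direct gain is at hand, since one has no control on $\|\partial_{i_0}F\|_{p_j}$ for $j\neq i_0$. Extracting a genuine power decay $R^{-\epsilon}$ therefore requires a careful iterated one-dimensional Sobolev argument in the coordinates $j\notin S$, making essential use of the fact that the localizing multipliers $\chi(\xi_i/R)$ and $1-\chi(\xi_i/R)$ commute and are uniformly bounded on every $L^{p_j}$. The hypothesis $\sum 1/p_i<1$ is precisely what provides the slack needed to keep $\epsilon$ strictly positive.
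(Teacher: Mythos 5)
Your low-frequency estimate and the final optimization in $R$ are fine, but the high-frequency bound $\|F-F_R\|_\infty\leq CR^{-\epsilon}W$ is exactly where the argument stops being a proof, and you concede as much in your closing paragraph. The multiplier identity gives $\|G_S\|_{p_i}\leq CR^{-1}\|\partial_iF\|_{p_i}$ only for $i\in S$, while $\|\partial_iG_S\|_{p_i}\leq C\|\partial_iF\|_{p_i}$ carries no factor of $R^{-1}$ and $\|G_S\|_{p_j}\leq C\|F\|_{p_j}$ for $j\notin S$ carries none either. Feeding these into the additive anisotropic embedding $\|G\|_\infty\lesssim\sum_i\bigl(\|G\|_{p_i}+\|\partial_iG\|_{p_i}\bigr)$ therefore yields no negative power of $R$ at all: the gain is destroyed by the gradient terms. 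What you actually need is a multiplicative (Gagliardo--Nirenberg type) form of the anisotropic embedding, $\|G\|_\infty\lesssim\prod_i\|G\|_{p_i}^{\sigma_i}\prod_i\|\partial_iG\|_{p_i}^{\tau_i}$, in which the undifferentiated norms indexed by $S$ carry a strictly positive total exponent, uniformly over all $2^N-1$ sets $S$ --- in the worst case $S=\{i_0\}$, where the only gained factor is $\|G_S\|_{p_{i_0}}$. Such an interpolation inequality is neither quoted nor proved; ``iterated one-dimensional Sobolev in the coordinates $j\notin S$'' is a plan, not an argument, and whether the exponents can be arranged under the sole hypothesis $\sum_i 1/p_i<1$ is precisely the quantitative content that is missing. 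In its absence the asserted decay $R^{-\epsilon}$, and hence the whole splitting strategy, is unsupported; the missing inequality is of essentially the same depth as the lemma itself.

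For comparison, the paper's proof never enters frequency space: after the anisotropic change of variables $y_i=|x_i|^{\alpha_i-1}x_i$ with $\alpha_i=1/p_i$, it represents $|f|^q$ at a point by the Newtonian potential and integrates by parts, obtaining $|f(0)|^q\leq C\,q\sum_{i=1}^N\|f\|_{(q-1)\kappa_i}^{q-1}\|\partial_if\|_{p_i}$, where the condition $\sum_i1/p_i<1$ enters only through the integrability of the resulting kernel; the logarithm then comes from optimizing over $q\geq3$, exactly as your optimization in $R$ was meant to do. If you wish to salvage the Fourier route, you must first establish the multiplicative anisotropic embedding described above (with explicit exponents and with the flexibility to place the positive exponent on a prescribed single $L^{p_{i_0}}$ norm); otherwise the potential-theoretic representation is the shorter path.
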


\begin{proof}
We only give the detail proof for the case that the spatial dimension $N\geq3$, the case that $N=2$ can be given similarly. Without loss of generality, we can suppose that
$|F(0)|=\|F\|_\infty$. Let $\phi\in C_0^\infty(B_1)$, with $\phi\equiv1$ on $B_{1/2}$, and set $f=F\phi$. Set
$$
\alpha_i=\frac{1}{p_i}\in(0,1),\quad i=1,2,\cdots,N,
$$
and introduce the new variable $y=(y_1,\cdots,y_N)$, with
$$
y_i=|x_i|^{\alpha_i-1}x_i,\quad i=1,2,\cdots,N.
$$

Taking $\mu_i$ and $\kappa_i$ as
\begin{eqnarray*}
&&\mu_i =\frac{1+\sum_{j=1}^N\alpha_j}{1+\sum_{j=1}^N\alpha_j-2\alpha_i},\qquad\kappa_i =\frac{p_i\left(1+\sum_{j=1}^N\alpha_j\right)}{1-\sum_{j=1}^N\alpha_j},
\end{eqnarray*}
then it is obvious that $\mu_i>1$ and $\kappa_i>p_i$, and one can easily check that
\begin{equation}
  \label{mukappa}
  \frac{1}{\mu_i}+\frac{1}{\kappa_i}+\frac{1}{p_i}=1.
\end{equation}
Setting $\alpha=\sum_{j=1}^N\alpha_j$, then we have
\begin{align*}
  &\left(\sum_{j=1}^N\alpha_j-\alpha_i\right)\mu_i-\sum_{j=1}^N\alpha_j
  = \left( \alpha -\alpha_i\right)\frac{1+\alpha}{1+ \alpha -2\alpha_i}-\alpha
  = \frac{\alpha_i(\alpha-1)}{1+\alpha-2\alpha_i},
\end{align*}
from which, noticing that
$$
\alpha=\sum_{j=1}^N\alpha_j=\sum_{j=1}^N\frac{1}{p_j}<1,\quad \mbox{and}\quad 1+\alpha-2\alpha_i>1-\alpha_i>0,
$$
we have
\begin{equation}\label{mukappa1}
  N+\left(\sum_{j=1}^N\alpha_j-\alpha_i\right)\mu_i-\sum_{j=1}^N\alpha_j<N.
\end{equation}

Set $K_1=\{(y_1,\cdots,y_N)|y_i=|x_i|^{\alpha_i-1}x_i, x\in B_1\}$.
Recall that $f$ can be represented in terms of $\Delta f$ by the Newtonian potential. Recalling (\ref{mukappa}) and (\ref{mukappa1}), it follows from integration by parts and the H\"older inequality that, for any $q\geq3$,
\begin{align}
  |f(0)|^q=&C_N\left|\int_{\mathbb R^N}\frac{1}{|x|^{N-2}}\Delta\left(|f|^q (|x_1|^{\alpha_1-1}x_1,\cdots,|x_N|^{\alpha_N-1}x_N)\right)
  dx\right|\nonumber\\
  =&C_N\left|\int_{B_1}\frac{1}{|x|^{N-2}}\Delta \left(|f|^q(|x_1|^{\alpha_1-1}x_1,\cdots,|x_N|^{\alpha_N-1}x_N)\right)
  dx\right|\nonumber\\
  =&C_N\left|\int_{B_1}\sum_{i=1}^N\partial_{x_i} \left(\frac{1}{|x|^{N-2}}\right)\partial_{x_i}\left(|f|^q(y)\right)
  dx\right|\nonumber\\
  =&(N-2)C_Nq\sum_{i=1}^N\alpha_i\left|\int_{B_1} \frac{|x_i|^{\alpha_i-1}x_i}{|x|^N}|f|^{q-1}(y)|\partial_{y_i}
  f(y)|dx\right|\nonumber\\
  =&(N-2)C_N\left(\Pi_{j=1}^N\alpha_j\right)^{-1}\nonumber\\
  &\times q\sum_{i=1}^N\alpha_i
  \left|\int_{K_1}\frac{|x_i|^{\alpha_i-1}x_i}{|x|^N} \Pi_{j=1}^N|x_j|^{1-\alpha_j}|f|^{q-1}(y)|\partial_{y_i}
  f(y)|dy\right|\nonumber\\
  \leq&C_{N,\textbf{p}}q\sum_{i=1}^N\|f\|_{(q-1)\kappa_i}^{q-1}\|\partial_if\|_{p_i}
  \left[\int_{K_1}\left(\frac{|x_i|^{\alpha_i}}{|x|^N}\Pi_{j=1}^N|x_j|^{1-\alpha_j}\right)^{\mu_i}dy
  \right]^{\frac{1}{\mu_i}}\nonumber\\
  \leq&C_{N,\textbf{p}}q\sum_{i=1}^N\|f\|_{(q-1)\kappa_i}^{q-1}\|\partial_if\|_{p_i}
  \left[\int_{B_1}\left(\frac{|x_i|^{\alpha_i}}{|x|^N}\right)^{\mu_i}
  \left(\Pi_{j=1}^N|x_j|^{1-\alpha_j}\right)^{\mu_i-1}dx
  \right]^{\frac{1}{\mu_i}}\nonumber\\
  \leq&C_{N,\textbf{p}}q\sum_{i=1}^N\|f\|_{(q-1)\kappa_i}^{q-1}\|\partial_if\|_{p_i}
  \left(\int_{B_1}\frac{dx} {|x|^{N+\left(\sum_{j=1}^N\alpha_j-\alpha_i\right)\mu_i-\sum_{j=1}^N
  \alpha_j}}\right)^{\frac{1}{\mu_i}}\nonumber\\
  \leq&C_{N,\textbf{p}}q\sum_{i=1}^N \|f\|_{(q-1)\kappa_i}^{q-1}\|\partial_if\|_{p_i}.\label{new}
\end{align}

With the aid of (\ref{new}), for any $q\geq3$, noticing that $q^{\frac{1}{q}}\leq C$ and $(q-1)\kappa_i\geq2$, we deduce
\begin{align*}
  |f(0)|\leq&C_{N,\textbf{p}}\sum_{j=1}^N\|f\|_{(q-1)\kappa_i}^{1-\frac{1}{q}}\|\partial_i
  f\|_{p_i}^{\frac{1}{q}}\\
  =&C_{N,\textbf{p}}\sum_{i=1}^N\left[\frac{\|f\|_{(q-1)\kappa_i}}{((q-1)\kappa_i)^\lambda}
  \right]^{1-\frac{1}{q}}[(q-1)\kappa_i]^{\lambda\left(1-\frac{1}{q}\right)}\|\partial_i
  f\|_{p_i}^{\frac{1}{q}}\\
  \leq&C_{N,\textbf{p},\lambda}\sum_{i=1}^N\left[\frac{\|f\|_{(q-1)\kappa_i}}{((q-1)\kappa_i)
  ^\lambda}
  \right]^{1-\frac{1}{q}}q^{\lambda}\|\partial_i
  f\|_{p_i}^{\frac{1}{q}}\\
  \leq&C_{N,\textbf{p},\lambda}\max\left\{1,\sup_{r\geq2}\frac{\|f\|_r}{r^\lambda}\right\}q^\lambda\sum
  _{i=1}^N\|\partial_i
  f\|_{p_i}^{\frac{1}{q}},
\end{align*}
and thus
\begin{equation*}
  |f(0)|\leq C_{N,\textbf{p},\lambda}\max\left\{1,\sup_{r\geq2}\frac{\|f\|_r}{r^\lambda}\right\}\sum
  _{i=1}^N\inf_{q\geq3}
  \left(q^\lambda(\|\partial_i
  f\|_{p_i}+e^{4\lambda})^{\frac{1}{q}}\right).
\end{equation*}

One can check that
$$
\log(\|\partial_if\|_{p_i}+e^{4\lambda})\leq\max\{1,4\lambda\}\log(\|\partial_if\|_{p_i}+e)
$$
and
$$
\inf_{q\geq3}
  \left(q^\lambda(\|\partial_i
  f\|_{p_i}+e^{4\lambda})^{\frac{1}{q}}\right)=\left(\frac{e}{\lambda}\right)^\lambda\log^\lambda
  (\|\partial_if\|_{p_i}+e^{4\lambda}).
$$
Therefore, we have
\begin{equation*}
  |f(0)|\leq C_{N,\textbf{p},\lambda}\max\left\{1,\sup_{r\geq2}\frac{\|f\|_r}{r^\lambda}\right\}\sum
  _{i=1}^N\log^\lambda
  (\|\partial_if\|_{p_i}+e).
\end{equation*}
This implies
\begin{align*}
\|F\|_\infty=&|f(0)|\leq C_{N,\textbf{p},\lambda}\max\left\{1,\sup_{r\geq2}\frac{\|f\|_r}{r^\lambda}\right\}\sum
  _{i=1}^N\log^\lambda
  (\|\partial_if\|_{p_i}+e)\\
  \leq&C_{N,\textbf{p},\lambda}\max\left\{1,\sup_{r\geq2}\frac{\|F\|_r}{r^\lambda}\right\}\sum
  _{i=1}^N\log^\lambda
  (\|\partial_iF\|_{p_i}+\|F\|_{p_i}+e)\\
  \leq&C_{N,\textbf{p},\lambda}\max\left\{1,\sup_{r\geq2}\frac{\|F\|_r}{r^\lambda}\right\}
  \log^\lambda
  (\|F\|_{W^{1,\textbf{p}}}+e),
\end{align*}
proving the conclusion.
\end{proof}

\section*{Acknowledgments}
{This work was supported in part by the ONR grant N00014-15-1-2333 and the NSF grants DMS-1109640 and DMS-1109645.}
\par
%\section*{Acknowledgments}
%{The author would like to express great gratitude to }
%\par

\end{document}